\documentclass[12pt]{article}
\usepackage[margin=1.9cm]{geometry}
\usepackage{float}

\usepackage[pagebackref, hidelinks]{hyperref}
\hypersetup{
colorlinks=true, 
citecolor = blue, 
linkcolor = blue, 
urlcolor = blue, 
anchorcolor = blue
}

\usepackage{amsthm}
\usepackage{amssymb}
\usepackage{tikz}

\usepackage{mathtools}

\usepackage{enumitem}
\usepackage{scalefnt}

\usepackage{paratex/commands}

\title{The complexity of pinning simple multiloops}

\author{Eric Seo, Christopher-Lloyd Simon, Ben Stucky}
\date{\today}

\begin{document}

\maketitle

\begin{abstract}

A multiloop with $s\in \N$ strands is a generic immersion \(\gamma\colon \sqcup_1^s \mathbb{S}^1 \looparrowright \Sigma\) of the union of $s$ circles into a surface $\Sigma$, considered up to homeomorphisms. 
A pinning set of $\gamma$ is a set of points \(P\subset \Sigma\setminus \operatorname{im}(\gamma)\), such that in the punctured surface \(\Sigma \setminus P\), the immersion \(\gamma\) has the minimal number of double points in its homotopy class.
Its pinning number \(\varpi(\gamma)\) is the minimum cardinal of its pinning sets.

In any fixed orientable surface $\Sigma$, the pinning problem which given a multiloop $\gamma$ and $k\in \N$ decides whether $\varpi(\gamma)\le k$ has been show to be \textsf{NP}-complete, even in restrictions to loops (with $s=1$ strand).
In this work we study the complexity of the pinning problem in restriction to multiloops whose strands are \emph{simple} (embedded circles).

We show that in any fixed oriented surface $\Sigma$, the problem is in \textsf{P} when $s\leq 3$ and \textsf{NP}-complete when $s\geq 20$, and present some follow-up questions and conjectures.
\end{abstract}

\begin{figure}[H]
\centering
\vspace{-2.9cm}
\rotatebox{-45}{\scalebox{0.5}{\input{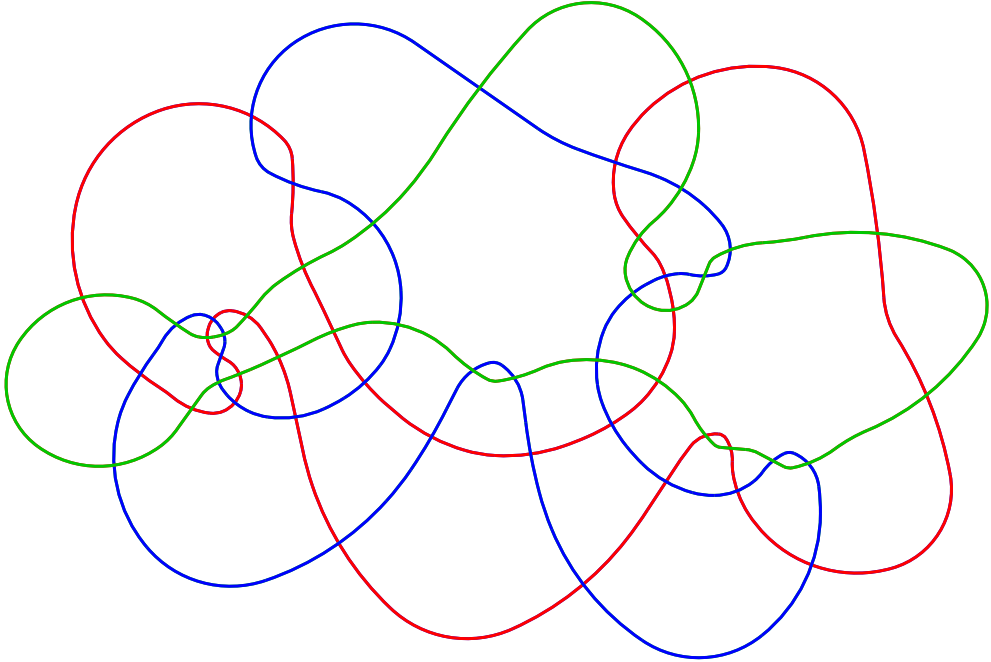}}}
\vspace{-2.9cm}
\caption{A planar simple multiloop with $3$ strands, drawn with a tool created by Ryan Pham and the second and third author using a circle-packing algorithm from PADS \cite{collins_stephenson_2003,eppsteinPADS,Pham_Computing_mobidisc_formulae_2026}.}
\label{fig:planar_multiloop}
\end{figure}

\newpage

\renewcommand{\contentsname}{Plan of the paper}
\setcounter{tocdepth}{2}
\tableofcontents

\section{Introduction}

\subsection{Key definitions and previous work}

\begin{definition}[multiloop, simple multiloop, pinning set, pinning number]

A \emph{multiloop} with $s\in \N$ \emph{strands} in a smooth surface $\Sigma$ is a generic immersion \(\gamma\colon \sqcup_1^s \mathbb{S}^1 \looparrowright \Sigma\) of the union of $s$ circles into $\Sigma$, considered up to diffeomorphisms of the source and the target.

In particular, it has a finite number of multiple points and those are all transverse double-points: we denote that \emph{number of double-points} by $\#\gamma$. A multiloop is called \emph{simple} when each of its strands is embedded in $\Sigma$.

The connected components of $\Sigma\setminus\operatorname{im}(\gamma)$ will be called \emph{regions} of $\gamma$ and we denote by $\mathcal{R}(\gamma)$ the set of regions.
For a subset of regions $P\subset \mathcal{R}(\gamma)$, we define the \emph{self-intersection number} $\si_P(\gamma)$ of $\gamma$ in $\Sigma\setminus P$ as the minimal number of double-points of multiloops in $\Sigma\setminus P$ homotopic to $\gamma$.

We say that $P$ is a \emph{pinning set} for $\gamma$ when $\#\gamma = \si_P(\gamma)$, namely when $\gamma$ is \emph{taut} in $\Sigma \setminus P$. 
The collection of pinning sets of \(\gamma\) forms the \emph{pinning ideal} \(\mathcal{PI}(\gamma)\), that is a poset under inclusion which is absorbing under union.
Among the \emph{minimal} pinning sets, some are \emph{optimal} in the sense that they minimize the cardinal function: this minimum cardinal is called the \emph{pinning number} \(\varpi(\gamma)\). 
\end{definition}

\begin{remark}[no orientations]
    By definition, a multiloop has non-oriented strands and its ambient surface is not endowed with a preferred orientation (even when it is orientable), since the diffeomorphisms may reverse any of these orientations.
\end{remark}

\begin{example}[empirical structure of pinning ideals]
    A multiloop may have minimal pinning sets of different cardinalities as demonstrated in Figure \ref{fig:simple_pinningideal}. The second and third author used the results of \cite{simon_stucky_2025pinningidealmultiloop} and some functionalities from \texttt{plantri} \cite{plantri} and \texttt{SnapPy} \cite{SnapPy} to enumerate the smallest multiloops in $\mathbb{S}^2$ and compute and plot their pinning ideals. In particular we did this for \href{https://christopherlloyd.github.io/LooPindex/simple.html}{the smallest simple multiloops}. The full results are available in the \href{https://christopherlloyd.github.io/LooPindex/}{LooPindex online catalog} \cite{Simon-Stucky_LooPindex_2024}. 
\end{example}

\begin{figure}[H]
    \centering
    \scalebox{0.5}{\input{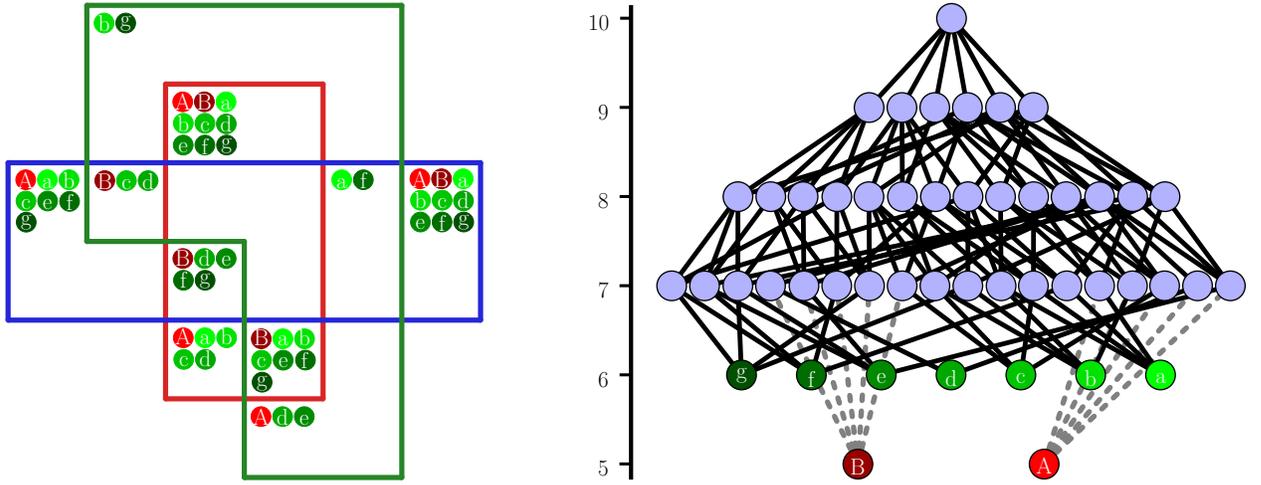}}
    \caption{Left: The simple multiloop \href{https://christopherlloyd.github.io/LooPindex/multiloops/12\%5E3_56.html}{$12^3_{56}$} has number pinning number $5$, two optimal pinning sets (denoted by capital letters), and seven other minimal pinning sets of size $6$ (denoted by lowercase letters). Right: The part of its pinning ideal generated by unions of minimal pinning sets.}
    \label{fig:simple_pinningideal}
\end{figure}

\begin{definition}[pinning problem]
    Fix a surface $\Sigma$; $s\in \N$.
    The \emph{pinning problem} $\mathsf{Pin}(\Sigma, s)$ has

    \begin{itemize}[align=left, noitemsep]
    \item[Instance:] A combinatorial encoding of a multiloop $\gamma\colon \sqcup_{i=1}^s \Sph^1\looparrowright \Sigma$ (see below) and $k\in \Z$.
    \item[Question:] Is  $\varpi(\gamma)\le k$?
\end{itemize}
\end{definition}

\begin{remark}[filling multiloops]
A multiloop $\gamma$ is \emph{filling} when all of its regions $\mathcal{R}(\gamma)$ have genus zero (namely are homeomorphic to disks with a certain number of punctures). It is enough to restrict to multiloops that are filling when studying the complexity of $\mathsf{Pin}(\Sigma, s)$ because any region with positive genus may be replaced by a disk with a puncture.
\end{remark}

\begin{definition}[combinatorial encodings of multiloops]
Our complexity results depend on the existence of combinatorial encodings of multiloops in surfaces. 
There are several ways to do this which lead to equivalent notions of complexity.
For filling multiloops, one may do this using maps and their encodings as permutations as in \cite{simon_stucky_2025pinningidealmultiloop}. 
The size of a $\mathsf{Pin}(\Sigma, s)$ problem instance $(\gamma,k)$ in memory (that is, the size of the input) is then proportional to $\#\gamma+\abs{k}$.

\end{definition}

The second and third author show in \cite{simon_stucky_2025pinningidealmultiloop} that for every fixed orientable $\Sigma$ and $s\geq 1$, the pinning problem $\mathsf{Pin}(\Sigma, s)$ is \textsf{NP}-complete, by a reduction from the graph-theoretic decision problem \textsf{Planar Vertex Cover} (see Definition \ref{def:vc-prob}).

\subsection{Summary of results}

In this work we consider the restriction of the pinning problem to the class of simple multiloops, and will show that its complexity exhibits a ``phase transition'' as the number of strands increases.

\begin{definition}[$\mathsf{SimplePin}$]
For a surface $\Sigma$ and $s\in \N$, the decision problem $\mathsf{SimplePin}(\Sigma,s)$ has 
\begin{itemize}[align=left, noitemsep]
    \item[Instance:] A simple multiloop $\gamma\colon \sqcup_{i=1}^s \Sph^1\looparrowright \Sigma$ and $k\in \Z$.
    \item[Question:] Is  $\varpi(\gamma)\le k$?
\end{itemize}
Recalling the formal language definition, we also let $\mathsf{SimplePin}(\Sigma)=\cup_{s=1}^\infty\mathsf{SimplePin}(\Sigma,s)$.
\end{definition}

A simple loop is trivially pinned by the empty set.
For simple multiloops with $2$ strands, it is an exercise (as we will see) to show that there is a unique minimal pinning set (hence a unique optimal pinning set) and it is computable in polynomial time, so the problem is in \textsf{P}.
Our first main result is that the pinning problem remains easy for simple multiloops with $s\le 3$ strands.

\begin{theorem}[in \textsf{P} for $\le 3$ strands]
\label{thm:3-easy}
For a fixed orientable surface $\Sigma$ and $s\leq 3$, the problem $\mathsf{SimplePin}(\Sigma,s)$ is in \textsf{P}.
\end{theorem}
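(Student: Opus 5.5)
The strategy is to reduce pinning a simple multiloop to a covering problem determined by its non-taut configurations, and then show that with at most three strands this covering problem has a structure that polynomial time can handle. By the characterization of tautness in \cite{simon_stucky_2025pinningidealmultiloop}, a multiloop fails to be taut in $\Sigma\setminus P$ exactly when $\Sigma\setminus P$ contains an embedded \emph{monogon} or \emph{bigon} bounded by arcs of $\gamma$. Since each strand is simple, no monogon occurs. So the obstructions are bigons whose two sides lie on two different strands $\gamma_i,\gamma_j$ and which meet only at their two corners.

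Hence $P$ is a pinning set if and only if $P$ meets, as a set of regions, every \emph{innermost-type} bigon. Here a bigon is viewed as a disk $D\subset\Sigma$ bounded by an arc of $\gamma_i$ and an arc of $\gamma_j$, and it is pinned when some region inside $D$ lies in $P$. Minimal bigons suffice: if a bigon contains a smaller bigon of the same pair or of another pair, hitting the smaller one hits the larger. I would first enumerate all bigons in polynomial time. For each pair $(i,j)$, the bigons correspond to pairs of consecutive intersection points along both arcs bounding a disk. Each is determined by two double points together with a choice of arcs, so there are $O(\#\gamma^2)$ candidates, and testing whether a subsurface is a disk and listing its regions is polynomial. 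Hitting-set for general families is hard, so the key is structural.

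\textbf{Case $s=2$.} Two bigons of the pair $(1,2)$ are either nested or have disjoint interiors, because their sides are arcs of two simple curves. So the minimal ones are pairwise interior-disjoint. Each must contain a pinned region, so any pinning set needs at least as many regions as there are minimal bigons. Conversely, I would check that for $s=2$ a minimal bigon is a single region: any arc of $\gamma_1$ or $\gamma_2$ entering it would create a smaller bigon. The unique minimal pinning set is then the set of these regions.

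\textbf{Case $s=3$, the main obstacle.} Here the minimal bigons $\mathcal{B}$ can contain arcs of the third strand, so a bigon of pair $(i,j)$ is a union of regions cut by $\gamma_k$, and bigons of different pairs may overlap. The aim is to show that the hypergraph $(\mathcal R(\gamma),\mathcal{B})$ admits an efficient hitting-set algorithm, which I would do in two steps.
\begin{enumerate}
\item Within a fixed pair $(i,j)$, show that minimal bigons are pairwise interior-disjoint, just as for $s=2$.
\item Inside a minimal $(i,j)$-bigon $D$, the strand $\gamma_k$ crosses $D$ in arcs. Since $D$ is a minimal bigon of its own pair, each such arc runs from the $\gamma_i$-side to the $\gamma_j$-side; an arc with both ends on the same side would bound a smaller bigon with $\gamma_k$. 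These arcs therefore cut $D$ into a linear sequence of regions $R_0,\dots,R_m$. Of these, $R_0$ and $R_m$ are triangles through the corners and the rest are quadrilaterals. Two consecutive crossing arcs, together with the sides between them, may form no bigon, but $R_0$ together with the first arc forms a triangle. An $(i,k)$- or $(j,k)$-bigon can meet $D$ only in such end pieces.
\end{enumerate}
I expect this to show that every region lies in at most two minimal bigons of different pairs and that overlaps occur at corner regions. This would make the problem an edge-cover problem on an auxiliary graph: vertices are the minimal bigons, and edges are regions lying in two of them. Bigons covered by no shared region contribute one each. Minimum edge cover is computable in polynomial time via maximum matching (Gallai, Edmonds), giving $\varpi(\gamma)$.

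Verifying that no region lies in three minimal bigons, one of each pair, is the delicate point. If such triple regions exist, I would instead prove that they form a structure where each bigon contains at most one, and handle them by a matching argument with hyperedges of size $3$ restricted to a laminar configuration. Finally, the algorithm compares the computed $\varpi(\gamma)$ with $k$.
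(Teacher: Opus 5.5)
\textbf{There is a genuine gap in the $s=3$ case.} Your first steps agree with the paper. For simple multiloops, tautness is detected by innermost embedded bigons. A non-regional innermost $(i,j)$-bigon is a chain $R_0,\dots,R_m$ cut by arcs of $\gamma_k$. Its quadrilaterals lie in no other innermost bigon.

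The problem is the endgame. Your expectation that every region lies in at most two innermost bigons is false. Take three round circles in Venn position in $\mathbb{R}^2$, and write $r_S$ for the bounded region lying in exactly the disks indexed by $S$. The innermost bigons are then exactly the nine pairs $\{r_S,r_{S'}\}$ with $S,S'$ differing in one element: lenses such as $\{r_{12},r_{123}\}$ and crescents such as $\{r_1,r_{12}\}$. These form a cube graph minus a vertex, so $r_{123}$ and every $r_{ij}$ lie in three innermost bigons. Your fallback fails here too, since the lens $\{r_{12},r_{123}\}$ contains two such triple regions.

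More fundamentally, no local overlap count can suffice. Move any pin from a quadrilateral to an end triangle, and place the forced pins in the regional bigons. What remains is exactly \textsf{Vertex Cover} on the graph $G$ whose vertices are triangular regions and whose edges are the non-regional innermost bigons. This graph has maximum degree $3$, and \textsf{Vertex Cover} is \textsf{NP}-hard on such graphs in general, even on cubic planar ones (the paper relies on this for its hardness result). Your dual edge-cover formulation is the same problem with $3$-element hyperedges.

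The missing idea is a global parity structure: $G$ is bipartite. This is where the orientation of $\Sigma$ enters, and your argument never uses it. Color the strands $r,g,b$. Label each edge of $G$ by the two colors bounding its bigon, and orient it so that the first color of the label lies on the left of a traveller moving along it. A triangle has one side of each color. Each incident bigon has its marked point at a corner of the triangle and leaves across the opposite side. By cyclic symmetry, all incident edges are outgoing if the colors appear in one cyclic order around the triangle, and all are incoming for the other order. Sources and sinks therefore give the bipartition. A minimum vertex cover of $G$ is then obtained in polynomial time from a maximum matching via K\H{o}nig's theorem, and adding the number of regional bigons yields $\varpi(\gamma)$.

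Separately, embedded monorbigons do not characterize tautness for general multiloops; the paper reproduces a Hass--Scott counterexample. What you need is Hass--Scott's Lemma~3.1 for simple multiloops.
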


\begin{proof}[Proof outline]
This is proved in Section \ref{subsec:3easy} by reducing the problem of bounding $\varpi(\gamma)$ for $\gamma\colon \sqcup_{i=1}^3 \Sph^1\looparrowright \Sigma$ to the \textsf{Vertex Cover} problem for bipartite graphs, which is in \textsf{P} because of its relation to matchings (via K\fH{o}nig's theorem and Dinitz' algorithm \cite{dinitz_1970,edmonds_karp_72,bondy1976graph}).
\end{proof}

In contrast our next main theorem implies, under the assumption \textsf{P} $\neq$ \textsf{NP}, that for any fixed orientable surface $\Sigma$ and integer $s\geq 20$, there is no polynomial algorithm which takes as input a simple multiloop $\gamma\colon \sqcup_{i=1}^s \Sph^1\looparrowright \Sigma$ and outputs $\varpi(\gamma)$.

\begin{theorem}[\textsf{NP}-complete for many strands]
\label{thm:20-hard}
For a fixed orientable surface $\Sigma$ and $s\geq 20$, the problem $\mathsf{SimplePin}(\Sigma,s)$ is \textsf{NP}-complete.
\end{theorem}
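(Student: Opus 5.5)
The proof has two parts: membership in \textsf{NP} and \textsf{NP}-hardness via a polynomial reduction from \textsf{Planar Vertex Cover} (Definition \ref{def:vc-prob}). For this problem it is more convenient to use the version restricted to planar graphs of maximum degree $3$, which is still \textsf{NP}-complete.

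\emph{Membership.} A certificate is a set $P\subset\mathcal{R}(\gamma)$ with $|P|\le k$. To verify it, we must decide in polynomial time whether $\gamma$ is taut in $\Sigma\setminus P$. For simple multiloops this is easy. Since each strand is embedded, every double point is an intersection between two distinct strands. By the bigon criterion for pairs of simple closed curves, together with its multi-curve extension, $\gamma$ is taut exactly when no strand bounds a disk in $\Sigma\setminus P$ containing double points, and no two strands cobound an embedded bigon in $\Sigma\setminus P$. Here a bigon may contain other arcs but no puncture. Both conditions can be checked on the combinatorial map by enumerating pairs of double points and checking whether the arcs joining them on the two strands bound a disk avoiding $P$. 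Alternatively, we can invoke the general polynomial tautness test used in \cite{simon_stucky_2025pinningidealmultiloop}.

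\emph{Hardness.} Given a planar graph $G$ of maximum degree $3$, we first fix a planar embedding. We also fix a proper edge colouring of $G$, and, more importantly, a bounded colouring of the \emph{vertices and edges} that we will use to assign strands.

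The idea is to realise each edge $e=uv$ of $G$ by a gadget: a bigon, or a small chain of bigons, formed by two strands. Its only pinning choices are regions $R_u$ or $R_v$, which are the "vertex regions" placed at $u$ and $v$. The gadget is arranged so that any pinning set must hit $R_u$ or $R_v$, or else pay extra regions inside the gadget. This is exactly the covering constraint. Auxiliary bigons that are forced must then be pinned independently of the vertex choices, contributing a fixed additive constant $c$. The instance $(\gamma,k+c)$ is then equivalent to $G$ having a vertex cover of size $\le k$.

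The global constraint is that each strand is embedded, so a single strand cannot weave through the whole graph as in the loop reduction. Instead, each strand will be a simple closed curve made of many "parallel" arcs which are disjoint from one another. The strands are assigned by colour classes: for instance, two strands per colour in a colouring of a suitable auxiliary graph where conflicting local pieces receive different colours. Since the degree and planarity are bounded, a constant number of colours suffices. This is why a fixed count like $20$ strands arises. For instance, a $4$-colouring of vertices combined with a colouring of edges of the medial or square graph, with multiplicities, gives the total.

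Any larger $s$ is handled by adding trivial extra strands (small disjoint circles crossing nothing, or parallel copies). Their contribution to $\varpi$ is an explicit constant that we absorb into $k$. Genus is handled by placing the construction in a disk of $\Sigma$.

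\emph{Main obstacle.} The hardest step is the correctness of the gadgets: showing that a minimum pinning set can be assumed to consist of vertex regions plus forced auxiliary regions. This requires controlling all bigons and monogon-type disks, including large immersed bigons spanning several gadgets, via the bigon criterion. I would first try an exchange argument: any pinning point inside a gadget can be moved to an adjacent vertex region without losing tautness. I would also design the gadgets so that every innermost bigon is local to a single gadget, by separating gadgets with strands that prevent long bigons.
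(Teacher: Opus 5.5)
Your membership argument is fine and matches the paper's (Corollary \ref{cor:pinning-simple-easy}, via the Hass--Scott embedded-bigon lemma). Your local gadget idea is also the paper's: encase each edge in a thin bigon between two strands, pin the vertex region where the bigons of incident edges overlap, and pay a constant for forced auxiliary bigons.

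The hardness part, however, has a genuine gap. It sits exactly at the step that makes this theorem harder than Theorem \ref{thm:simple-no-strand-restiction-hard}: merging the linearly many local pieces into a \emph{bounded} number ($\le 20$) of \emph{embedded} strands without changing the innermost-bigon structure.
\begin{itemize}
\item A colouring of an auxiliary graph only makes same-coloured pieces disjoint locally. Each colour class is still a scattered family of arcs. To close it into one simple curve you must add connecting arcs, which in general cross pieces of the other classes.
\item Those crossings create new double points and new embedded bigons, either between a connecting arc and a gadget arc or between connecting arcs of different colours.
\item Such bigons may be innermost and may contain vertex regions. They then add clauses to $\Phi(\gamma)$ whose optimal pinning is no longer independent of the vertex cover.
\item Your remedy of ``separating strands'' just moves the same problem to those strands, which must also be simple and bounded in number.
\item You never derive the count $20$. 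An unspecified number of colours would give hardness only for $s\ge C$ with unknown $C$, not the stated $s\ge 20$.
\end{itemize}

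The missing ingredient is geometric control of this routing. The paper reduces from \textsf{3C3PVC} rather than max-degree-$3$ \textsf{PVC} precisely in order to use the Dujmovi\'c--Eppstein--Suderman--Wood drawing (Lemma \ref{lem:embed_with_4_slopes}). In that drawing all but three edges have slopes in $\{\frac{\pi}{4},\frac{\pi}{2},\frac{3\pi}{4}\}$. The construction then proceeds as follows.
\begin{itemize}
\item All gadget paths of one slope are nearly parallel, and alternate between $\alpha$ and $\beta$ types along each line.
\item They are extended beyond the unit circle and joined by concentric connecting arcs at nested radii. Parity balancing makes the pairings close up into exactly $4$ strands per slope.
\item The bigons this routing could create are ruled out by three separate controls. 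The staggering Lemma \ref{lem:stagger} excludes bigons among connecting arcs. The anchor strands $\mu,\nu$ with regional teeth exclude innermost bigons crossing the unit circle. The angle condition makes paths of distinct slopes cross exactly once inside $\mathbb{B}(0,\frac12)$.
\item Together these give Lemma \ref{lem:bigons_within_plumbing_subloop} and the complete list of innermost bigons used in the equivalence.
\end{itemize}
This is also where $20=3\cdot 4+3\cdot 2+2$ comes from. Without a comparable mechanism for joining same-colour pieces, your reduction is not established.
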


\begin{proof}[Proof outline]
Following the approach of \cite{simon_stucky_2025pinningidealmultiloop} for loops, this is proved in Section \ref{subsec:20hard} by reducing a restricted but still \textsf{NP}-hard version of \textsf{Vertex Cover} (\textsf{3-Connected Cubic Planar Vertex Cover} or \textsf{3C3PVC}, see Definition \ref{def:vc-prob}) to the problem of bounding $\varpi(\gamma)$ (for $\gamma\colon \sqcup_{i=1}^s \Sph^1\looparrowright \Sigma$ with $s\geq 20$). However, the construction is more delicate than \cite{simon_stucky_2025pinningidealmultiloop} as it requires connecting local strands together to create only $20$ strands globally in such a way that each strand is simple. We will make crucial use of a result of Dujmov\'{i}c, Eppstein, Suderman, and Wood that every cubic $3$-connected plane graph has a plane drawing (computable in polynomial time) in which every edge
has slope in $\{\frac{\pi}{4}, \frac{\pi}{2}, \frac{3\pi}{4}\}$, except for three edges on the outer face \cite{dumovic_et_al_few_slopes_07}. The number $20$ arises because we will need $4$ strands for each slope in $\{\frac{\pi}{4}, \frac{\pi}{2}, \frac{3\pi}{4}\}$, $2$ strands for each of the $3$ additional edges on the outer face, and $2$ additional strands to serve as ``anchors at infinity''. We will say a bit more about the possibility of using fewer strands in Section \ref{sec:conclusion}.
\end{proof}

\begin{remark}[restricting to $\R^2$ to establish hardness]
\label{rmk:plane_enough}
If $\Sigma_1$ is homotopic to a subsurface of an orientable surface $\Sigma_2$, then a hardness result for a pinning problem in $\Sigma_1$ implies the same hardness result for that pinning problem in $\Sigma_2$. So to prove that a pinning problem is \textsf{NP}-hard, it is enough to prove it for $\Sigma=\R^2$.
However, for easyness results (such as proving that a pinning problem is in  \textsf{P} or \textsf{NP}), we must deal with all surfaces.
\end{remark}

\subsection{Further directions of research}

\subsubsection{Hardness threshold}

Theorems \ref{thm:3-easy} and \ref{thm:20-hard} raise the question: For a fixed surface $\Sigma$, what is the largest $s\in \N$ such that $\mathsf{SimplePin}(\Sigma,s)$ is \textsf{P}?
We suspect that the answer to this question is $3$, and provide some rationale for the following conjecture in Section \ref{sec:4-strands}.

\begin{conjecture}[hardness starts at $s=4$]
\label{conj:4-hard}
For a fixed orientable surface $\Sigma$, the problem $\mathsf{SimplePin}(\Sigma,4)$ is \textsf{NP}-complete.
\end{conjecture}

\subsubsection{Non-orientable surfaces}
 
Let $\Sigma$ be a surface which is not necessarily orientable. Our results depend on theorems of Hass and Scott presented in Section \ref{sec:pinning_simple_multiloops_general} which apply only to orientable surfaces.

\begin{conjecture}[non-orientable surfaces]
For a non-orientable surface $\Sigma$ and $s\in \N$, what are the complexities of $\mathsf{Pin}(\Sigma, s)$ and $\mathsf{SimplePin}(\Sigma,s)$?

We conjecture that both are in \textsf{P} when $s\leq 3$ and \textsf{NP}-complete when $s\geq 4$.
\end{conjecture}

\subsubsection{SAT attacks}

The observations in Section \ref{sec:simple-multiloop} show that when $\Sigma$ is orientable there is a polynomial time algorithm which converts a loop or simple multiloop to a positive boolean formula in conjunctive normal form (\textsf{CNF}) called its mobidisc formula $\Phi(\gamma)$  (Definition \ref{def:mobidisc_cnf}), and that the pinning sets of $\gamma$ correspond to the satisfying assignments of $\Phi(\gamma)$. The problem of computing all minimal pinning sets of a multiloop is thus reduced to the problem of \emph{dualizing} $\Phi(\gamma)$ by converting it from a \textsf{CNF} to a logically equivalent positive disjunctive normal form (\textsf{DNF}).
Thus, one may ask whether \textsf{SAT}-solvers suited for such positive \textsf{CNF}s may be used to compute the minimum cardinal of their solutions more efficiently than expected. In practice, the current \textsf{SAT}-solvers for these problems are surprisingly fast (see for instance \cite{Elbassioni_dualizing_hypergraphs_2006,Murakami_Uno_Dualizing_Hypergraphs_2014}).

\begin{question}[SAT-solvers heuristics]
The positive \textsf{CNF}s arising from multiloops are not arbitrary as they come from loops in surfaces: this adds constraints given by (the genus of) the surface. Can one formulate a precise structural result in this direction?
Can one tune a \textsf{SAT}-solver to find heuristics exploiting that surface structure of these motonone \textsf{CNF}s to solve them faster?
\end{question}

\subsubsection{Random models: average parameters and complexity}

\begin{question}[average complexity]
Given a random model of multiloops in surfaces, what is the ``average'' complexity of the pinning problems as measured by the aforementioned \textsf{SAT} solvers?
\end{question}

Let us sketch a few random models of simple multiloops in a fixed or variable surface.

\paragraph{The random state model.}
Our first random model constructs, given a surface $\Sigma$ and integers $p,l\in \N_{\ge1}$, a random simple multiloop of length (in a sense that will be clear) bounded by $l$.
For simplicity assume that $\Sigma$ is orientable and add $p$ punctures to obtain a new surface $\Sigma'$ whose fundamental group is free (a sphere with $\ge 3$ punctures, a torus with $\ge 1$ puncture or any surface of genus $\ge 2$), and choose a negatively curved (say hyperbolic) metric on $\Sigma'$.
Choose a free presentation of its fundamental group, and choose a random word of length $l$ in the generators: there is a unique closed geodesic for the associated hyperbolic geodesic for any chosen hyperbolic metric on $\Sigma'$.
Finally, label the intersections and resolve them according to the following procedure so as to obtain a union of simple loops: while some strands have some self-intersections, resolve their self-intersection with the smallest label in the unique way that increases the number of strands.

\paragraph{The random spin model.}

Our second random model first constructs for given a size parameter $n\in \N$, a random surface $\Sigma$ of size $n\in \N$: choose a random $4$-valent multigraph with $n$ vertices and a random cyclic order on the vertices yielding a banded surface to which you may glue discs to obtain a closed surface.
Then construct a simple multiloop by choosing a random Euler multicycle of the embedded multigraph (for instance by taking a random walk on the edges and burning edges as they are used), and finally resolve self-intersections between strands as in the previous model.

\paragraph{The random braid model.}
Our third model constructs in any surface $\Sigma$, for any parameter $s\in \N$, a random simple multiloop with $s$ strands using the mapping class group action.
First choose a simple loop in $\Sigma$, then add several punctures in each region and apply a random element of the pure braid group of the punctured surface. Continue the process $s$ times, starting with the initial simple loop with different punctures and applying a random pure braid. Finally erase all punctures.

\subsubsection{Pinning games}

One may devise and study various multiplayer games related to pinning multiloops. In what follows we define just one and reflect on the first combinatorial game-theoretic questions that come to mind.

\begin{definition}[unpinning avoidance game]
To a multiloop $\gamma\colon \sqcup_1^s S^1\looparrowright \Sigma$ we associate the following impartial combinatorial game for $2$ players called the \emph{unpinning avoidance game}.
The initial configuration has a pin in each region; two players take turns removing pins so as to leave a pinning set after their turn; the first player with no legal move loses. Note that the states of the game are the vertices of the pinning ideal $\mathcal{PI}(\gamma)$.

We denote by $\mathsf{Unpin_{AVOID}}(\Sigma,s)$ the decision problem which  has 

\begin{itemize}[align=left, noitemsep]
    \item[Instance:] A multiloop $\gamma\colon \sqcup_{i=1}^s \Sph^1\looparrowright \Sigma$.
    \item[Question:] Does Player 1 have a winning strategy in the unpinning avoidance game for $\gamma$?
\end{itemize}
We denote the restriction of $\mathsf{Unpin_{AVOID}}(\Sigma,s)$ to simple multiloops by $\mathsf{SimpleUnpin_{AVOID}}(\Sigma,s)$.
\end{definition}

\begin{question}[nimbers]
The Sprague--Grundy theorem \cite{sprague_35,grundy1939mathematics} applies to unpinning avoidance games. Can we construct infinite families of multiloops yielding increasingly hard unpinning avoidance games (for instance whose Grundy numbers grow fast with the number of double points)? What are the Grundy numbers associated to multiloops with $s$ strands in genus $g$? How does restricting to simple multiloops affect the answers to these questions?
\end{question}

\begin{remark}[node kayles]
The unpinning avoidance game appears related (via the mobidisc formula) to a game that some authors call node kayles. Node kayles is played on a graph: Two players start with a vertex cover consisting of all vertices and take turns removing vertices from the cover so as to leave a vertex cover after each turn; the last player who can make a legal move wins. We denote by \textsf{Node Kayles} the decision problem concerned with deciding winning strategies in node kayles. It turns out that \textsf{Node Kayles} (along with other closely related games such as $\mathsf{G_{AVOID}(POS\text{ }DNF)})$ is \textsf{PSPACE}-complete \cite{schaefer_nodekayles_1976}.
\end{remark}

\begin{conjecture}[\textsf{PSPACE}-complete]
The problem $\mathsf{Unpin_{AVOID}}(\Sigma,s)$ is \textsf{PSPACE}-complete for any $s$ and $\Sigma$, and so is $\mathsf{SimpleUnpin_{AVOID}}(\Sigma,s)$ for any $s\geq 4$.
\end{conjecture}

\begin{remark}[$\mathsf{Unpin_{AVOID}}(\Sigma,s)$ versus $\mathsf{Pin}(\Sigma, s)$]
It appears that \textsf{Node Kayles} is to \textsf{Vertex Cover} as $\mathsf{Unpin_{AVOID}}(\Sigma,s)$ is to $\mathsf{Pin}(\Sigma, s)$. In particular, the complexity of $\mathsf{Pin}(\Sigma, s)$ for a given class of multiloops has limited direct bearing on the complexity of $\mathsf{Unpin_{AVOID}}(\Sigma,s)$ for that same class. One difficulty is that na\"{i}ve reductions from \textsf{Node Kayles} would require limiting mobidisc gadgets to have $2$ regions each. Another is that we do not know if \textsf{Node Kayles} remains \textsf{PSPACE}-complete in restriction to planar graphs and/or graphs with a max-degree bound. Additionally, \textsf{Node Kayles} in restriction to max-degree $3$ trees (the  complexity of which appears to be open, see for instance \cite{bodlaender_kayles_2016,kobayashi_kayles_2018}) seems likely to reduce to $\mathsf{SimpleUnpin_{AVOID}}(\Sigma,3)$, so determining the complexity of the latter is probably a difficult task for the ``meta'' reason that it would break new ground on a longstanding open question.
\end{remark}

\section{General results for simple multiloops}
\label{sec:pinning_simple_multiloops_general}

In this section, we first recall some terminology about Boolean formulae in conjunctive normal form that are positive.
This will serve to set up a correspondence between the pinning sets of loops or simple multiloops and the satisfying assignments of such a formula.
After reviewing the \textsf{Vertex Cover} problem and a few of its variants, we use these ideas to prove a preliminary result (Theorem \ref{thm:simple-no-strand-restiction-hard}) that pinning simple multiloops with arbitrary many strands is \textsf{NP}-complete.

\subsection{Boolean formulae in conjunctive normal form}

\begin{definition}[formulae and satisfying assignments]
Consider a set of variables $x=\{x_1,\dots,x_r\}$ which may take boolean values $\{\true,\false\}$.
A \emph{formula} $\phi$ is an expression in the variables $x$ involving negations $\neg$, disjunctions $\vee$ , and conjunctions $\wedge$ (and parentheses). 

Evaluating the variables $x$ at $\xi \in \{\true,\false\}^r$ yields a value $\phi(\xi)\in \{\true,\false\}$ for the formula $\phi$.
Such an assignment $\xi$ of the variables $x$ is said to \emph{satisfy} $\phi$ when $\phi(\xi)=\true$.

Two formulae $\phi,\phi'$ on the set of variables $x$ are called \emph{logically equivalent} when their evaluations on every assignment $\xi$ of the variables $x$ are equal $\phi(\xi)=\phi'(\xi)$. 
\end{definition}

It is a classical observation that every formula is equivalent to a conjunctive normal form, which we now define.

\begin{definition}[positive conjunctive normal forms]
Consider a set of boolean variables $x=\{x_1,\dots,x_r\}$.

A \emph{clause} is a formula of the form $l_1\vee \dots \vee l_k$ where each $l_j$ is a \emph{literal}, namely a variable $x_t$ or its negation $\neg x_i$.
A \emph{conjunctive normal form} (abbreviated \textsf{CNF}) is a formula which is conjunction $c_1\wedge \dots\wedge c_m$ of clauses $c_j$.
A conjunctive normal form is called \emph{positive} (or \emph{monotone}) when none of the clauses feature negated variables.
\end{definition}

\begin{definition}[solution ideal]
    Consider a positive conjunctive normal form $\phi$ over a set of boolean variables $x=\{x_1,\dots,x_r\}$.
    
    For an assignment $\xi$ of $x$ satisfying $\phi$, we call $\{x_i \colon \xi_i=\true\}\in \Parts(x)$ a \emph{solution} of $\phi$. 
    
    The solutions of $\phi$ form an ideal $\mathcal{S}(\phi) \subset \Parts(x)$: a sub-poset which is absorbing under union, and it contains the whole set of all elements $\{x_1,\dots,x_r\}$.
\end{definition}

\begin{remark}[pruning]
    \label{rem:prune}
    By definition if two positive \textsf{CNF} are equivalent then their solution ideals are isomorphic.
    Thus to compute the solution ideal, we may first ``prune'' it by retaining only those \emph{innermost} clauses which are implied by no other clause.
\end{remark}

\begin{remark}[hypergraph vertex cover]
    The solutions of a positive conjunctive normal form correspond to the vertex covers of the hypergraph whose vertices are indexed by the variables and hyperedges correspond to the clauses.
\end{remark}

\subsection{Pinning simple multiloops is \textsf{NP}}
\label{sec:simple-multiloop}

Throughout this subsection, we fix an oriented surface $\Sigma$. 

Before dealing with simple multiloops, let first recall from \cite[Section 2]{simon_stucky_2025pinningidealmultiloop} how the problem of pinning loops reduces to a problem regarding satisfying assignments of its mobidisc formula, a positive \textsf{CNF}.

Hass and Scott \cite{Hass-Scott_Intersection-curves-surfaces_1985} characterize taut loops in terms of the absence of singular monogons and bigons, which we define below for multiloops and subsequently use for simple multiloops.

\begin{definition}[singular, embedded, and regional monorbigons]
\label{def:singular-monorbigon}

Consider a multiloop $\gamma \colon \sqcup_{i=1}^s\Sph^1\looparrowright \Sigma$. 
A strand of $\gamma$ obtained by restriction to one of the circles will be denoted $\gamma_i\colon \Sph^1_i\looparrowright \Sigma$.

A \emph{singular monogon} of the strand $\gamma_i$ is a non-trivial closed interval $I\subset \Sph^1_i$ such that $\gamma(\partial I)=\{x\}$ is a double point of $\gamma$ and $\gamma(I)$ is null-homotopic.
This singular monogon is \emph{embedded} when $\gamma$ is injective on the interior of $I$, and \emph{regional} when $\gamma(I)$ bounds a region of $\gamma$.

A \emph{singular bigon} between strands $\gamma_i,\gamma_j$ (which may coincide) is a disjoint union of non-trivial closed intervals $I \sqcup J$ where $I\subset \Sph^1_i$ and $J\subset \Sph^1_j$ such that $\gamma(\partial I)=\{x,y\}=\gamma(\partial J)$ for distinct double points $x,y$ of $\gamma$ and $\gamma(I\sqcup J) \subset \Sigma$ is null-homotopic.
This singular bigon is \emph{embedded} when $\gamma$ is injective on the interior of $I\sqcup J$, and \emph{regional} when $\gamma(I\sqcup J)$ bounds a region of $\gamma$.

A \emph{singular monorbigon} $K$ refers to a singular monogon $I$ or singular bigon $I\sqcup J$.
The restriction $\gamma(K)$ is a subloop of $\gamma$ and we call $\gamma(\partial K)$ its \emph{marked points}.
\end{definition}

\begin{figure}[H]
    \centering
    \scalebox{0.26}{\scalefont{2}{\input{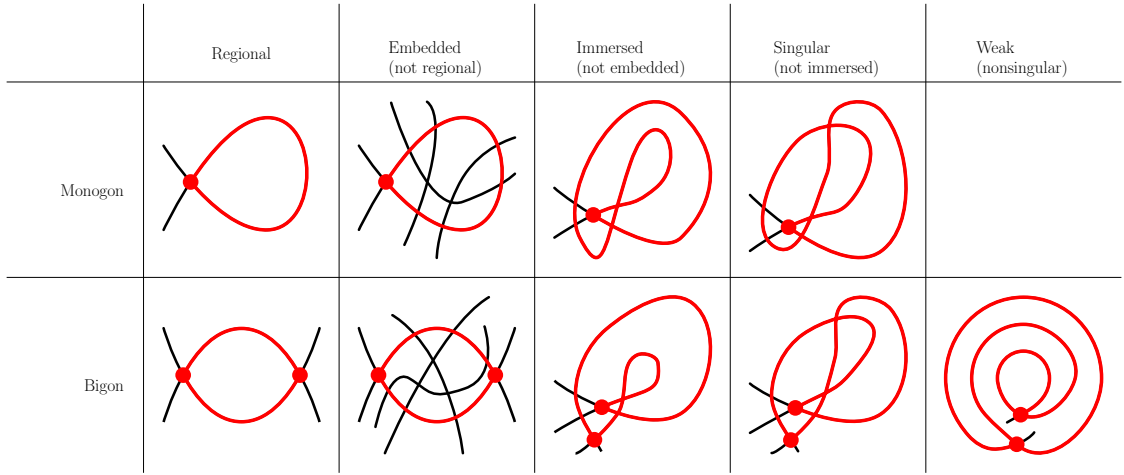}}}
    \caption{Examples and non-examples of different kinds of monorbigons.}
    \label{fig:monorbigons}
\end{figure}

\begin{definition}[immersed monorbigons and mobidiscs]
\label{def:immersed-monorbigon-mobidiscs}
    For a multiloop $\gamma \colon \sqcup_{i=1}^s\Sph^1\looparrowright \Sigma$, a singular monorbigon $K$ of $\gamma$ is called \emph{immersed} when there exists an immersion $\iota\colon \Disc\looparrowright \Sigma$ such that the restriction $\gamma \colon K \looparrowright \Sigma$ factors through $\iota \colon \partial \Disc \looparrowright \Sigma$.
    The image $\iota(\Disc)$ of such an immersion covers a set of regions of $\gamma$, thus defining an element in $\mathcal{P}(\mathcal{R}(\gamma))$, called a \emph{mobidisc}. 
    The set of all mobidiscs $\MoB(\gamma)\subset \mathcal{P}(\mathcal{R})$ defines the hyperedges of a hypergraph with vertices $\mathcal{R}$. 
    
    An immersed monorbigon or its associated mobidisc is called \emph{innermost} when the associated mobidisc contains no other mobidiscs of $\gamma$. In particular, a regional monorbigon gives rise to an innermost mobidisc with exactly one region.
\end{definition}

\begin{remark}[embedded versus regional]
Figure \ref{fig:monorbigons} illustrates monorbigons with increasing restrictions from (non-singular to) singular to immersed to embedded to regional. Most important to this work is the difference between embedded and regional bigons.
\end{remark}

The following is implicit in \cite{Hass-Scott_Intersection-curves-surfaces_1985} as explained in \cite{simon_stucky_2025pinningidealmultiloop}. It remains true with ``mobidisc'' replaced with ``innermost mobidisc''.

\begin{theorem}[pinning loops with mobidiscs]
    \label{thm:pinning-mobidiscs}
    For a loop $\gamma \colon \Sph^1 \looparrowright \Sigma$, a subset of regions $P\subset\mathcal{P}(\mathcal{R}(\gamma))$ is pinning $\gamma$ if and only if every mobidisc $D\in \MoB(\gamma)$ has $D\cap P\ne \emptyset$.
\end{theorem}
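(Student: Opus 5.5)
The proof has two directions, and both rest on the theorem of Hass and Scott \cite{Hass-Scott_Intersection-curves-surfaces_1985}: a loop in an orientable surface which is not taut, that is, which has more double points than its self-intersection number, has a singular monogon or a singular bigon. We apply this theorem in the punctured surface $\Sigma\setminus P$, which is still orientable. Hass and Scott also show more: if the loop is not taut, there is such a monorbigon which is \emph{immersed}, i.e.\ bounded by an immersed disc. We will take this strengthened statement as known, along with the fact that homotopies in $\Sigma\setminus P$ can be realised in a way that keeps track of the punctures.

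\emph{Necessity.} Suppose some mobidisc $D\in\MoB(\gamma)$ satisfies $D\cap P=\emptyset$. Let $\iota\colon\Disc\looparrowright\Sigma$ be an immersion whose boundary traces the monorbigon $K$, with $\iota(\Disc)=D$. Since $D$ avoids every region of $P$, the disc $\iota(\Disc)$ also lies in $\Sigma\setminus P$. We then use the immersed disc to homotope the arc(s) of $K$ across $\iota(\Disc)$, pulling the image through the disc. For a bigon, this removes the two marked double points $x,y$. For a monogon, it removes the single marked point $x$ after shrinking the loop. In either case, check that the homotopy creates no new double points, so that the count drops strictly. This step follows the classical ``removing an innermost bigon'' argument. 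If the bigon is immersed but not embedded, we first pass to an innermost mobidisc contained in $D$, which also avoids $P$, and this reduces to the embedded case. Hence $\#\gamma>\si_P(\gamma)$, so $P$ is not pinning.

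\emph{Sufficiency.} Suppose $P$ is not pinning, so $\gamma$ is not taut in $\Sigma\setminus P$. By Hass--Scott applied in $\Sigma\setminus P$, $\gamma$ has a singular monogon or bigon whose boundary is null-homotopic in $\Sigma\setminus P$. Using the strengthened form, we may take it to be immersed, bounded by $\iota\colon\Disc\looparrowright\Sigma\setminus P$. Viewed in $\Sigma$, the image $\iota(\Disc)$ is still a union of regions of $\gamma$. These regions are closures of components, and the disc is bounded by arcs of $\gamma$. So $\iota(\Disc)$ is a mobidisc $D\in\MoB(\gamma)$ that misses every region of $P$, giving $D\cap P=\emptyset$.

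The variant with innermost mobidiscs follows at once. Every mobidisc contains an innermost one, so a set $P$ that meets all innermost mobidiscs meets all mobidiscs.

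The main obstacle is the step from singular to immersed monorbigons in the sufficiency direction. Hass--Scott produce null-homotopic singular monorbigons, and it is a nontrivial part of their shortening argument that one can be chosen to bound an immersed disc. We will cite that part rather than reprove it, noting that orientability is exactly what makes their argument work.
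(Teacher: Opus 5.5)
The paper gives no proof of this theorem. It says the statement is implicit in \cite{Hass-Scott_Intersection-curves-surfaces_1985}, as explained in \cite{simon_stucky_2025pinningidealmultiloop}. Your sufficiency half (if $P$ is not pinning, some mobidisc misses $P$) amounts to that same citation, and it is fine. Your remark that $\iota(\Disc)$ is a union of closed regions, so avoiding the punctures is the same as avoiding the regions of $P$, is correct.

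The gap is in your necessity half. You claim that if the monorbigon bounding $D$ is immersed but not embedded, passing to an innermost mobidisc inside $D$ reduces you to the embedded case. Nothing forces an innermost mobidisc of a loop to come from an embedded monorbigon, and in general it does not. Hass and Scott exhibit loops with excess self-intersection that have no embedded monogon or bigon at all. Such a loop still has mobidiscs, by your own sufficiency half with $P=\emptyset$, hence innermost ones, and none of them is embedded. This is exactly why the paper uses immersed monorbigons for loops and obtains embedded bigons only for simple multiloops, via Lemma~\ref{lem:simple_multiloops_have_embedded_monorbigons}.

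Nor can you push $\gamma(I)$ directly across a non-embedded immersed disc. The disc may overlap itself and its own boundary arcs, and $\gamma(I)$ and $\gamma(J)$ may cross away from the marked points. The classical bookkeeping showing that no new double points appear is then no longer available. So the step you flagged as the main obstacle (singular to immersed, which you cite) is not where your argument actually breaks.

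The repair is not to build a homotopy at all. Since $\iota(\Disc)\subset\Sigma\setminus P$, the subloop $\gamma(K)$ is null-homotopic in $\Sigma\setminus P$, so $K$ is a singular monorbigon of $\gamma$ there. What you need is the converse half of the Hass--Scott criterion: a loop with a singular monogon or bigon is not taut. That half is proved by counting in the universal cover of $\Sigma\setminus P$, not by an isotopy.
\begin{itemize}
\item A singular monogon yields a lift of $\gamma$ that meets itself.
\item A singular bigon yields two lifts meeting in two distinct points.
\item For a geodesic representative, lifts are embedded, and two lifts with distinct endpoints at infinity meet at most once.
\end{itemize}
Since each pair of lifts of $\gamma$ meets at least as often as the corresponding geodesic pair, this forces $\#\gamma>\si_P(\gamma)$. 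Some care is needed for non-primitive homotopy classes and for the few non-hyperbolic surfaces $\Sigma\setminus P$; that is part of what the paper delegates to its citations.
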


\begin{definition}[mobidisc formula]
\label{def:mobidisc_cnf}
To a multiloop $\gamma \colon \sqcup_{i=1}^s\Sph^1  \looparrowright \Sigma$ we associate its \emph{mobidisc formula} denoted by $\Phi(\gamma)$: it is the positive boolean formula in conjunctive normal form on the set of boolean variables indexed by $\mathcal{R}(\gamma)$ whose clauses correspond to $\MoB(\gamma)\subset \mathcal{P}(\mathcal{R})$.
\end{definition}

The following is a reformulation of Theorem \ref{thm:pinning-mobidiscs}.

\begin{corollary}[pinning a loop = satisfying its mobidisc formula]
\label{cor:LooPin-to-mobidisc-CNF}
The pinning sets of a loop correspond to the solutions of its mobidisc formula.
In particular, the pinning ideal of a loop $\gamma$ is isomorphic to the solution ideal of $\Phi(\gamma)$. 
\end{corollary}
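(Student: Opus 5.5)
The corollary is a reformulation of Theorem \ref{thm:pinning-mobidiscs}, so the plan is to translate that theorem through the dictionary between subsets of regions and boolean assignments.

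First, I will set up the bijection. To a subset $P\subset \mathcal{R}(\gamma)$ I associate the assignment $\xi^P$ of the variables indexed by $\mathcal{R}(\gamma)$ given by $\xi^P_R=\true$ if and only if $R\in P$. This map $\Parts(\mathcal{R}(\gamma))\to\{\true,\false\}^{\mathcal{R}(\gamma)}$ is clearly bijective. It is inclusion-preserving in both directions, since the solution associated to $\xi^P$ is exactly $P$.

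Second, I will evaluate $\Phi(\gamma)$ on $\xi^P$. By Definition \ref{def:mobidisc_cnf}, the clauses of $\Phi(\gamma)$ are $c_D=\bigvee_{R\in D} x_R$ for $D\in\MoB(\gamma)$. Since the formula is positive, $c_D(\xi^P)=\true$ if and only if some $R\in D$ has $R\in P$, that is, if and only if $D\cap P\neq\emptyset$. A conjunction is true exactly when all its clauses are. Hence $\xi^P$ satisfies $\Phi(\gamma)$ if and only if every $D\in\MoB(\gamma)$ meets $P$. By Theorem \ref{thm:pinning-mobidiscs}, this holds if and only if $P$ is a pinning set of $\gamma$. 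So the bijection restricts to a bijection $\mathcal{PI}(\gamma)\to\mathcal{S}(\Phi(\gamma))$.

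Third, since this bijection is the identity on underlying subsets of $\mathcal{R}(\gamma)$, it preserves inclusion and unions. It is therefore an isomorphism of posets, and of ideals absorbing under union. This gives the second sentence of the statement.

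There is one point to check. Each mobidisc $D$ must be nonempty so that $c_D$ is a genuine clause and not the empty (always false) clause. This holds because an immersed disc covers at least one region. If one prefers innermost mobidiscs, the remark after Definition \ref{def:immersed-monorbigon-mobidiscs} together with Remark \ref{rem:prune} shows the solution ideal is unchanged. I expect no real obstacle: all the topological content sits in Theorem \ref{thm:pinning-mobidiscs}.
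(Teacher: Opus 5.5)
Your proposal is correct and takes the same approach as the paper. The paper gives no separate argument: it presents the corollary as a direct reformulation of Theorem \ref{thm:pinning-mobidiscs}, and you have spelled out the subset/assignment dictionary it leaves implicit. Your nonemptiness check is harmless but unnecessary, since for $D=\emptyset$ the empty clause is false and $D\cap P=\emptyset$ holds for every $P$, so the equivalence survives anyway.
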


In this work we are interested in multiloops, however the following Remark \ref{rem:one-strand-only} shows that Corollary \ref{cor:LooPin-to-mobidisc-CNF} does not generalize so immediately.

\begin{remark}[one strand only]
    \label{rem:one-strand-only}
    Theorem \ref{thm:pinning-mobidiscs} is false for (non-simple) multiloops with more than one strand. Indeed the example below is based on \cite[Figure 0.1]{Hass-Scott_Intersection-curves-surfaces_1985} and exhibits a multiloop $\Sph^1 \sqcup \Sph^1 \looparrowright \Sph^2\setminus \{p_1,p_2\}$ which is not taut and has no singular monorbigons.
\end{remark}

 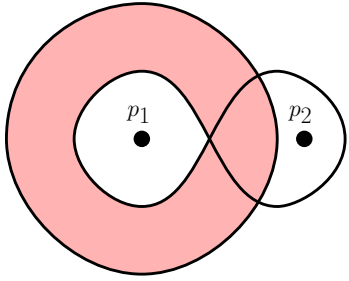
\begin{figure}[H]
        \centering
        \scalebox{0.36}{\definecolor{c010000}{RGB}{1,0,0}

\def \globalscale {1.000000}
\begin{tikzpicture}[y=1cm, x=1cm, yscale=\globalscale,xscale=\globalscale, every node/.append style={scale=\globalscale}, inner sep=0pt, outer sep=0pt]
  %path54
  \path[draw=red,fill=red,fill opacity=0.3,line cap=butt,line join=miter,line width=0.0265cm,miter limit=4.0] (5.0265, 10.1852).. controls (2.5265, 10.1852) and (0.0265, 8.1852) .. (0.0265, 5.1852).. controls (0.0265, 2.6852) and (2.5265, 0.1852) .. (5.0265, 0.1852).. controls (7.5265, 0.1852) and (10.0265, 2.6852) .. (10.0265, 5.1852).. controls (10.0265, 7.6852) and (7.5265, 10.1852) .. (5.0265, 10.1852) -- cycle(5.0265, 7.6852).. controls (6.205, 7.6852) and (6.8657, 6.4352) .. (7.5265, 5.1852).. controls (6.8657, 3.9352) and (6.205, 2.6852) .. (5.0265, 2.6852).. controls (3.848, 2.6852) and (2.5265, 4.0067) .. (2.5265, 5.1852).. controls (2.5265, 6.3637) and (3.848, 7.6852) .. (5.0265, 7.6852) -- cycle;

  %path41
  \path[draw=black,line cap=butt,line join=miter,line width=0.1058cm] (5.0265, 10.1852).. controls (2.5265, 10.1852) and (0.0265, 8.1852) .. (0.0265, 5.1852).. controls (0.0265, 2.6852) and (2.5265, 0.1852) .. (5.0265, 0.1852).. controls (7.5265, 0.1852) and (10.0265, 2.6852) .. (10.0265, 5.1852).. controls (10.0265, 7.6852) and (7.5265, 10.1852) .. (5.0265, 10.1852) -- cycle;

  %path42
  \path[draw=black,line cap=butt,line join=miter,line width=0.1058cm] (5.0265, 7.6852).. controls (7.3835, 7.6852) and (7.6694, 2.6852) .. (10.0265, 2.6852).. controls (10.8605, 2.6852) and (12.5398, 3.7133) .. (12.5332, 5.1852).. controls (12.5265, 6.6707) and (10.8605, 7.6852) .. (10.0265, 7.6852).. controls (7.6694, 7.6852) and (7.3835, 2.6852) .. (5.0265, 2.6852).. controls (3.8479, 2.6852) and (2.5265, 4.0067) .. (2.5265, 5.1852).. controls (2.5265, 6.3637) and (3.8479, 7.6852) .. (5.0265, 7.6852) -- cycle;

  %path43
  \path[fill=c010000,even odd rule,line cap=round,line width=0.0873cm] (5.0265, 5.1852) circle (0.3044cm);

  %circle43
  \path[fill=c010000,even odd rule,line cap=round,line width=0.0873cm] (11.0265, 5.1852) circle (0.3044cm);

  \tikzstyle{every node}=[font=\fontsize{35}{35}\selectfont]
%text54
  \node[anchor=south west] (text54) at (4.4934, 5.778){$p_1$};

  %text57
  \node[anchor=south west] (text57) at (10.473, 5.778){$p_2$};

\end{tikzpicture}}
        \caption{This multiloop $\Sph^1 \sqcup \Sph^1 \looparrowright \Sph^2\setminus \{p_1,p_2\}$ is not taut yet has no singular monorbigons.}        \label{fig:no_singular_monorbigon_non_taut_multiloop}
\end{figure}

For simple multiloops, we obtain an analog to Theorem \ref{thm:pinning-mobidiscs} as a corollary of the following. It remains true with ``embedded bigon'' replaced with ``innermost embedded bigon''.

\begin{lemma}[embedded bigons in non-taut simple multiloops] 
\label{lem:simple_multiloops_have_embedded_monorbigons}
If $\gamma\colon \sqcup_{i=1}^s \Sph^1\looparrowright \Sigma$ is a simple multiloop which is not taut, then it has an embedded bigon.
\end{lemma}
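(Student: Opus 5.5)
Since every strand $\gamma_i$ of $\gamma$ is embedded, the number of double points splits as $\#\gamma=\sum_{i<j}\#(\gamma_i\cap\gamma_j)$. Tautness of a simple multiloop therefore becomes a pairwise question, and the plan is to reduce to two strands and then invoke the classical bigon criterion for simple closed curves.

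\textbf{Step 1: reduction to a pair of strands.} Each strand is homotopic to a simple curve (itself), and on an orientable surface we may pass to geodesic representatives. Concretely, we use the Freedman--Hass--Scott results: fix a hyperbolic (or flat, in the few exceptional cases) metric on $\Sigma\setminus P$. The geodesic representatives $\gamma_i^*$ of the simple closed curves $\gamma_i$ are simple, or are powers of simple curves; since each $\gamma_i$ is simple and primitive or inessential, we handle inessential and peripheral strands separately. Each pair $\gamma_i^*,\gamma_j^*$ meets minimally, so $\si_P(\gamma)=\sum_{i<j} i(\gamma_i,\gamma_j)$, where $i$ denotes the geometric intersection number. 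This requires treating a few cases by hand. Inessential strands can be made disjoint from everything. For parallel or equal homotopy classes, the geometric intersection is zero, and we take disjoint parallel copies. Hence if $\gamma$ is not taut, then some pair satisfies $\#(\gamma_i\cap\gamma_j)>i(\gamma_i,\gamma_j)$.

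\textbf{Step 2: the bigon criterion.} For two simple closed curves $\alpha,\beta$ in a surface intersecting transversally but not minimally, there is an embedded bigon: a disc whose boundary consists of one arc of $\alpha$ and one arc of $\beta$ (Hass--Scott \cite{Hass-Scott_Intersection-curves-surfaces_1985}; see also Farb--Margalit's bigon criterion). One caveat must be handled: if $\alpha,\beta$ are isotopic, the criterion still gives a bigon when they intersect, because $i(\alpha,\alpha)=0$. Applying this to $\gamma_i,\gamma_j$ in $\Sigma\setminus P$ yields intervals $I\subset\Sph^1_i$ and $J\subset\Sph^1_j$ that bound an embedded disc, with endpoints at two double points.

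\textbf{Step 3: upgrading to an embedded bigon of $\gamma$.} The disc from Step 2 may have its interior crossed by the other strands $\gamma_k$, and its boundary arcs may meet further double points. Definition \ref{def:singular-monorbigon} only requires that $\gamma$ be injective on the interior of $I\sqcup J$. This holds automatically, since $\gamma_i$ and $\gamma_j$ are embedded and an arc of $\gamma_i$ cannot cross an arc of $\gamma_j$ away from the corners of an embedded bigon. The disc also gives null-homotopy in $\Sigma\setminus P$. So $I\sqcup J$ is an embedded bigon of $\gamma$, and the same disc shows that it is immersed, indeed embedded, avoiding $P$. For the innermost version, we choose a bigon of $\gamma_i\cup\gamma_j$ with a minimal disc. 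Any strand $\gamma_k$ entering it must exit through the two sides, since it is simple, closed, and cannot end in the disc. Thus $\gamma_k$ cuts the disc into a smaller bigon or a triangle. Further care with triangles, using the standard argument that an arc meeting both sides yields a smaller bigon with $\gamma_k$ as one side, produces an innermost bigon.

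\textbf{Main obstacle.} The main obstacle is Step 1, the additivity $\si_P(\gamma)=\sum_{i<j} i(\gamma_i,\gamma_j)$, in particular the simultaneous realization of minimal position for all pairs. I would use geodesics, perturbing coincident or parallel geodesics slightly. The alternative is to apply the bigon removal of Step 2 iteratively on the whole multiloop, showing that it terminates at a minimal configuration. Either route suffices.
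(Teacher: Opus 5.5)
Your argument is correct and is essentially the paper's, which just cites Hass--Scott's Lemma~3.1 \cite{Hass-Scott_Intersection-curves-surfaces_1985}: you unpack that citation into the bigon criterion for one pair of strands with excess intersection, together with the (correct) check that the resulting disc is an embedded bigon in the sense of Definition~\ref{def:singular-monorbigon}. Note that Step~1 needs only the trivial inequality $\si_P(\gamma)\ge\sum_{i<j} i(\gamma_i,\gamma_j)$, since any multiloop homotopic to $\gamma$ has at least $i(\gamma_i,\gamma_j)$ crossings between its $i$-th and $j$-th strands, so the simultaneous minimal position you call the main obstacle is never used (and in your optional innermost sketch, an arc of $\gamma_k$ crossing from the $\gamma_i$-side to the $\gamma_j$-side only cuts the disc into two triangles and creates no smaller bigon; instead, simply take an embedded bigon that is minimal under inclusion).
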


\begin{proof}
    This follows from \cite{Hass-Scott_Intersection-curves-surfaces_1985}[Lemma 3.1].
\end{proof}

\begin{corollary}[pinning simple multiloops with embedded bigons]
    \label{cor:pinning-simple-mobidiscs}
    
    For a simple multiloop $\gamma \colon \sqcup_{i=1}^s\Sph^1\looparrowright \Sigma$, a subset of regions $P\in\mathcal{P}(\mathcal{R}(\gamma))$ is pinning $\gamma$ if and only if every mobidisc $D\in \MoB(\gamma)$ has $D\cap P\ne \emptyset$. 
    Hence, the pinning ideal of $\gamma$ is isomorphic to the solution ideal of its mobidisc formula  $\Phi(\gamma)$.
\end{corollary}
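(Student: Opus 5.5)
We prove the two implications separately. The forward direction is the easy one. Suppose $P$ is pinning, so $\gamma$ is taut in $\Sigma\setminus P$, and suppose for contradiction that some mobidisc $D\in\MoB(\gamma)$ has $D\cap P=\emptyset$. Then $D$ comes from an immersed monorbigon $K$ with immersion $\iota\colon\Disc\looparrowright\Sigma$ whose image covers exactly the regions in $D$. Since these regions avoid $P$, the immersion $\iota$ lands in $\Sigma\setminus P$. Hence $\gamma(K)$ is null-homotopic in $\Sigma\setminus P$, so $K$ is a singular monorbigon of $\gamma$ viewed in the punctured surface. Moreover, since $\gamma$ is simple, $K$ cannot be a monogon, so it is a bigon $I\sqcup J$. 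We then homotope $\gamma$ across the immersed disc: push the arc $I$ along $\iota(\Disc)$ past $J$. This removes the two marked double points $x,y$ and creates no new ones in the limit, which contradicts tautness. (Alternatively we can quote Hass--Scott: a curve in an orientable surface having a singular bigon which is immersed, in particular one that lifts to an embedded bigon in the universal cover, is not in minimal position.) The safest route is the universal cover. Lift $\iota$ to the universal cover of $\Sigma\setminus P$. There the two lifted arcs bound a disc, so two lifts of strands intersect twice. In a simply connected (hyperbolic or Euclidean) cover, this contradicts the standard minimal-position criterion that lifts of taut simple curves meet at most once. The same argument applies when $J$ lies on the same strand as $I$ or on a different one.

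For the converse, assume every mobidisc meets $P$, and suppose $\gamma$ is not taut in $\Sigma\setminus P$. Apply Lemma \ref{lem:simple_multiloops_have_embedded_monorbigons} to $\gamma$ in the surface $\Sigma\setminus P$, which is still orientable. This yields an embedded bigon $I\sqcup J$ whose image is null-homotopic in $\Sigma\setminus P$. An embedded null-homotopic simple closed curve in an orientable surface bounds an embedded disc, by the classical result of Epstein. So $\gamma(I\sqcup J)$ bounds a disc $E\subset\Sigma\setminus P$. The inclusion of $E$ is an embedding, hence an immersion, and it extends $\gamma|_{I\sqcup J}$ along $\partial\Disc$. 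Therefore $I\sqcup J$ is an immersed bigon of $\gamma$ in $\Sigma$ (forgetting the punctures), and its mobidisc $D$ is the set of regions of $\gamma$ contained in $E$. Since $E$ avoids $P$, we get $D\cap P=\emptyset$, a contradiction.

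The ``Hence'' statement then follows directly from the definition of the mobidisc formula $\Phi(\gamma)$. We expect the main obstacle to be the forward direction, namely justifying that an immersed (possibly non-embedded) bigon avoiding $P$ forces non-tautness in $\Sigma\setminus P$. We would handle it through the universal cover argument above, or by citing Hass--Scott's characterization for the relevant pair of strands.
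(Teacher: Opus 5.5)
Your proposal is correct and matches the paper's (implicit) proof. The paper derives the corollary directly from the Hass--Scott embedded-bigon lemma applied in the orientable surface $\Sigma\setminus P$; that is exactly your converse direction, with Epstein's theorem supplying the embedded disc and hence a mobidisc avoiding $P$. The paper leaves the forward direction as the standard fact that a bigon null-homotopic in $\Sigma\setminus P$ obstructs tautness. Of your arguments for that direction, only the universal-cover one is sound as written: pushing $I$ across a merely immersed disc, or across a disc that other strands enter and leave through $J$, can create new crossings. The universal-cover argument also needs one small extra remark, namely that tautness of the whole multiloop forces each pair of strands into minimal position, so that lifts of two strands cannot meet twice.
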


\begin{remark}[pruning $\Phi(\gamma)$ is equivalent to passing to innermost mobidiscs] By Remark \ref{rem:prune}, whenever convenient, we need only consider innermost mobidiscs when computing $\Phi(\gamma)$.    
\end{remark}

Since the mobidisc formula may be computed in polynomial time for simple multiloops by a na\"{i}ve cubic algorithm, we may verify pinning sets in polynomial time.

\begin{corollary}[pinning simple multiloops is \textsf{NP}]
    \label{cor:pinning-simple-easy}
    For a fixed orientable surface $\Sigma$ we have:
    \begin{enumerate}[noitemsep]
        \item For every $s\in \N$, the problem $\mathsf{SimplePin}(\Sigma,s)$ is in \textsf{NP}.
        \item The problem $\mathsf{SimplePin}(\Sigma)$ is in \textsf{NP}.
    \end{enumerate}
\end{corollary}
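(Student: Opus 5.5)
We show that a certificate for a ``yes'' instance can be checked in polynomial time. For an instance $(\gamma,k)$ of $\mathsf{SimplePin}(\Sigma,s)$, the certificate is a subset $P\subset\mathcal{R}(\gamma)$ with $\abs{P}\le k$. Its size is at most $\abs{\mathcal{R}(\gamma)}$, which is $O(\#\gamma)$ for a filling multiloop by an Euler characteristic count, since $\Sigma$ is fixed; the non-filling case reduces to this one by the remark on filling multiloops. By Corollary~\ref{cor:pinning-simple-mobidiscs}, $P$ is pinning if and only if $P$ meets every mobidisc of $\gamma$. It suffices to check this for innermost mobidiscs, since any set meeting every innermost mobidisc meets every mobidisc. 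Moreover, by Lemma~\ref{lem:simple_multiloops_have_embedded_monorbigons} and the remark following it, it suffices to consider mobidiscs coming from innermost embedded bigons. The verifier therefore needs to enumerate, in polynomial time, a family of region sets containing all of these.

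Enumeration proceeds as follows. A simple multiloop has no monogons, since each strand is embedded. An embedded bigon is a pair of arcs $I\subset\gamma_i$, $J\subset\gamma_j$ with $i\neq j$, joining two common double points $x,y$ and with interiors disjoint. There are $O(\#\gamma^2)$ choices of the pair $\{x,y\}$ and, for each, $O(1)$ choices of arcs (two arcs on each of the two strands through $x$ and $y$). For each candidate we form the simple closed curve $\gamma(I\cup J)$, which is embedded if the interiors are disjoint; this is checked in linear time by walking along the arcs. We then decide whether it bounds a disc in $\Sigma$ and, if so, record the set of regions on the disc side. Using the combinatorial map encoding of \cite{simon_stucky_2025pinningidealmultiloop}, cut the surface along the curve by splitting the darts along the two arcs. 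Then compute the connected components and their Euler characteristics and boundary counts by a graph traversal, and select a component homeomorphic to a disc containing no puncture. This takes time linear in $\#\gamma$ per candidate, so cubic in total, matching the paper's remark about a naïve cubic algorithm.

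Finally, the verifier checks that each recorded region set meets $P$ and that $\abs{P}\le k$. This gives item (1). Item (2) follows because nothing in the algorithm depends on $s$: the running time is polynomial in $\#\gamma$ uniformly over all $s$, and the input size is already at least linear in $\#\gamma$, with $s\le\#\gamma$ for filling multiloops (up to trivially handled disjoint components). The main obstacle I expect is making precise that the disc test for each candidate curve is correct and polynomial in an arbitrary fixed orientable surface: we must distinguish null-homotopic embedded curves from essential ones, handle the possibility that both sides are discs on a sphere, and respect punctures. I would handle this through the cutting and Euler characteristic computation on the combinatorial map. When both sides are discs, I would record both region sets as clauses, which is harmless because each is a genuine mobidisc.
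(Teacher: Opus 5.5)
Your proposal is correct and follows the paper's argument: the certificate is the candidate pinning set $P$, and by Corollary~\ref{cor:pinning-simple-mobidiscs} (via Lemma~\ref{lem:simple_multiloops_have_embedded_monorbigons}) it is verified by computing the clauses of the mobidisc formula in polynomial time and checking that $P$ meets each clause. The paper only alludes to a ``na\"{i}ve cubic algorithm'' for computing these clauses. Your enumeration of candidate embedded bigons over pairs of double points, with a disc test done by cutting the combinatorial map, is a reasonable way of spelling that algorithm out.
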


\subsection{Pinning simple multiloops is \textsf{NP}-complete}

This subsection first defines the \textsf{Vertex Cover} problem and the variants that we will need for the purposes of this work. 
Then a preliminary result is proved: the complexity of pinning simple multiloops with no restriction on the number of strands is \textsf{NP}-complete.

\begin{definition}[\textsf{Vertex Cover} and variants]
\label{def:vc-prob}
The \textsf{Vertex Cover (VC)} problem has:
\begin{itemize}[align=left, noitemsep]
    \item[Instance:] A simple graph $G=(V,E)$ and an integer $k$.
    \item[Question:] Is there $U\subset V$ with $\Card(U)\leq k$ such that for all $\{v_1,v_2\}\in E$, at least one of $v_1$ and $v_2$ belongs to $U$?
\end{itemize}
We also define the following variants of \textsf{VC}:
\begin{itemize}
    \item \textsf{Planar Vertex Cover (PVC)} is the restriction of \textsf{VC} to input graphs which are planar.
    \item \textsf{3-Connected Cubic Planar Vertex Cover (\textsf{3C3PVC})} is the restriction of \textsf{PVC} to $3$-connected cubic graphs. Here, $3$-connected means that the graph is connected upon removal of any set of at most $2$ vertices, and cubic means that every vertex has degree $3$.
    \item \textsf{Bipartite Vertex Cover (BVC)} is the restriction of \textsf{VC} to input graphs which are bipartite.
\end{itemize}
\end{definition}

\begin{lemma}[\textsf{NP}-complete \textsf{Vertex Cover} variants] The problems \textsf{VC} and \textsf{PVC} are \textsf{NP}-complete.
\label{lem:vc_hardness}
\end{lemma}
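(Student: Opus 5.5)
The plan is to prove membership in \textsf{NP} directly, and to obtain \textsf{NP}-hardness by the classical chain of reductions $\textsf{3-SAT}\le \textsf{VC}\le \textsf{PVC}$. The first part is routine. A certificate for \textsf{VC} is a subset $U\subset V$, and checking $\Card(U)\le k$ and that every edge has an endpoint in $U$ takes time $O(\abs{V}+\abs{E})$. Since \textsf{PVC} is a restriction of \textsf{VC}, it is also in \textsf{NP}.

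For hardness of \textsf{VC}, I will use Karp's standard reduction from \textsf{3-SAT}. Given a \textsf{CNF} formula with $n$ variables and $m$ clauses, each of size $3$, build the following graph $G$.
\begin{itemize}
\item For each variable $x_i$, add an edge $\{x_i,\neg x_i\}$.
\item For each clause, add a triangle with one vertex per literal occurrence.
\item Join each triangle vertex by an edge to the corresponding literal vertex.
\end{itemize}
Set $k=n+2m$. A satisfying assignment yields a cover: take the true literal vertices, and in each triangle take the two vertices other than one whose literal is satisfied. Conversely, any cover uses at least one vertex per variable edge and at least two per triangle. So a cover of size $k$ uses exactly one vertex per variable edge and exactly two per triangle, and the uncovered triangle vertex forces its literal to be true. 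This gives a consistent satisfying assignment. The construction is clearly polynomial.

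For \textsf{PVC}, I would cite Garey--Johnson--Stockmeyer rather than reprove it. If a self-contained argument is wanted, the plan is the following. First reduce to planar \textsf{VC} via a planar \textsf{3-SAT} variant (Lichtenstein's \textsf{Planar 3-SAT} is \textsf{NP}-complete). Then check that the Karp construction above preserves planarity when the variable–clause incidence graph is planar. Variable edges become short segments placed along Lichtenstein's variable cycle. Each variable may need to be replaced by an even cycle of alternating literal copies, with $k$ increased accordingly, so that each clause triangle can attach to a copy on the correct side without crossings. An even cycle of length $2t$ has vertex cover number exactly $t$, and its two minimum covers encode the two truth values. This is what keeps the counting argument above valid.

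The main obstacle is this planarity-preserving step: routing clause triangles to literal copies without crossings, and verifying that the enlarged $k$ still forces exactly the tight covers. I would therefore simply invoke the literature (Garey--Johnson 1979; Garey, Johnson, Stockmeyer 1976) for \textsf{PVC}, which is what I expect the authors do.
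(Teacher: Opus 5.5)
Your proposal is correct and follows the paper's proof, which is entirely by citation: membership in \textsf{NP} is immediate (and inherited by the restriction \textsf{PVC}), \textsf{VC} is \textsf{NP}-hard by Karp, and \textsf{PVC} is \textsf{NP}-hard by Garey--Johnson--Stockmeyer (Theorem 2.7). Your edge-and-triangle reduction from \textsf{3-SAT} is a valid self-contained replacement for the Karp citation (strictly, it is the Garey--Johnson textbook gadget; Karp's original went through \textsf{Clique} by complementation), and the planar sketch is not needed since, like the paper, you cite the literature for \textsf{PVC}.
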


\begin{proof}
It is well-known (and straightforward) that \textsf{VC} is in \textsf{NP}, and thus so is the restriction \textsf{PVC}. It is also classical that \textsf{VC} is \textsf{NP}-hard \cite{Karp1972}, and that \textsf{PVC} is \textsf{NP}-hard \cite{Garey-Johnson-Stockmeyer_simplified-NP-complete-graph_1976}[Theorem 2.7].
\end{proof}

We will show that \textsf{BVC} is in \textsf{P} in Lemma \ref{lem:bvc_easy}, and that \textsf{3C3PVC} is \textsf{NP}-complete in Lemma \ref{lem:pvc3c_hard}.

\begin{theorem}[pinning simple multiloops with no strand restriction is \textsf{NP}-hard]
    \label{thm:pinning-simple-hard}
    For a fixed orientable surface $\Sigma$, the problem $\mathsf{SimplePin}(\Sigma)$ is \textsf{NP}-hard.
\label{thm:simple-no-strand-restiction-hard}
\end{theorem}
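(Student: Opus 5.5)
By Remark \ref{rmk:plane_enough} it is enough to treat $\Sigma=\R^2$. We reduce from \textsf{PVC}, which is \textsf{NP}-complete by Lemma \ref{lem:vc_hardness}. Since the number of strands is unrestricted, each edge of the graph can get its own strands. Given a planar graph $G=(V,E)$ and $k$, we first compute in polynomial time a plane embedding of $G$ and discard isolated vertices, which never belong to a minimum cover. Then we build a simple multiloop $\gamma_G$ in the plane as follows.

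\textbf{Vertex regions.} Each vertex $v$ is represented by a small disc region $R_v$. Each edge $e=\{u,v\}$ is represented by a pair of simple closed curves $\alpha_e,\beta_e$. They run along a thin neighbourhood of the drawn edge and are arranged so that they create an embedded bigon whose interior contains exactly the two regions $R_u$ and $R_v$ near its two ends. To keep each $R_v$ a single region, we build it as a ``hub'': a small circle $c_v$ around $v$, one strand per vertex. The edge curves then enter the disc bounded by $c_v$, crossing $c_v$ twice each. Concretely, $\alpha_e$ and $\beta_e$ form a lens: two arcs that cross each other exactly twice, near the two endpoints. Each endpoint crossing sits inside the corresponding hub disc, so the bigon covers the thin strip along $e$ together with pieces of the hub discs.

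\textbf{Cost of the strips.} The strip regions along each edge must be neutralized. The plan is to make every region of the lens other than the two hub pieces lie in another, innermost, bigon that is forced anyway. This adds a fixed additive constant per edge to the pinning number, with no choice involved. Call this forced cost $c(G)$. The reduction then asks whether $\varpi(\gamma_G)\le k+c(G)$.

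\textbf{Correctness via the mobidisc formula.} Corollary \ref{cor:pinning-simple-mobidiscs} turns correctness into a statement about the mobidisc formula $\Phi(\gamma_G)$. After pruning (Remark \ref{rem:prune}), its clauses should be of two kinds:
\begin{itemize}
\item a two-variable clause $R_u\vee R_v$ for each edge, possibly with a gadget region that can be exchanged for $R_u$ or $R_v$ without loss;
\item singleton clauses coming from the forced regional bigons, each on a region that lies in no other innermost mobidisc.
\end{itemize}
A minimum solution then consists of the forced singletons together with a minimum vertex cover of $G$. So $\varpi(\gamma_G)=c(G)+\tau(G)$, where $\tau(G)$ is the vertex cover number of $G$.

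\textbf{Main obstacle.} The hard step is the geometric verification that no unwanted innermost embedded bigons appear. There are two dangers:
\begin{itemize}
\item bigons formed between curves of different edges meeting at the same hub $c_v$;
\item bigons formed between an edge curve and the hub circle.
\end{itemize}
The second kind is actually useful: the arc of $\alpha_e$ inside the hub disc together with an arc of $c_v$ bounds a bigon. The intention is that this bigon contains $R_v$-type regions, so it duplicates the clause requiring a pin at $v$ and does not create a new constraint.

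First attempt at the design: shrink each hub to a point-like configuration. All $2\deg(v)$ arcs entering the disc bounded by $c_v$ meet in a small ``star''. The central region $R_v$ lies in every bigon that is localized at $v$, so the pruned formula has $R_v$ as the only vertex-side variable. We would then check, case by case on the local picture at a hub, that each innermost bigon either contains $R_v$ or is one of the forced singleton regions.

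Finally, the construction clearly has size polynomial in $|V|+|E|$, and its combinatorial map is computable from the plane embedding. Together with Corollary \ref{cor:pinning-simple-easy}, this gives \textsf{NP}-hardness of $\mathsf{SimplePin}(\Sigma)$.
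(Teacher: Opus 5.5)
There is a genuine gap, and it goes beyond the case check you defer: both devices you use to isolate the edge clauses end up destroying them.

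\textbf{Hub circles.} Each chord of $\alpha_e$ or $\beta_e$ through the disc bounded by $c_v$ cuts that disc into two embedded bigons with $c_v$, since the two arcs meet only at the two crossing points. These are mobidiscs supported entirely inside the hub of $v$. They do not ``duplicate'' $R_u\vee R_v$. They are new local constraints that force pins in every hub, whether or not $v$ belongs to a cover, and nothing in your design stops those forced pins from landing in lens regions for free.

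This already happens at a vertex of degree one. There $\alpha_e$ and $\beta_e$ cross once inside $c_v$ and cut the hub into quadrants $Q_{AB},Q_A,Q_B,Q_\emptyset$ (inside both discs, inside one, or inside neither). The hub clauses are $Q_{AB}\vee Q_A$, $Q_B\vee Q_\emptyset$, $Q_{AB}\vee Q_B$ and $Q_A\vee Q_\emptyset$. The minimum hitting set $\{Q_{AB},Q_\emptyset\}$ contains the lens quadrant, so $e$ is pinned at no extra cost and the count $\varpi(\gamma_G)=c(G)+\tau(G)$ fails.

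Similar vertex-local bigons arise between curves of different edges at $v$. If every lens at $v$ contains $R_v$, the discs bounded by $\alpha_e$ and $\alpha_{e'}$ overlap near $v$. Neither contains the other, so their boundaries cross, and when they cross twice the overlap is again a bigon localized at $v$. The ``star'' where $2\deg(v)$ arcs meet is also not generic. Resolving it creates many small cells and further candidate bigons, which you would have to control and do not.

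\textbf{Strip neutralization.} This plan works against you. If a region of the lens of $e$ carries a forced pin, the lens clause is automatically satisfied. If a forced regional bigon lies inside the lens, the lens is not innermost and its clause is pruned. Either way the constraint for $e$ disappears. What you need instead is domination: the strip region should lie in no other innermost mobidisc, so that a pin there can be slid to an endpoint region without loss. The crescents $A_e\setminus B_e$ and $B_e\setminus A_e$ are themselves embedded bigons, and you do not account for them either.

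\textbf{How the paper avoids this.} The paper reduces from \textsf{3C3PVC} and uses one strand per \emph{face}, not strands per edge and per vertex. Since $G$ is $3$-connected, hence bridgeless, each face boundary is a simple cycle, and the strand for that face is a loop running just outside it. There are no hub strands. At each degree-$3$ vertex the three face loops bound a triangle. The two loops adjacent to an edge bound an innermost bigon made of triangle, quadrilateral, triangle. Two such bigons share at most one triangle, and the quadrilateral is dominated.

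The cubic hypothesis is essential there. At a vertex of degree $\ge 4$, two non-consecutive face loops would cross twice around the vertex, creating exactly the kind of vertex-local bigon that breaks your construction.
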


\begin{proof}
In light of Remark \ref{rmk:plane_enough}, it is enough to prove this when $\Sigma=\R^2$.

One may show this using a simplified adaption of the reduction from \textsf{PVC} described in \cite[Section 2.2]{simon_stucky_2025pinningidealmultiloop}. It also follows from the proof of Theorem \ref{thm:20-hard} that we will give in Section \ref{sec:20hard}.

Let us sketch yet another reduction from \textsf{3C3PVC} (depending on Lemma \ref{lem:pvc3c_hard}) which we find to be elegant and which is similar to constructions in the next section. 
Consider an instance $(G,k)$ of \textsf{3C3PVC}.
Note that since $G$ is $3$-connected, it is \emph{bridgeless} ($2$-edge connected).

Embed $G$ in $\R^2$ in linear time with straight-line edges (using for instance \cite{Hopcroft-Tarjan_planar-embedding_1974}). 
Since $G$ is bridgeless, every edge bounds different faces on either side, hence the boundary of each face defines a cycle of edges and vertices in which each one appears exactly once. To each face of $G$ associate a loop surrounding only its edges and vertices, so that their union is a simple multiloop whose regions correspond to the vertices and faces of $G$. The innermost bigons correspond to the edges of $G$.
This defines a polynomial time reduction from an instance of $\mathsf{3C3PVC}$ to $\mathsf{SimplePin}(\Sigma)$. See Figure \ref{fig:pentaprism} for an example.

\begin{figure}[H]
    \centering
    \scalebox{.6}{\input{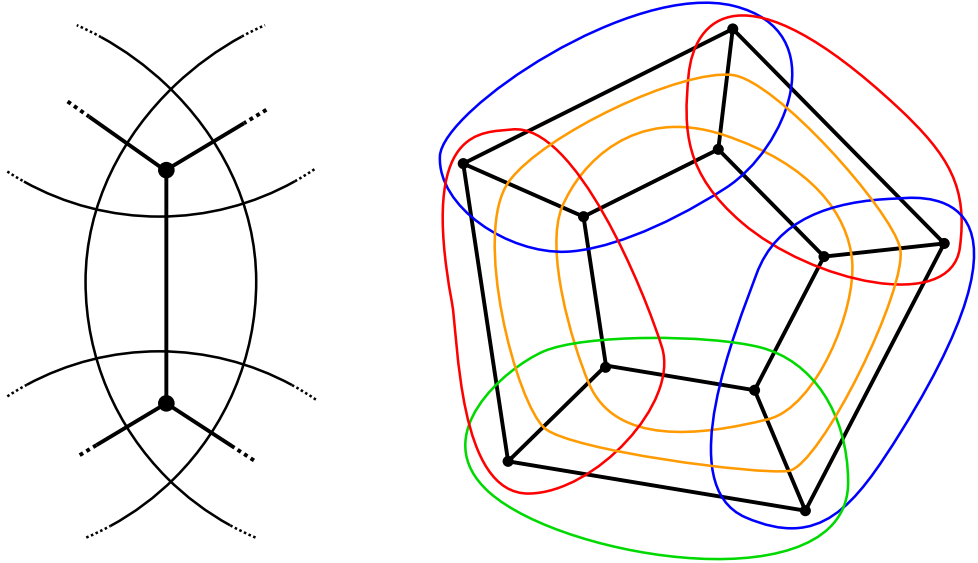}}
    \caption{Left: Local strands around an edge. Right: A $4$-colored multiloop associated to the $1$-skeleton of the pentagonal prism.}
    \label{fig:pentaprism}
\end{figure}

We must now show that $\gamma$ can be pinned with at most $k$ pins if an only $G$ has a vertex cover of size at most $k$.
This will follow from Corollary \ref{cor:pinning-simple-mobidiscs}, after making some observations about the mobidisc formula and its optimal satisfying assigements.
Every innermost bigon of $\gamma$ consist of exactly three regions (two triangular regions corresponding to vertices of $G$ and one quadrangular regions corresponding to the edge between them), hence the mobidiscs of $\gamma$ are in one-to-one correspondence with the edges of $G$. Moreover, any two mobidiscs of $\gamma$ intersect in at most one triangular region corresponding to the vertex of $G$ in that region. Hence the problem of choosing which of the two vertices adjacent to an edge of $G$ belongs to a vertex cover is equivalent to the problem of choosing which of the two triangular regions of the associated mobidisc of $\gamma$ receives a pin (note that optimal pinning sets avoid the quadrangular regions of each mobidisc). 
\end{proof}

The proof of Theorem \ref{thm:simple-no-strand-restiction-hard} reveals an interesting connection to the $4$-color theorem \cite{appel_haken_1976_4color}, to which we will refer again in Section \ref{sec:4-strands} in relation to Conjecture \ref{conj:4-hard}.

\begin{remark}[$4$-color theorem]
\label{rmk:4color}
The construction in the proof of Theorem \ref{thm:simple-no-strand-restiction-hard} gives a way to $4$-color the strands of a $\gamma$ in such a way that no two strands of the same color intersect each other. Indeed, after $4$-coloring the faces of $G$ we may color each strand of $\gamma$ according to the color of the face it surrounds. In fact this may be done in quadratic time \cite{robertson_et_al_1996_efficient4color}.
\end{remark}

We now turn our attention to the complexity of $\mathsf{SimplePin}(\Sigma,s)$ for fixed $s$.

\section{Simple multiloops with few strands}
\label{sec:3easy}

In this section, we show that $\mathsf{SimplePin}(\Sigma,3)$ is in \textsf{P} using a reduction to the \textsf{Bipartite Vertex Cover (BVC)} problem (Definition \ref{def:vc-prob}) which is in \textsf{P}.

\subsection{The \textsf{Bipartite Vertex Cover} problem}

We have the following well-known lemma, whose proof we outline for completeness.

\begin{lemma}[\textsf{BVC} is in \textsf{P}]
\label{lem:bvc_easy}
    The problem \textsf{BVC} is in \textsf{P}.
\end{lemma}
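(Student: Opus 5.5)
The plan is the classical route through K\H{o}nig's theorem and maximum matchings. Given an instance $(G,k)$ with $G=(V,E)$ bipartite, we first compute a bipartition $V=A\sqcup B$ in linear time, by breadth-first search assigning alternating sides. If some edge joins two vertices of the same side, the input is not bipartite and we reject.

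Next we compute a maximum matching $M$ of $G$ in polynomial time. We reduce to a maximum flow problem: add a source $\sigma$ joined to every vertex of $A$, and a sink $\tau$ joined from every vertex of $B$. Orient every edge of $E$ from $A$ to $B$ and give all arcs capacity $1$. By the integrality theorem, Dinitz' algorithm (or Edmonds--Karp) returns an integral maximum flow. The saturated $A\to B$ arcs of this flow form a maximum matching. Dinitz runs in time $O(\Card(V)^2\Card(E))$, and in fact $O(\Card(E)\sqrt{\Card(V)})$ on unit networks.

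We then invoke K\H{o}nig's theorem: in a bipartite graph, the minimum size of a vertex cover equals the maximum size of a matching. The inequality $\tau(G)\ge\nu(G)$ is immediate, since the edges of a matching are disjoint and each needs its own covering vertex. For the reverse inequality we give a construction, which also lets the algorithm output an optimal cover. Let $Z$ be the set of vertices reachable from the unmatched vertices of $A$ by $M$-alternating paths, and set $U=(A\setminus Z)\cup(B\cap Z)$.

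\begin{itemize}
\item $U$ is a vertex cover: an uncovered edge $ab$ would have $a\in Z$ and $b\notin Z$. If $ab\notin M$, the alternating path to $a$ extends through $ab$ to $b$, a contradiction. If $ab\in M$, then $a$ is matched and can only be reached through its partner $b$, so $b\in Z$, again a contradiction.
\item $\Card(U)=\Card(M)$: every vertex of $B\cap Z$ is matched, since otherwise there would be an augmenting path contradicting maximality. Every vertex of $A\setminus Z$ is matched by the definition of $Z$. No matching edge has both endpoints in $U$, because the partner of a vertex of $B\cap Z$ lies in $Z$.
\end{itemize}

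Finally, we answer yes if and only if $\Card(M)\le k$. Every step runs in polynomial time, so \textsf{BVC} is in \textsf{P}.

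The only real content is the equality $\tau=\nu$; I expect its verification, namely the covering argument above, to be the main point. Everything else is a standard citation to flow algorithms.
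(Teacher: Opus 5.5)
Your proposal is correct and follows the paper's route: compute a maximum matching with Dinitz' algorithm, then use K\H{o}nig's theorem and compare the matching size with $k$. The one difference is that you prove the constructive K\H{o}nig step yourself, via the alternating-path cover $U=(A\setminus Z)\cup(B\cap Z)$, whereas the paper cites Bondy--Murty for it.
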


\begin{proof}
    Let $(G,k)$ be a problem instance of \textsf{BVC}. Since $G$ is bipartite, we may use Dinitz' algorithm \cite{dinitz_1970,edmonds_karp_72} to find a maximum matching (a matching of maximum cardinality) of $G$ in polynomial time. Bondy and Murty describe in \cite{bondy1976graph}[Theorem 5.3, pages 74-75] a constructive proof of K\fH{o}nig's theorem converting this matching to a minimum vertex cover (a vertex cover of minimum cardinality) in polynomial time. Denoting by $k'$ the size of this minimum vertex cover, the graph $G$ has vertex cover of size $\le k$ if and only if $k'\leq k$.
\end{proof}

\subsection{Pinning $3$-strand simple multiloops is in \textsf{P}}
\label{subsec:3easy}

We now prove Theorem \ref{thm:3-easy}, which we restate below.

\begin{theorem}
For every orientable surface $\Sigma$ and $s\leq 3$, the problem $\mathsf{SimplePin}(\Sigma,s)$ is in \textsf{P}.
\end{theorem}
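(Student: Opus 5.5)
We plan to use Corollary \ref{cor:pinning-simple-mobidiscs}, which identifies pinning sets of a simple multiloop $\gamma$ with the hitting sets of its (innermost) mobidiscs. Since $\gamma$ is simple, every mobidisc comes from an embedded bigon $I\sqcup J$ between two \emph{distinct} strands $\gamma_i,\gamma_j$. By Lemma \ref{lem:simple_multiloops_have_embedded_monorbigons} we may take this bigon innermost. The case $s=1$ is trivial: there are no double points, so $\varpi=0$. The case $s=2$ will follow from the case analysis below. The main task is $s=3$. First we compute all innermost embedded bigons and their mobidiscs in polynomial time using the naive cubic algorithm mentioned before Corollary \ref{cor:pinning-simple-easy}. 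Then we show that the resulting hypergraph of innermost mobidiscs reduces, in polynomial time, to a bipartite graph whose minimum vertex cover size equals $\varpi(\gamma)$. Lemma \ref{lem:bvc_easy} then finishes the proof.

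The key structural step is to understand the regions inside an innermost embedded bigon $D$ bounded by arcs of $\gamma_1$ and $\gamma_2$, say. No strand crosses itself. An innermost bigon contains no smaller bigon of $\gamma_1,\gamma_2$, so no arc of $\gamma_1$ or $\gamma_2$ can cut through $D$: such an arc would create a smaller bigon. Hence the only arcs inside $D$ belong to the third strand $\gamma_3$. Each such arc runs from side $I$ to side $J$, since an arc of $\gamma_3$ with both endpoints on the same side would bound a smaller bigon with that side. The $\gamma_3$-arcs are pairwise disjoint because $\gamma_3$ is simple. Therefore $D$ is cut into a linear chain of regions $R_0,R_1,\dots,R_m$, and only the two end regions $R_0$ and $R_m$ contain a corner of the bigon. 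Moreover, each consecutive pair of $\gamma_3$-arcs forms a bigon with $\gamma_1$ or $\gamma_2$. Innermostness then forces $m\le 2$. More precisely, a middle region $R_t$ with $0<t<m$ is a quadrilateral bounded by two $\gamma_3$-arcs, one $\gamma_1$-arc and one $\gamma_2$-arc. Two $\gamma_3$-arcs alone would bound a bigon of $\gamma_3$ with $\gamma_1$ only through a region spanning the whole width of $D$. I would try to prove that such a configuration yields a strictly smaller mobidisc, so that innermost mobidiscs consist of $1$, $2$, or $3$ regions, with the $3$-region case being a corner--quadrilateral--corner chain, as in Figure \ref{fig:pentaprism}.

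Next we reduce to graph vertex cover. A $1$-region mobidisc forces a pin; we delete those regions and the clauses they hit, adding $1$ to the count. In a $3$-region mobidisc, the middle region lies in no other innermost mobidisc. Indeed, any other mobidisc containing it would have to cross the same two $\gamma_3$-arcs and hence contain a corner region too. Therefore an exchange argument, replacing a pin in the middle region by a pin in a corner region, lets us drop the middle region, and every remaining clause then has exactly two regions. So the problem becomes vertex cover on a graph $H$ whose vertices are corner regions and whose edges are the remaining mobidiscs. To see that $H$ is bipartite, we 2-colour the regions in a checkerboard fashion. On an orientable surface, each region is coloured by the parity of the number of strands it lies "inside", computed via $\mathbb{Z}/2$ intersection with a generic path. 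Each corner region of a bigon lies on opposite sides of exactly the two strands meeting at the corner. The two ends of a bigon $D$ differ by crossing an odd number of strand arcs when $m$ is odd. Making this parity coherent, presumably by colouring with the pair $(\text{side of }\gamma_a)$ for a chosen strand, is where orientability and the three-strand hypothesis enter.

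I expect the bipartiteness step to be the main obstacle, because the parity must be defined consistently on a general orientable surface where strands need not separate. My first attempt will be to colour each region by whether it lies locally to the left of $\gamma_3$, using orientability to orient $\gamma_3$. Since $\gamma_3$ is the only strand crossing the interior of a mobidisc bounded by $\gamma_1,\gamma_2$, the two ends lie on opposite sides of $\gamma_3$ exactly when one $\gamma_3$-arc separates them, which is the case $m=1$ after the reduction. I will handle bigons bounded by other strand pairs symmetrically. If this global consistency fails, I will instead verify that $H$ has no odd cycles by showing that any such cycle would produce a nested bigon, contradicting innermostness.
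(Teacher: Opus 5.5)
Your outline (innermost bigons, the chain of regions cut out by the third strand, discarding the middle quadrilaterals, vertex cover on the graph $H$ of corner regions, Lemma~\ref{lem:bvc_easy}) matches the paper. The gap is the step that carries the theorem, namely bipartiteness of $H$: you do not prove it, and the colourings you propose fail.

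\textbf{Bipartiteness.} Checkerboard parity is undefined on a general surface, and it fails even in the plane. The two corners of an innermost bigon crossed by $m$ arcs of the third strand are separated by $m$ crossings, and $m$ can be even. In Figure~\ref{fig:loop}, the clause $1\vee 4\vee 9$ comes from a green/blue bigon crossed by two red arcs. So regions $1$ and $9$ get the same checkerboard colour, although $\{1,9\}$ is an edge of $H$.

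Colouring by the side of an oriented $\gamma_3$ also fails. It is undefined for regions not adjacent to $\gamma_3$, and it depends on the direction in which $\gamma_3$ runs along each arc. It fails on the same example: dropping the middle region from the clause does not change the fact that the two corners are separated by two arcs of $\gamma_3$. It also cannot be made consistent with whatever you use for bigons of the other two strand pairs. The ``odd cycle gives a nested bigon'' fallback has no argument behind it yet.

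The missing idea is a \emph{chirality} invariant of the vertices of $H$:
\begin{itemize}
\item Each vertex of $H$ is a triangular region whose three sides lie on the three distinct strands.
\item The orientation of $\Sigma$ therefore induces a cyclic order of the three colours around its boundary, either $(r,g,b)$ or $(r,b,g)$.
\item In the chain $T_x,R_1,\dots,R_n,T_y$ the end triangles are mirror images of each other, so they carry opposite cyclic orders. In Figure~\ref{fig:loop}, reading counterclockwise, region $1$ gives (green, red, blue) and region $9$ gives (blue, red, green).
\end{itemize}
Colouring the vertices by this cyclic order is a proper $2$-colouring of $H$. This is exactly the paper's source/sink orientation of the edges of $H$, and it is where orientability is used, not through strands separating the surface.

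\textbf{The bound $m\le 2$.} Separately, your claim that innermostness forces $m\le 2$ is false, and the inference behind it does not hold. Consecutive $\gamma_3$-arcs are joined \emph{outside} $D$, so any bigon they form with a side of $D$ is not contained in $D$ and does not contradict its innermostness. For a concrete counterexample, let $\gamma_1,\gamma_2$ be two round circles meeting in a lens, and let $\gamma_3$ be a simple curve zigzagging $n$ times across it. The only bigon whose boundary lies in the closed lens is the lens itself, so the lens is innermost with $n-1$ quadrilaterals. This is why the paper allows arbitrary $n$ in Figure~\ref{fig:bigon}.

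This part is repairable. All you need is that every middle quadrilateral $R_t$ lies in no other innermost mobidisc; then all of them can be dropped by your exchange argument. However, your justification (``would have to contain a corner region too'') is not the right reason. The correct argument is as follows. Let $D'\neq D$ be an innermost bigon containing $R_t$; only one strand crosses its interior.
\begin{itemize}
\item If $D'$ is bounded by $\gamma_1,\gamma_3$, say, then both $\gamma_3$-edges of $R_t$ lie on its $\gamma_3$-side. That side must leave $D$ through the $\gamma_2$-side of $D$ after one of these edges and re-enter through it before the other. Together with the $\gamma_2$-edge of $R_t$, it then cuts off a strictly smaller bigon inside $D'$, a contradiction.
\item If $D'$ is bounded by $\gamma_1,\gamma_2$, the chain structure forces $D'=D$.
\end{itemize}
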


\begin{proof}
Let $(\gamma,k)$ be an instance of $\mathsf{SimplePin}(\Sigma,s)$ with $s\leq 3$. We may assume $s=3$ by adding an appropriate number of embedded circles disjoint from the rest of $\gamma$. We will construct a bipartite graph in polynomial time which has a vertex cover of size at most $k'$ (defined below and satisfying $k'\leq k$) if and only if $\gamma$ admits a pinning set of size $\le k$, and appeal to Lemma \ref{lem:bvc_easy} to complete the proof. 

By Corollary \ref{cor:pinning-simple-mobidiscs}, the simple multiloop $\gamma$ is pinned if and only if every one of its innermost bigons receives a pin.
Note that each innermost bigon $B$ of $\gamma$ comprises two arcs of strands $\gamma_1$ and $\gamma_2$ with exactly two intersection points marked as $x$ and $y$. Only arcs of the third strand $\gamma_3$ may run through $B$. Since $B$ is innermost, such arcs of $\gamma_3$ will cross $B$ from an intersection with $\gamma_1$ to an intersection with $\gamma_2$. Since $\gamma_3$ is simple, its arcs must divide $B$ into a sequence of regions $T_x, R_1, R_2, \ldots, R_n, T_y$ for some $n\in \N$ where $T_x$ and $T_y$ are triangular regions and every $R_i$ is a quadrangular region which cannot belong to another innermost bigon. Figure \ref{fig:bigon} depicts such a bigon in $\gamma$ bounded by arcs of $\gamma_1$ and $\gamma_2$ and its intersection with $\gamma_3$.

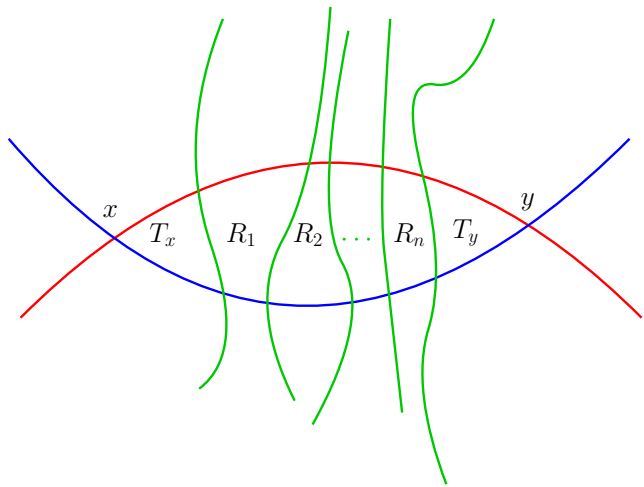
\begin{figure}[H]
    \centering
    \scalebox{.6}{\definecolor{c00c800}{RGB}{0,200,0}

\def \globalscale {1.000000}
\begin{tikzpicture}[y=1cm, x=1cm, yscale=\globalscale,xscale=\globalscale, every node/.append style={scale=\globalscale}, inner sep=0pt, outer sep=0pt]
  %path1
  \path[draw=red,line cap=butt,line join=miter,line width=0.0529cm,miter limit=10.0] (0.3043, 3.7439) .. controls (4.8904, 8.33) and (9.4765, 8.33) .. (14.0626, 3.7439);

  %path2
  \path[draw=blue,line cap=butt,line join=miter,line width=0.0529cm,miter limit=10.0] (0.0397, 7.7126) .. controls (4.273, 2.7737) and (8.8591, 2.7737) .. (13.798, 7.7126);

  %path4
  \path[draw=c00c800,line cap=butt,line join=miter,line width=0.0529cm,miter limit=10.0] (4.2598, 2.1696) .. controls (4.9653, 2.6987) and (5.0535, 3.7571) .. (4.5244, 5.3446) .. controls (3.9952, 6.9321) and (4.0834, 8.6078) .. (4.789, 10.3717);

  %path6
  \path[draw=c00c800,line cap=butt,line join=miter,line width=0.0529cm,miter limit=10.0] (6.3765, 1.905) .. controls (5.6709, 3.3161) and (5.5827, 4.5067) .. (6.1119, 5.4769) .. controls (6.641, 6.447) and (6.9938, 8.1668) .. (7.1702, 10.6363);

  %path8
  \path[draw=c00c800,line cap=butt,line join=miter,line width=0.0529cm,miter limit=10.0] (6.7733, 1.3758) .. controls (7.6553, 2.9633) and (7.8758, 4.154) .. (7.4348, 4.9477) .. controls (6.9938, 5.7415) and (7.0379, 7.4612) .. (7.5671, 10.1071);

  %path10
  \path[draw=c00c800,line cap=butt,line join=miter,line width=0.0529cm,miter limit=10.0] (9.7367, 0.0529) .. controls (9.2075, 1.464) and (9.0752, 2.6106) .. (9.3398, 3.4925) .. controls (9.6044, 4.3744) and (9.5603, 5.521) .. (9.2075, 6.9321) .. controls (8.8547, 8.3432) and (8.9429, 9.0047) .. (9.4721, 8.9165) .. controls (10.0013, 8.8283) and (10.4422, 9.3133) .. (10.795, 10.3717);

  %path28
  \path[draw=c00c800,line cap=butt,line join=miter,line width=0.0529cm,miter limit=10.0] (8.7577, 1.6404) .. controls (8.5813, 3.2279) and (8.449, 4.4185) .. (8.3608, 5.2123) .. controls (8.2726, 6.006) and (8.3167, 7.7258) .. (8.4931, 10.3717);

  \tikzstyle{every node}=[font=\fontsize{18}{18}\selectfont]
%text41
  \node[anchor=south west] (text41) at (3.1621, 5.3346){$T_x$};

  %text42
  \node[anchor=south west] (text42) at (4.8554, 5.3346){$R_1$};

  %text43
  \node[anchor=south west] (text43) at (6.3371, 5.3346){$R_2$};

  %text44
  \node[text=c00c800,anchor=south west] (text44) at (7.3954, 5.3346){$\cdots$};

  %text45
  \node[anchor=south west] (text45) at (8.5596, 5.3346){$R_n$};

  %text46
  \node[anchor=south west] (text46) at (9.8825, 5.3346){$T_y$};

  %text47
  \node[anchor=south west] (text47) at (2.1285, 5.9719){$x$};

  %text48
  \node[anchor=south west] (text48) at (11.3889, 6.1307){$y$};

\end{tikzpicture}}
    \caption{The structure of innermost bigons in $3$-strand simple multiloops}
    \label{fig:bigon}
\end{figure}

We now construct a graph $G$ as follows. It has two vertices and one edge for each innermost (therefore embedded) bigon $B$ of $\gamma$ which is not regional (namely with at least one arc of the third strand passing through it). The vertices of the edge are the triangular regions $T_x$ and $T_y$. Since every triangular region of $\gamma$ belongs to at most $3$ innermost bigons, the vertices have degree $1$, $2$, or $3$. See Figure \ref{fig:loop} for an example of this construction.

\begin{figure}[h]
    \centering
    \scalebox{.6}{\definecolor{c00c800}{RGB}{0,200,0}

\def \globalscale {1.000000}
\begin{tikzpicture}[y=1cm, x=1cm, yscale=\globalscale,xscale=\globalscale, every node/.append style={scale=\globalscale}, inner sep=0pt, outer sep=0pt]
  %rect2
  \path[draw=c00c800,line width=0.0794cm,rounded corners=0.7938cm] (3.1882, 9.0355) rectangle (8.4799, 0.0397);

  %rect3
  \path[draw=red,line width=0.0794cm,rounded corners=0.5953cm] (0.2778, 6.1251) rectangle (14.3007, 2.1564);

  %rect5
  \path[draw=blue,line width=0.0794cm,rounded corners=0.5953cm] (7.157, 7.448) rectangle (11.1257, 1.098);

  %path51
  \path[draw=black,miter limit=10.0] (18.0049, 6.6543) -- (21.9736, 6.6543);

  %ellipse53
  \path[draw=black,fill=white] (17.608, 6.6543) circle (0.3969cm);

  %path57
  \path[draw=black,miter limit=10.0] (22.7674, 6.6543) -- (24.3549, 6.6543);

  %ellipse59
  \path[draw=black,fill=white] (22.3705, 6.6543) circle (0.3969cm);

  %path63
  \path[draw=black,miter limit=10.0] (24.7518, 6.2574) -- (24.7518, 2.8178);

  %ellipse65
  \path[draw=black,fill=white] (24.7518, 6.6543) circle (0.3969cm);

  %path69
  \path[draw=black,miter limit=10.0] (17.608, 2.8178) -- (17.608, 6.2574);

  %path71
  \path[draw=black,miter limit=10.0] (18.0049, 2.4209) -- (21.9736, 2.4209);

  %ellipse73
  \path[draw=black,fill=white] (17.608, 2.4209) circle (0.3969cm);

  %path77
  \path[draw=black,miter limit=10.0] (22.3705, 2.8178) -- (22.3705, 6.2574);

  %ellipse79
  \path[draw=black,fill=white] (22.3705, 2.4209) circle (0.3969cm);

  %path83
  \path[draw=black,miter limit=10.0] (24.3549, 2.4209) -- (22.7674, 2.4209);

  %ellipse85
  \path[draw=black,fill=white] (24.7518, 2.4209) circle (0.3969cm);

  \tikzstyle{every node}=[font=\fontsize{18}{18}\selectfont]
%text92
  \node[anchor=south west] (text92) at (3.79, 8.0974){$1$};

  %text93
  \node[anchor=south west] (text93) at (3.79, 5.3193){$4$};

  %text94
  \node[anchor=south west] (text94) at (0.6679, 5.3193){$3$};

  %text95
  \node[anchor=south west] (text95) at (7.5471, 5.3193){$5$};

  %text96
  \node[anchor=south west] (text96) at (7.5867, 6.629){$6$};

  %text97
  \node[anchor=south west] (text97) at (8.7377, 6.625){$2$};

  %text98
  \node[anchor=south west] (text98) at (8.7509, 5.3159){$7$};

  %text99
  \node[anchor=south west] (text99) at (11.5158, 5.3193){$8$};

  %text100
  \node[anchor=south west] (text100) at (8.7578, 1.5093){$11$};

  %text101
  \node[anchor=south west] (text101) at (7.4614, 1.5093){$10$};

  %text102
  \node[anchor=south west] (text102) at (3.79, 1.5093){$9$};

  %text103
  \node[anchor=south west] (text103) at (17.4822, 2.2634){$9$};

  %text104
  \node[anchor=south west] (text104) at (17.4689, 6.457){$1$};

  %text105
  \node[anchor=south west] (text105) at (22.2314, 6.457){$6$};

  %text106
  \node[anchor=south west] (text106) at (24.6127, 6.457){$2$};

  %text107
  \node[anchor=south west] (text107) at (22.0796, 2.2634){$10$};

  %text108
  \node[anchor=south west] (text108) at (24.4609, 2.2634){$11$};

\end{tikzpicture}}
    \caption{Example of a $3$-strand simple multiloop and the graph whose vertex covers correspond to pinning sets. Its mobidisc formula is $3 \land 8 \land (9 \vee 10) \land (2 \vee 6) \land (10 \vee 11) \land (1\vee 6) \land (1\vee 4 \vee 9) \land (6\vee 5 \vee 10) \land (2\vee 7 \vee 11)$.}
    \label{fig:loop}
\end{figure}
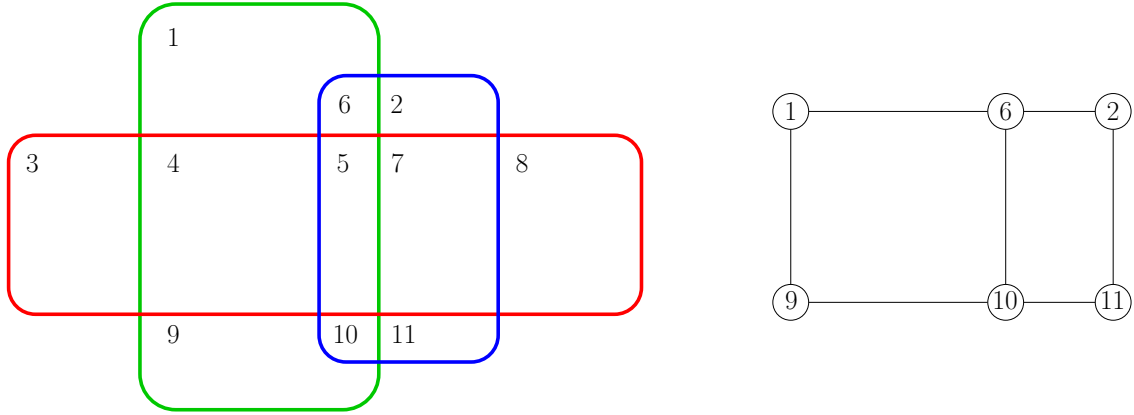

Let us now show that $G$ is bipartite by constructing an orientation of its edges such that every vertex is either a source or sink: the sources and sinks will be independent sets.
For this, we first color the $3$ strands $\gamma_r,\gamma_g,\gamma_b$ by the colors $\{r, g, b\}$. Next, color each edge of $G$ by one of $\{gb, br, rg\}$ according to the colors of the arcs bounding the associated bigon.
Finally, orient the edges of $G$ using the orientation of $\Sigma$ and its coloring so that traveling along it places the first color in its coloring on the left of the traveler. See Figure \ref{fig:orient} to visualize this construction.

\begin{figure}[H]
    \centering
    \scalebox{.6}{\definecolor{lime}{RGB}{0,255,0}
\definecolor{c00c800}{RGB}{0,200,0}

\def \globalscale {1.000000}
\begin{tikzpicture}[y=1cm, x=1cm, yscale=\globalscale,xscale=\globalscale, every node/.append style={scale=\globalscale}, inner sep=0pt, outer sep=0pt]
  %path1
  \path[draw=black,miter limit=10.0] (4.8583, 6.4193) -- (7.9211, 9.4821);

  %ellipse1
  \path[draw=black,fill=black] (4.8022, 6.3632) circle (0.0794cm);

  %ellipse2
  \path[draw=black,fill=black] (7.9772, 9.5382) circle (0.0794cm);

  %path3
  \path[draw=black,miter limit=10.0] (1.6833, 9.4821) -- (4.7461, 6.4193);

  %ellipse3
  \path[draw=black,fill=black] (1.6272, 9.5382) circle (0.0794cm);

  %ellipse4
  \path[draw=black,fill=black] (4.8022, 6.3632) circle (0.0794cm);

  %path5
  \path[draw=black,miter limit=10.0] (4.8022, 2.7384) -- (4.8022, 6.2839);

  %ellipse5
  \path[draw=black,fill=black] (4.8022, 2.6591) circle (0.0794cm);

  %ellipse6
  \path[draw=black,fill=black] (4.8022, 6.3632) circle (0.0794cm);

  %path7
  \path[draw=blue,line width=0.0529cm,miter limit=10.0] (6.1251, 0.5424) .. controls (4.714, 2.1299) and (3.9202, 3.4087) .. (3.7439, 4.3789) .. controls (3.5675, 5.349) and (3.5234, 6.1427) .. (3.6116, 6.7601) .. controls (3.6998, 7.3775) and (4.0084, 7.9507) .. (4.5376, 8.4799) .. controls (5.0668, 9.0091) and (5.7282, 9.4059) .. (6.522, 9.6705) .. controls (7.3157, 9.9351) and (8.9473, 9.891) .. (11.4168, 9.5382);

  %path9
  \path[draw=lime,line width=0.0529cm,miter limit=10.0] (4.273, 0.807) .. controls (5.5077, 1.8653) and (5.9928, 3.541) .. (5.7282, 5.8341) .. controls (5.4636, 8.1271) and (3.5675, 9.7146) .. (0.0397, 10.5966);

  %path11
  \path[draw=red,line width=0.0529cm,miter limit=10.0] (0.8334, 11.3903) .. controls (0.8334, 8.7445) and (1.4949, 7.0247) .. (2.8178, 6.2309) .. controls (4.1407, 5.4372) and (5.5077, 5.4813) .. (6.9189, 6.3632) .. controls (8.33, 7.2452) and (9.3001, 8.7445) .. (9.8293, 10.8611);

  %path13
  \path[draw=black,miter limit=10.0] (5.0668, 6.6278) -- (6.2145, 7.7756);

  %path14
  \path[draw=black,fill=black,miter limit=10.0] (6.3688, 7.9298) -- (6.2659, 7.6211) -- (6.2145, 7.7756) -- (6.06, 7.8269) -- cycle;

  %path15
  \path[draw=black,miter limit=10.0] (4.8022, 5.8341) -- (4.8022, 4.4945);

  %path16
  \path[draw=black,fill=black,miter limit=10.0] (4.8022, 4.2762) -- (4.6567, 4.5672) -- (4.8022, 4.4945) -- (4.9477, 4.5672) -- cycle;

  %path17
  \path[draw=black,miter limit=10.0] (4.5376, 6.6278) -- (3.1253, 8.0402);

  %path18
  \path[draw=black,fill=black,miter limit=10.0] (2.971, 8.1944) -- (3.2798, 8.0915) -- (3.1253, 8.0402) -- (3.0739, 7.8856) -- cycle;

  %path31
  \path[draw=black,miter limit=10.0] (17.8229, 5.8902) -- (20.8857, 8.953);

  %ellipse31
  \path[draw=black,fill=black] (17.7668, 5.8341) circle (0.0794cm);

  %ellipse32
  \path[draw=black,fill=black] (20.9418, 9.0091) circle (0.0794cm);

  %path33
  \path[draw=black,miter limit=10.0] (14.6479, 8.953) -- (17.7107, 5.8902);

  %ellipse33
  \path[draw=black,fill=black] (14.5918, 9.0091) circle (0.0794cm);

  %ellipse34
  \path[draw=black,fill=black] (17.7668, 5.8341) circle (0.0794cm);

  %path35
  \path[draw=black,miter limit=10.0] (17.7668, 2.2093) -- (17.7668, 5.7547);

  %ellipse35
  \path[draw=black,fill=black] (17.7668, 2.1299) circle (0.0794cm);

  %ellipse36
  \path[draw=black,fill=black] (17.7668, 5.8341) circle (0.0794cm);

  %path37
  \path[draw=blue,line width=0.0529cm,miter limit=10.0] (19.0897, 0.0132) .. controls (17.6786, 1.6007) and (16.8848, 2.8795) .. (16.7084, 3.8497) .. controls (16.532, 4.8198) and (16.488, 5.6136) .. (16.5761, 6.2309) .. controls (16.6643, 6.8483) and (16.973, 7.4216) .. (17.5022, 7.9507) .. controls (18.0314, 8.4799) and (18.6928, 8.8768) .. (19.4866, 9.1414) .. controls (20.2803, 9.4059) and (21.9119, 9.3618) .. (24.3814, 9.0091);

  %path39
  \path[draw=red,line width=0.0529cm,miter limit=10.0] (17.2376, 0.2778) .. controls (18.4723, 1.3361) and (18.9574, 3.0118) .. (18.6928, 5.3049) .. controls (18.4282, 7.598) and (16.532, 9.1855) .. (13.0043, 10.0674);

  %path41
  \path[draw=c00c800,line width=0.0529cm,miter limit=10.0] (13.798, 10.8611) .. controls (13.798, 8.2153) and (14.4595, 6.4955) .. (15.7824, 5.7018) .. controls (17.1053, 4.908) and (18.4723, 4.9521) .. (19.8834, 5.8341) .. controls (21.2945, 6.716) and (22.2647, 8.2153) .. (22.7939, 10.332);

  %path43
  \path[draw=black,miter limit=10.0] (20.148, 8.2153) -- (19.5294, 7.5967);

  %path44
  \path[draw=black,fill=black,miter limit=10.0] (19.0059, 7.0732) -- (19.1089, 7.382) -- (19.1602, 7.2275) -- (19.3147, 7.1761) -- cycle;

  %path45
  \path[draw=black,miter limit=10.0] (17.7668, 2.6591) -- (17.7668, 3.4695);

  %path46
  \path[draw=black,fill=black,miter limit=10.0] (17.7668, 4.5873) -- (17.9123, 4.2963) -- (17.7668, 4.3691) -- (17.6212, 4.2963) -- cycle;

  %path47
  \path[draw=black,miter limit=10.0] (14.5918, 9.0125) -- (15.7395, 7.8616);

  %path48
  \path[draw=black,fill=black,miter limit=10.0] (15.8938, 7.707) -- (15.5853, 7.8105) -- (15.7395, 7.8616) -- (15.7914, 8.0158) -- cycle;

  \tikzstyle{every node}=[font=\fontsize{18}{18}\selectfont]
%text64
  \node[anchor=south west] (text64) at (4.921, 4.8699){$gb$};

  %text65
  \node[anchor=south west] (text65) at (2.0049, 7.9068){$rg$};

  %text66
  \node[anchor=south west] (text66) at (6.4462, 7.3526){$br$};

  %path69
  \path[draw=blue,line cap=butt,line join=miter,line width=0.0529cm,miter limit=10.0] (1.4449, 4.5372).. controls (1.4449, 4.5372) and (0.8577, 4.9448) .. (2.7383, 7.2044).. controls (4.6189, 9.464) and (4.6426, 10.7905) .. (4.6426, 10.7905);

  %path70
  \path[draw=blue,line cap=butt,line join=miter,line width=0.0529cm,miter limit=10.0] (0.2842, 7.1427).. controls (0.2842, 7.1427) and (3.4346, 9.5824) .. (2.7003, 10.4115);

  %path71
  \path[draw=c00c800,line cap=butt,line join=miter,line width=0.0529cm,miter limit=10.0] (9.9958, 7.9717).. controls (7.9047, 7.9784) and (6.1349, 7.6638) .. (7.2245, 10.4825);

  %path72
  \path[draw=c00c800,line cap=butt,line join=miter,line width=0.0529cm,miter limit=10.0] (5.7559, 10.5299).. controls (5.7559, 10.5299) and (5.8269, 7.9244) .. (10.1853, 5.6267);

  %path73
  \path[draw=c00c800,line cap=butt,line join=miter,line width=0.0529cm,miter limit=10.0] (4.9979, 9.914).. controls (4.9979, 9.914) and (5.8033, 5.826) .. (7.6034, 4.3762);

  %path74
  \path[draw=c00c800,line width=0.0529cm,miter limit=10.0] (4.273, 0.807) .. controls (5.5077, 1.8653) and (5.9928, 3.541) .. (5.7282, 5.8341) .. controls (5.4636, 8.1271) and (3.5675, 9.7146) .. (0.0397, 10.5966);

  %path75
  \path[draw=red,line cap=butt,line join=miter,line width=0.0529cm,miter limit=10.0] (18.0555, 9.7216).. controls (18.4955, 6.198) and (19.6843, 5.8841) .. (20.4453, 4.6671);

  %path76
  \path[draw=red,line cap=butt,line join=miter,line width=0.0529cm,miter limit=10.0] (19.3284, 9.9226).. controls (22.1758, 5.7018) and (20.9028, 7.4102) .. (22.3433, 4.8979);

  %path77
  \path[draw=c00c800,line cap=butt,line join=miter,line width=0.0529cm,miter limit=10.0] (14.9067, 3.7924).. controls (14.9067, 3.7924) and (17.486, 5.2998) .. (21.4053, 4.8979);

  %path78
  \path[draw=c00c800,line cap=butt,line join=miter,line width=0.0529cm,miter limit=10.0] (15.6436, 2.486).. controls (15.6436, 2.486) and (20.7353, 3.0555) .. (22.3098, 2.888);

  %path79
  \path[draw=c00c800,line cap=butt,line join=miter,line width=0.0529cm,miter limit=10.0] (13.7342, 2.2515).. controls (17.7668, 3.6878) and (15.2751, 4.3284) .. (21.8073, 4.2949);

  %text79
  \node[anchor=south west] (text79) at (17.925, 3.3615){$gb$};

  %text80
  \node[anchor=south west] (text80) at (15.6969, 6.7159){$rg$};

  %text81
  \node[anchor=south west] (text81) at (19.7677, 7.1671){$br$};

\end{tikzpicture}}
    \caption{Edge labeling and orientation scheme for vertices of degree $3$. Degree 2 and degree 1 vertices are possible as well.}
    \label{fig:orient}
\end{figure}
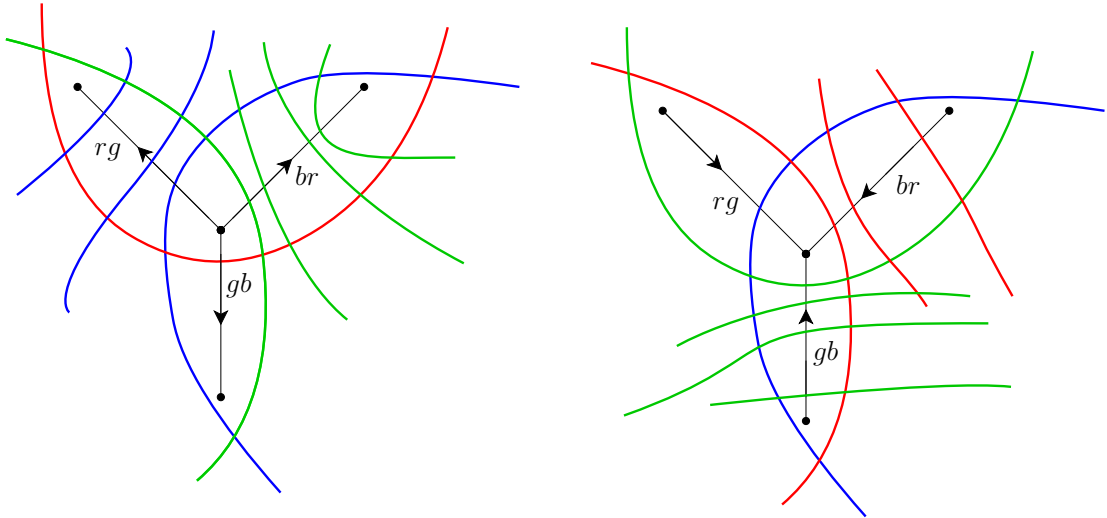

Observe that every vertex of $G$ is either a sink of incoming edges or a source of outgoing edges, depending on whether the cyclic order on the colors of the strands around the region is $(g,r,b)$ or $(b,r,g)$. 
Thus $G$ is bipartite.

Let $k'$ be the nonnegative integer such that there are exactly $k-k'$ regional bigons in $\gamma$. We finally argue that $G$ has a vertex cover of size at most $k'$ if and only if $\gamma$ has a vertex cover of size at most $k$.

Assume that $G$ has a vertex cover of size at most $k'$. Pin $\gamma$ with at most $k$ pins as follows: place $k-k'$ pins in the $k-k'$ regional bigons (whose presence is forced by the pinning property), and the remaining $k'$ pins in the triangular regions of $\gamma$ corresponding to vertices of $G$ belonging to the cover. 
Assume that $\gamma$ can be pinned with $k$ pins. Construct a vertex cover of $G$ of size at most $k'$ as follows: for each of the $k'$ innermost bigons $B$ of $\gamma$ which are not regional, there is a pin in at least one of the $n+2$ regions among $\{T_x, R_1, R_2, \ldots, R_n, T_y\}$ (note that the rectangular regions $R_i$ need not be considered when pinning $\gamma$ optimally, since one may always move their pins to one of the triangular regions $T_x$ or $T_y$ while preserving a pinning set of $\gamma$).
\end{proof}

\subsection{Pinning $4$-strand simple multiloops}
\label{sec:4-strands}

It is natural to ask whether these ideas can be extended to simple multiloops with $4$ strands. We restate Conjecture \ref{conj:4-hard}.

\begin{conjecture}
For every surface $\Sigma$, the problem $\mathsf{SimplePin}(\Sigma,4)$ is \textsf{NP}-complete.
\end{conjecture}

One may hope to prove this conjecture by reduction from \textsf{3C3PVC} as hinted in Theorem \ref{thm:simple-no-strand-restiction-hard} and Remark \ref{rmk:4color}. Given a bridgeless cubic planar graph $G$ and a valid $4$-coloring of the dual graph of $G$, one may obtain a simple multiloop $\gamma$ whose strands are $4$-colored according to the faces of $G$ they surround (in quadratic time). One could then imagine performing surgeries to connect strands of $\gamma$ of the same color, without damaging the mobidisc structure of $\gamma$ too much.

For some additional rationale for this conjecture, note that the combinatorial complexity of mobidiscs for multiloops with $4$ strands is evidently more complex than the $3$-strand case; in particular there is no clear reduction to \textsf{Vertex Cover}. The clause structure of a mobidisc \textsf{CNF} of a $3$-strand simple multiloop is predictable and controlled: after ignoring rectangular regions, the disjunctions may be thought of as having at most $2$ variables each, and the fact that the associated graph is bipartite is equivalent to all cycles having even length. In an earlier attempt at proving Theorem \ref{thm:3-easy}, we also noticed that the graph $G$ produced by the reduction has $\dim(H_1(G;\Z))\le 5$, and we believe that \textsf{Vertex Cover} for graphs of bounded homological rank is in $\textsf{P}$.

Thus, while we believe that the conjecture is true, one may hope to disprove it by bounding the homological rank or finding certain parity structures on appropriate ``hypergraph complexes'' associated to $4$-strand simple multiloops.

\section{Simple multiloops with many strands}
\label{sec:20hard}

In this section, we demonstrate the hardness of pinning simple multiloops with a fixed sufficiently large number of strands, namely we show that for $s\ge 20$ the $\mathsf{SimplePin}(\Sigma,s)$ problem is \textsf{NP}-hard.
Thus (under the assumption that \textsf{P}$\ne$\textsf{NP}) there is no polynomial-time algorithm to compute minimal pinning sets for simple multiloops with many strands.

We prove \textsf{NP}-hardness by reduction from an appropriate variant of the \textsf{Vertex Cover} problem, namely \textsf{3-Connected Cubic Planar Vertex Cover} (\textsf{3C3PVC} in Definition \ref{def:vc-prob}).

\subsection{The \textsf{3-Connected Cubic Planar Vertex Cover} problem}

We now establish that \textsf{3C3PVC} is \textsf{NP}-hard. This was essentially shown in work of Uehara \cite{uehara_cubic_NP_complete_96}, but we add some details for completeness.

\begin{lemma}[hardness of \textsf{3C3PVC}] 
The problem \textsf{3C3PVC} is \textsf{NP}-complete.
\label{lem:pvc3c_hard}
\end{lemma}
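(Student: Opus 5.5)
Membership in \textsf{NP} is immediate, since \textsf{3C3PVC} is a restriction of \textsf{VC}, which is in \textsf{NP} by Lemma \ref{lem:vc_hardness}. For hardness, I will reduce \textsf{PVC} in polynomial time, in two stages: first to cubic planar graphs, then to $3$-connected cubic planar graphs. Each stage uses a local gadget replacement that changes the optimal vertex cover size by an explicitly computable amount. The first stage is essentially Uehara's reduction \cite{uehara_cubic_NP_complete_96}. Our task is mainly to check that planarity and $3$-connectivity survive, or can be arranged.

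\textbf{Stage 1 (to cubic planar).} Starting from a planar instance $(G,k)$, I first normalize the degrees.

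\begin{itemize}[noitemsep]
\item Subdividing an edge twice raises $\tau$ (the minimum vertex cover size) by exactly $1$, and doing so on every edge makes $G$ have minimum degree considerations manageable.
\item I split each vertex of degree $d>3$ into a cycle of degree-$3$ vertices in the planar rotation order. The cycle has even length, and each of its edges is subdivided twice. This preserves $\tau$ up to a known additive constant, by the standard odd-path argument: a path with $2m$ internal vertices contributes exactly $m$ to any optimal cover and lets the endpoints behave as the same variable.
\item Vertices of degree $1$ or $2$ are padded with small planar gadgets whose $\tau$-contribution is fixed. An example is attaching a $K_4$ with one edge subdivided, or a prism gadget.
\end{itemize}

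All these operations are local and planar-embedding preserving, so the result is a cubic planar $G'$ with $\tau(G')=\tau(G)+c$, where $c$ is computable.

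\textbf{Stage 2 (to $3$-connected).} This is the main obstacle. I would sidestep it by making the gadgets of Stage 1 produce $3$-connectivity directly.

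\begin{enumerate}[noitemsep]
\item Reduce to connected $G$, since \textsf{PVC} is hard on connected graphs: join components by double-subdivided edges.
\item Replace every vertex $v$ of $G'$ by a triangle (truncation), and every edge by a ``ladder'' gadget. A ladder is a planar $3$-connected cubic piece, e.g.\ a prism strip of fixed length, attached along three vertex-disjoint paths so that Menger's criterion holds across each gadget boundary.
\item Prove $3$-connectivity via the characterization that a cubic graph is $3$-connected iff it is $3$-edge-connected. Then show that every $2$-edge-cut must lie inside one gadget, which is impossible because each gadget is itself $3$-edge-connected with three attachment edges on each side.
\end{enumerate}

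The delicate part is computing $\tau$ of each gadget conditional on the states of its boundary vertices. I would tabulate the minimum covers of the truncation triangle (always $2$ of its $3$ vertices, with the uncovered one encoding ``$v$ not in cover") and of the ladder gadget with its endpoint constraints. The aim is to verify $\tau(G'')=\tau(G')+a\,|V(G')|+b\,|E(G')|$ for constants $a,b$, by exhibiting a cover transformation in each direction. If the triangle encoding misbehaves, I would fall back to citing Uehara's construction, which already yields cubic planar hardness. For that case I would only add the verification that truncation plus $K_4$-style edge gadgets preserve the affine relation and give $3$-edge-connectivity.

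Finally, the whole construction is polynomial, and planar embeddings are maintained throughout. Hence $(G,k)\mapsto(G'',k+c')$ is a valid reduction, and \textsf{3C3PVC} is \textsf{NP}-complete.
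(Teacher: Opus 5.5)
\textbf{Stage 2 fails, and it is the substance of the lemma.} Truncation does not encode vertex cover.

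\begin{itemize}
\item A triangle has no state that leaves all three of its external edges uncovered. Its cheapest states (cost $2$) already cover two of the three, so it cannot play the role of ``$v\notin$ cover''.
\item In fact, for every connected cubic graph $H$ one has $\tau(T(H))=2|V(H)|$, independently of $\tau(H)$. Leaving one vertex out of each triangle only requires choosing, injectively, an incident edge at each vertex. This is possible because $H$ has at least as many edges as vertices.
\item Covering an attachment vertex can only lower an edge gadget's cost. You give no reason (and it is hard to see one) why the ladders would restore the lost constraint.
\end{itemize}

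The connectivity argument is also inconsistent with the construction. A truncation triangle has a single free port per incident edge. So each edge gadget is attached by exactly one edge on each side, and these two attachment edges form a $2$-edge-cut; there cannot be ``three attachment edges on each side''. Your fallback still relies on truncation, so it does not escape this. What is missing is a connectivity-raising gadget with a verified constant shift of $\tau$. The paper applies Uehara's gadget \cite{uehara_cubic_NP_complete_96} first along every bridge, then along one edge of each remaining $2$-edge-cut (found via Menger's theorem); each application raises $\tau$ by exactly $10$.

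\textbf{The earlier steps also contain errors.}

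\begin{itemize}
\item Your ``odd-path argument'' is misstated. A path with $2m$ internal vertices between $u$ and $w$ costs $m$ when $u$ or $w$ is covered and $m+1$ otherwise. So it acts as the edge $uw$ (an OR-constraint), not as an identification of $u$ and $w$.
\item Consequently, your even cycle of copies with doubly-subdivided edges behaves like an even cycle of OR-constraints. Its cheapest states are the alternating ones, which cover only half of the external edges. Meanwhile ``all copies covered'' and ``all copies uncovered'' cost the same. So the copies are not forced to agree, and the unit cost of putting $v$ in the cover is lost. The standard fix is a path $v_1w_1v_2\cdots w_{d-1}v_d$ with the external edges at the $v_i$, which adds $d-1$.
\item Joining components by a doubly-subdivided edge adds an OR-constraint between the two chosen vertices. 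Then $\tau$ grows by $1$ or $2$ depending on whether some minimum cover contains one of them, which is not a computable constant.
\end{itemize}

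You can avoid degree reduction altogether by starting, as the paper does, from \textsf{Max-degree 3 Planar Vertex Cover} \cite{Garey-Johnson_rectilinear-Steiner-tree_1977}. Your padding of degree-$1$ and degree-$2$ vertices by pendant copies of $K_4$ with one subdivided edge, attached at the subdivision vertex, is correct: each copy adds exactly $3$. It is a valid alternative to the paper's pruning of leaves and diamond replacement of degree-$2$ vertices. Components, however, must be joined by a gadget that keeps the original edge constraints at a fixed extra cost, such as the paper's, which adds exactly $3$ per component.
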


\begin{proof}
    Note that the problem is in \textsf{NP} as a special case of \textsf{Vertex Cover}.
    We show that is \textsf{NP}-hard by reduction from \textsf{Max-degree 3 Planar Vertex Cover} (the restriction of \textsf{Vertex Cover} to planar graphs of max-degree $3$), which problem is shown to be \textsf{NP}-hard in \cite[Lemma 1]{Garey-Johnson_rectilinear-Steiner-tree_1977}.
    Let $(G,k)$ be an instance of \textsf{Max-degree 3 Planar Vertex Cover} (so $G$ is a planar graph of max-degree $3$ and $k\in\Z$).

    \paragraph{Passing to a planar graph of max-degree $3$ and min-degree $2$.} We first construct a graph $G'$ from $G$ which is planar of max-degree $3$ and min-degree $2$. We do this by iteratively deleting a subgraph of $G$ induced by an edge $(v,w)$ where $v$ has degree $1$. The reader may check that each time this procedure is performed, the size of the vertex cover must be decreased by $1$. In other words, if this procedure is performed $n$ times before it terminates in a planar graph $G'$ with max-degree $3$ and min-degree $2$, then $G$ has a vertex cover of size at most $k$ if and only if $G'$ has a vertex cover of size at most $k'=k-n$.   
    
    \paragraph{Passing to a connected planar graph of max-degree $3$ and min-degree $2$.} Suppose now that $G'$ has $n$ components. We next construct a connected graph $G''$ which has a vertex cover of size at most $k''=k'+3n$ if and only if $G'$ has a vertex cover of size at most $k'$.

    We may assume that every connected component of $G'$ has at least one edge (since an isolated vertex of $G'$ never belongs to a minimal vertex cover, so discarding them yields a graph with the same minimal vertex covers as $G'$), and we may also assume $n>0$.

    We construct $G''$ from $G'$ as follows (see Figure \ref{fig:may_assume_connected}).
    For each connected component $G_i$ of $G'$, choose an edge $e_i$ and let $v_i,w_i$ be its endpoints. Subdivide $e_i$ by adding two vertices $x_i,y_i$, and add another vertex $z_i$ connected to $x_i$ and $y_i$ to form a triangle. Let $C_{2n}$ be a cycle of length $2n$ with vertices labeled $b_1,c_1,\ldots b_n,c_n$ in this cyclic order.
    Add edges connecting each $z_i$ to $b_i$. Finally, subdivide each edge connecting $z_i$ to $b_i$ by adding a vertex $a_i$.
    
    This achieves the construction of $G''$. The reader may check that it is planar and connected with max-degree $3$ and min-degree $2$, and it contains all vertices of $G'$.

    \begin{figure}[H]
    \centering
    \scalebox{0.8}{\input{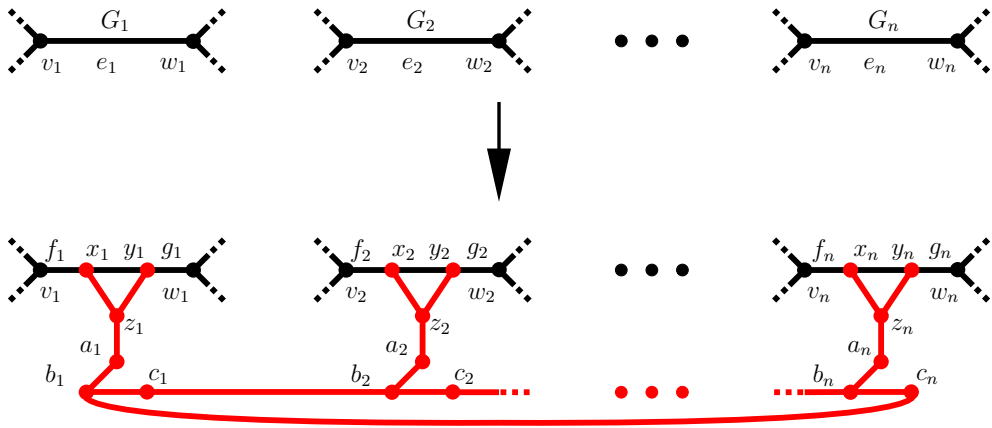}}
    \caption{From disconnected $G'$ (top) to connected $G''$ (bottom).}
    \label{fig:may_assume_connected}
    \end{figure}

    Now we show that $G''$ has a vertex cover of size at most $k''=k'+3n$ if and only if $G'$ has vertex cover of size at most $k'$.
    
    Suppose that $G'$ has a vertex cover of size at most $k'$ and let us build a cover of size at most $k''$ of $G''$ containing all vertices from the initial cover of $G'$. For each $i$, at least one of $v_i,w_i$ belong to the cover of $G'$: if $v_i$ does then we add $y_i,z_i,b_i$ to the cover of $G''$ but otherwise $w_i$ does so we add $x_i,z_i,b_i$. 
    This yields a vertex cover of $G''$ by $k''$ vertices.
    
    Suppose $G''$ has a vertex cover of size at most $k''$. For each $i$, at least $3$ vertices from $\{x_i,y_i,z_i,a_i,b_i,c_i\}$ must belong to the cover.
    We argue that any cover of $G''$ can be modified to obtain a cover with the same cardinal such that it contains for each $i$ at least one of the vertices $v_i$ or $w_i$.

    For each $i$, we modify the cover according to the following dichotomy.
    If neither $v_i$ nor $w_i$ belong to the cover, then both $x_i$ and $y_i$ belong to the cover, and the cover must contain at least $2$ other vertices from the set $\{z_i,a_i,b_i,c_i\}$: we thus replace these $4$ vertices by $\{v_i,y_i,z_i,b_i\}$, hence covering all edges in the subgraph induced by $\{v_i,w_i,x_i,y_i,z_i,a_i,b_i,c_i\}$. 
    If at least one of $v_i$ or $w_i$ belongs to the cover, then we replace any vertices from the subset $\{z_i,a_i,b_i,c_i\}$ occurring in the cover (at least two of which must belong to the cover) with the vertices $z_i$ and $b_i$. 

    This completes the modification of the cover.
    By construction, this new vertex set covers the edges of the graph induced by $\{v_i,w_i,x_i,y_i,z_i,a_i,b_i,c_i\}$.
    Note that since each $b_i$ belongs to the cover, the even cycle $C_{2n}$ remains covered after this modification.
    Denote by $H$ the subgraph of $G''$ induced by $\bigcup_{i=1}^n\{x_i,y_i,z_i,a_i,b_i,c_i\}$ (in red on Figure \ref{fig:may_assume_connected}), and denote $f_i=(v_i,x_i)$ and $g_i=(w_i,y_i)$ the edges of $G''$.
    From the modified cover, the subgraph $H$ contains at least $3n$ vertices and the subgraph $G''\setminus (H\cup\bigcup_{i=1}^n\{f_i,g_i\})$ is covered by a set of at most $k'$ vertices which for every $i$ contains $v_i$ or $w_i$. This shows that $G'$ admits a vertex cover of size at most $k'$.

    \paragraph{Passing to a connected cubic planar graph.} We now construct a graph $G'''$ from $G''$ which is cubic. Every vertex of $G'$ has degree $2$ or $3$. We replace each vertex of degree $2$ with a square and diagonal (see Figure \ref{fig:make_cubic}). Let $n$ be the number of times that we made this replacement to produce $G'''$. The reader may check that $G'''$ is cubic, connected, and has a vertex cover of size at most $k'''=k''+2n$ if and only if $G''$ has a vertex cover of size at most $k''$.

    \begin{figure}[H]
    \centering
    \scalebox{1.5}{\def \globalscale {1.000000}
\begin{tikzpicture}[y=1cm, x=1cm, yscale=\globalscale,xscale=\globalscale, every node/.append style={scale=\globalscale}, inner sep=0pt, outer sep=0pt]
  \begin{scope}[shift={(-4.9929, 9.144)}]%% layer1
    \begin{scope}[cm={ -1.0,-0.0,0.0,-1.0,(10.16, -16.256)}]%% g21
      %path1
      \path[draw=black,line cap=butt,line join=miter,line width=0.0765cm] (5.08, -7.62) -- (5.08, -8.128) -- (5.08, -8.636);

      %path8
      \path[draw=black,fill=black,even odd rule,line cap=round,line width=0.0cm] (5.08, -8.128) circle (0.0871cm);

      %path16
      \path[draw=black,line cap=butt,line join=miter,line width=0.0765cm,dash pattern=on 0.0383cm off 0.0383cm] (5.08, -7.366) -- (5.08, -7.62);

      %path17
      \path[draw=black,line cap=butt,line join=miter,line width=0.0765cm,dash pattern=on 0.0383cm off 0.0383cm] (5.08, -8.89) -- (5.08, -8.636);

    \end{scope}
    %path2
    \path[draw=black,line cap=butt,line join=miter,line width=0.0765cm,miter limit=4.0] (7.366, -8.89) -- (7.366, -8.636);

    %path3
    \path[draw=red,line cap=butt,line join=miter,line width=0.0765cm,miter limit=4.0] (7.366, -8.636) -- (6.604, -8.128);

    %circle9
    \path[draw=red,fill=red,even odd rule,line cap=round,line width=0.0cm,rotate around={-180.0:(0.0, 2.032)}] (-6.604, 12.192) circle (0.0871cm);

    %path18
    \path[draw=black,line cap=butt,line join=miter,line width=0.0765cm,dash pattern=on 0.0383cm off 0.0383cm] (7.366, -9.144) -- (7.366, -8.89);

    %path19
    \path[draw=black,line cap=butt,line join=miter,line width=0.0765cm,dash pattern=on 0.0383cm off 0.0383cm] (7.366, -7.112) -- (7.366, -7.366);

    \begin{scope}[shift={(0.5972, -0.3762)}]%% g1
      %path121-0
      \path[draw=black,line cap=butt,line join=miter,line width=0.048cm,miter limit=10.0,cm={ 0.2646,-0.0,-0.0,0.2646,(-4.6223, -4.073)}] (35.968, -13.904) -- (38.144, -13.904);

      %path123
      \path[fill=black,even odd rule,cm={ 0.2646,-0.0,-0.0,0.2646,(-4.6223, -4.073)}] (37.152, -13.68) -- (38.144, -13.904) -- (37.152, -14.128) -- cycle;

      %path125
      \path[draw=black,line cap=butt,line join=miter,line width=0.048cm,miter limit=10.0,cm={ 0.2646,-0.0,-0.0,0.2646,(-4.6223, -4.073)}] (37.152, -13.68) -- (38.144, -13.904) -- (37.152, -14.128);

    \end{scope}
    %path12
    \path[draw=black,line cap=butt,line join=miter,line width=0.0765cm,miter limit=4.0] (7.366, -7.62) -- (7.366, -7.366);

    %circle16
    \path[draw=red,fill=red,even odd rule,line cap=round,line width=0.0cm,rotate around={-180.0:(0.0, 2.032)}] (-8.128, 12.192) circle (0.0871cm);

    %path22
    \path[draw=red,line cap=butt,line join=miter,line width=0.0765cm,miter limit=4.0] (8.128, -8.128) -- (7.366, -8.636);

    %path23
    \path[draw=red,line cap=butt,line join=miter,line width=0.0765cm,miter limit=4.0] (7.366, -7.62) -- (6.604, -8.128);

    %path24
    \path[draw=red,line cap=butt,line join=miter,line width=0.0765cm,miter limit=4.0] (8.128, -8.128) -- (7.366, -7.62);

    %circle8
    \path[draw=red,fill=red,even odd rule,line cap=round,line join=miter,line width=0.0cm,miter limit=4.0,rotate around={-180.0:(0.0, 2.032)}] (-7.366, 12.7) circle (0.0871cm);

    %circle1
    \path[draw=red,fill=red,even odd rule,line cap=round,line join=miter,line width=0.0cm,miter limit=4.0,rotate around={-180.0:(0.0, 2.032)}] (-7.366, 11.684) circle (0.0871cm);

    %path25
    \path[draw=red,line cap=butt,line join=miter,line width=0.0765cm,miter limit=4.0] (8.128, -8.128) -- (6.604, -8.128);

  \end{scope}

\end{tikzpicture}}
    \caption{From $G''$ to cubic $G'''$.}
    \label{fig:make_cubic}
    \end{figure}
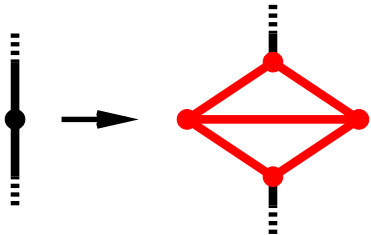

    \paragraph{Passing to a $3$-connected cubic planar graph.} We finally apply Uehara's gadget which is shown in Figure \ref{fig:uehara_gadget} as many times as needed until we have a $3$-connected graph $G''''$, starting with edges of $G'''$ which are bridges (edges whose removal disconnects the graph). This creates an intermediate graph that is $2$-connected. Then we repeatedly apply it once along one edge from a pair of edges whose removal disconnects the graph (such pairs can be detected in polynomial time using Menger's theorem). The reader may check that this process terminates in polynomial time.
    
    \begin{figure}[H]
    \centering
    \scalebox{1.5}{\input{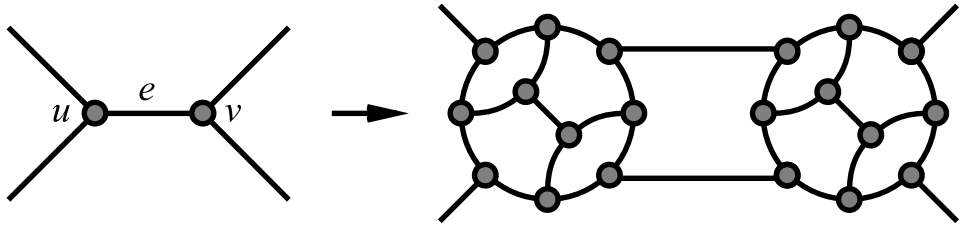}}
    \caption{From $G'''$ to $G''''$ via the Uehara connectivity-increasing gadget (reproduced from \cite{uehara_cubic_NP_complete_96} with permission).}
    \label{fig:uehara_gadget}
    \end{figure}
    
    Uehara \cite{uehara_cubic_NP_complete_96} shows that if this gadget was added $n$ times to form $G''''$, then $G''''$ has a vertex cover of size at most $k''''=k'''+10n$ if and only if $G'''$ has a vertex cover of size at most $k'''$. \end{proof}

\subsection{Planar graph embeddings with few slopes}

To prove Theorem \ref{thm:20-hard}, we will reduce from \textsf{3C3PVC}. 
The first step of the reduction is critically important to our construction and relies on the following lemma.

\begin{lemma}[embedding $3$-connected cubic planar graphs with exactly $6$ slopes]
\label{lem:embed_with_4_slopes}
    Every $3$-connected cubic planar graph $G$ has a straight-line planar embedding, computable in linear time, in which the union of edge slopes is a $6$-element set $\{\frac{\pi}{4}, \frac{\pi}{2}, \frac{3\pi}{4},\phi_1,\phi_2,\phi_3\}$ for some $\phi_1,\phi_2,\phi_3\in [0,\pi)$, and each of $\phi_1, \phi_2, \phi_3$ is realized exactly once by a single edge on the outer face of $G$. Additionally, the $3$ edges with slopes $\phi_1, \phi_2, \phi_3$ occur in succession on the outside face. Moreover, one can arrange that the embedding is isotopic to any given planar embedding of $G$. In particular one can specify the outer face in advance.
\end{lemma}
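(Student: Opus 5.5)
This lemma is essentially a repackaging of the theorem of Dujmovi\'{c}, Eppstein, Suderman and Wood \cite{dumovic_et_al_few_slopes_07}. They prove that every $3$-connected cubic plane graph has a plane straight-line drawing, computable in linear time, in which every edge has slope in $\{\frac{\pi}{4},\frac{\pi}{2},\frac{3\pi}{4}\}$ except for three edges on the outer face. So the plan is to invoke their theorem and then check the extra combinatorial claims against the structure of their construction, rather than re-derive the drawing.

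First I will recall how their algorithm works. It uses a canonical ordering of the $3$-connected plane graph in the sense of de Fraysseix--Pach--Pollack and Kant, taken with respect to a chosen outer face and a chosen edge $v_1v_2$ on it. The vertices are then added chain by chain, and each new chain is attached using edges of slopes $\frac{\pi}{4}$, $\frac{\pi}{2}$, $\frac{3\pi}{4}$; since $G$ is cubic, each vertex has at most one ``downward'' and at most two ``upward'' neighbours, and the coordinates are chosen so that every inserted edge gets one of the three slopes. The exceptional edges are the base edge $v_1v_2$ together with the two edges joining the last vertex $v_n$ to the outer boundary. Let me check which edges these are. In the canonical ordering $v_1,v_2,v_n$ lie on the outer face, $v_1v_2$ is an outer edge, and in a cubic graph $v_n$ has exactly two outer-face neighbours. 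For the three special edges to be consecutive on the outer face, I will choose the canonical ordering so that $v_n$ is adjacent on the outer face to $v_1$ or $v_2$. Then the edges $v_n v_1$, $v_1 v_2$, and $v_2 w$ (or the symmetric triple) occur in succession. I expect this step to be the main obstacle. I will try to settle it by re-reading the construction in \cite{dumovic_et_al_few_slopes_07}, where, as I recall, the last vertex is placed so that its two remaining edges reach $v_1$ and $v_2$ (the drawing closes up a ``roof'' over the base edge). If that is not literally the case, I will modify the last step by choosing $v_n$ to be an outer neighbour of $v_1$. This is permitted because the canonical ordering can start from any outer edge $v_1v_2$ with any choice of $v_n$ on the outer face not in $\{v_1,v_2\}$. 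The same flexibility should allow the outer edge $v_1 v_n$ to be drawn with a non-standard slope while everything else is unchanged.

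Second, the claim that there are \emph{exactly} six slopes, each $\phi_i$ realized once, needs a small perturbation argument. If some $\phi_i$ happens to coincide with one of $\frac{\pi}{4},\frac{\pi}{2},\frac{3\pi}{4}$ or with another $\phi_j$, I will slightly move the outer vertices involved (e.g.\ $v_n$ vertically) so that the three special slopes become pairwise distinct and distinct from the standard ones. Planarity is an open condition, so a small move preserves it, and it does not change the other edge slopes as long as the moved vertex is incident only to special edges or to edges whose other endpoint we move along. If a statement of the form ``every edge has slope in the set'' is required instead, one simply relabels, since the lemma only asserts the slope set is $\{\frac{\pi}{4},\frac{\pi}{2},\frac{3\pi}{4},\phi_1,\phi_2,\phi_3\}$.

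Finally, for isotopy: the canonical ordering is computed from a fixed combinatorial embedding (rotation system plus outer face), and the drawing produced realizes exactly that rotation system. By Whitney's theorem a $3$-connected planar graph has a unique embedding in $\Sph^2$ up to reflection, so prescribing the outer face determines the planar embedding up to reflection, which we may also apply to the drawing. Linear time follows from linear-time canonical ordering and the linear-time drawing step in \cite{dumovic_et_al_few_slopes_07}.
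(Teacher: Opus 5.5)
Your proposal follows the paper's proof. You apply \cite[Theorem 24]{dumovic_et_al_few_slopes_07} to a Kant canonical decomposition of the given plane graph, identify the exceptional edges as the base edge $v_1v_2$ and the two outer edges at the last vertex $v_n$, and move $v_n$ vertically so that the three exceptional slopes are pairwise distinct and distinct from $\frac{\pi}{4},\frac{\pi}{2},\frac{3\pi}{4}$. Your handling of isotopy is more explicit than the paper's and is fine: fix the rotation system and outer face, and reflect if needed, since reflection preserves $\{\frac{\pi}{4},\frac{\pi}{2},\frac{3\pi}{4}\}$. A few details should be corrected.

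\emph{Consecutiveness is already built in.} The step you flag as the main obstacle needs no work. The canonical decomposition used by Dujmovi\'{c} et al.\ requires $V_K=\{v_n\}$ with $v_n$ on the outer face and $v_1v_n\in E(G)$. A $3$-connected plane graph has no chords of its outer cycle, so $v_1v_n$ is an outer edge. The three special edges are therefore $v_2v_1$, $v_1v_n$, $v_nw$, with $w$ the other outer neighbour of $v_n$; you wrote $v_2w$ for the last one. They are consecutive with no change to the construction.

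\emph{Two misstatements about the construction.} Your recollection that both remaining edges of $v_n$ reach $v_1$ and $v_2$ cannot be right in general, since it would force the outer face to be the triangle $v_1v_2v_n$. You also do not need, and should not claim without proof, that $v_n$ may be an arbitrary outer vertex. Separately, your description of the ordering is reversed. In a cubic graph, every vertex outside $V_1\cup V_K$ has exactly one neighbour in a later part and at most two in earlier parts. This is harmless only because you cite the drawing rather than rebuild it.

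\emph{Linear time needs an argument.} You cannot attribute linear time to \cite{dumovic_et_al_few_slopes_07}, which contains no complexity analysis. You need Kant's linear-time canonical decomposition \cite{Kant1996}, together with an actual check that their recursive construction runs in linear total time. This is exactly the point the paper covers by a careful reading of their proof.
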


\begin{proof}
This lemma is proved in \cite{dumovic_et_al_few_slopes_07} without an explicit complexity analysis. The first step is to compute a so-called canonical decomposition of $G$ based on an initial planar embedding, which may be done in linear time \cite{Kant1996}[Theorem 3.2] (an initial planar embedding may be computed in linear time by \cite{Hopcroft-Tarjan_planar-embedding_1974}). A careful reading of the recursive construction in the proof of \\ \cite{dumovic_et_al_few_slopes_07}[Theorem 24] reveals that a drawing of $G$ may be computed in linear time as well. In the very last step, it is sometimes possible that we may be able finish the drawing so that fewer than $6$ slopes are used, but vertically perturbing the final vertex allows us to realize exactly $6$ slopes. This last step also shows that the $3$ edges with slopes $\phi_1, \phi_2, \phi_3$ occur in succession on the outside face.
\end{proof}

\subsection{Pinning simple multiloops with \texorpdfstring{$20$}{20} strands is \textsf{NP}-complete}
\label{subsec:20hard}

Given an instance $(G,k)$ of \textsf{3C3PVC} we will construct an instance $(\gamma,k')$ of $\mathsf{SimplePin}(\Sigma,20)$ in polynomial time, and show that they are equivalent problems.

\subsubsection{Construction summary}

Roughly speaking, we will embed $G$ in a small open set homeomorphic to the plane and then meticulously assemble a simple multiloop $\gamma$ with $20$ strands around the graph in such a way that each edge of the graph is contained in a thin bigon, so that the innermost-mobidisc formula for $\gamma$ is predictable from the structure of $G$.

Here is a summary of the entire construction. We work in the Euclidean plane $\R^2$ with distance function $d$. For a subset $X\subset \R^2$ and $\epsilon\in\,\R_+$ we denote by $\mathbb{B}(X,\epsilon)$ the open $\epsilon$-neighborhood of $X$.

\begin{enumerate}
    \item  Use Lemma \ref{lem:embed_with_4_slopes} to construct a straight-line embedding of $G$ in $\R^2$ in polynomial time in which the union of edge slopes is a $6$-element set $\{\frac{\pi}{4}, \frac{\pi}{2}, \frac{3\pi}{4},\phi_1,\phi_2,\phi_3\}$ for some $\phi_1,\phi_2,\phi_3\in [0,\pi)$, and each of $\phi_1$, $\phi_2$ and $\phi_3$ is realized exactly once by a single edge on the outer face of $G$. See Figure \ref{fig:construction_summary} left and middle. A schematic of the entire construction is shown in Figure \ref{fig:construction_summary} at right. Scale the embedding so that $\mathbb{B}(0,\frac{1}{2})$ contains $G$ as well as all intersections between all lines defined by distinct edges of $G$. 

    \begin{figure}[H]
    \centering
    \scalebox{0.44}{\input{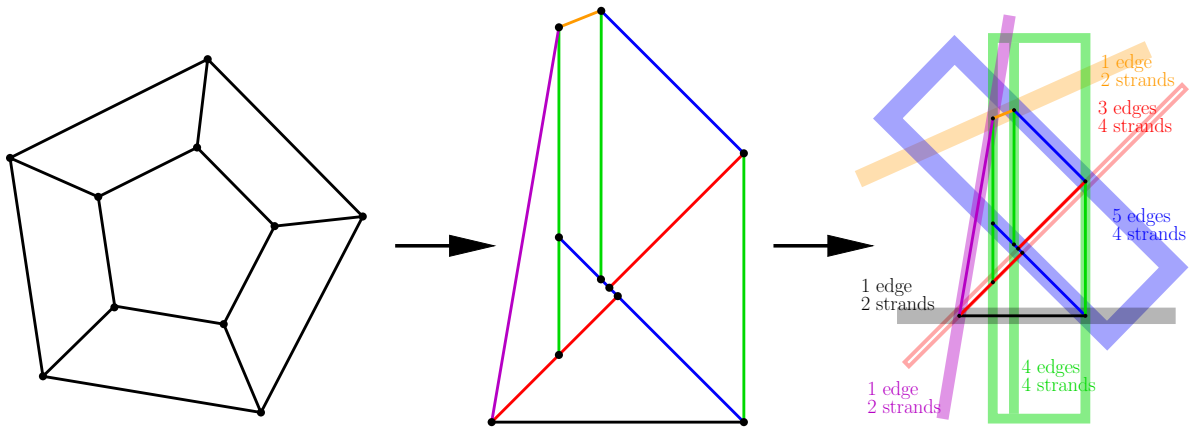}}
    \caption{Left: Input 3-connected cubic planar graph $G$. Middle: A drawing of $G$ with $6$ slopes from the algorithm in \cite{dumovic_et_al_few_slopes_07}. Right: A ``plumbing'' scheme to construct the $20$-strand simple multiloop $\gamma$ associated to $G$ (without the rescaling step).
    For each slope (color), we see a neighborhood of the (at most $4$) strands associated to that slope, which will contain a ``plumbing'' gadget as in Figures \ref{fig:alphaonly}, \ref{fig:bothstrands}, \ref{fig:betaonly}, \ref{fig:nostrands}. We do not show that the two anchor strands will surround the multiloop as in Figure \ref{fig:edge_gadget_bundle}.}
    \label{fig:construction_summary}
    \end{figure}
    
    \item Each edge will be encased in a thin bigon called the edge gadget (see Figure \ref{fig:edgegadget}) which will use two strands of $\gamma$. Each set of edges which are collinear will give rise to a sequence of edge gadgets alternating between the two types ($\alpha$ and $\beta$) to form the edge gadget bundle which bisects the unit circle along the line defined by the edges (see Figure \ref{fig:edge_gadget_bundle}).

    \item The edges of $G$ of a given slope $m\in\{\frac{\pi}{4}, \frac{\pi}{2}, \frac{3\pi}{4},\phi_1,\phi_2,\phi_3\}$ are partitioned into sets of collinearity classes, each with an edge gadget bundle consisting of $2$ or $4$ strands of $\gamma$. They will be plumbed together into a slope $m$ plumbing sub-multiloop $\gamma_m$ which uses exactly $6$ strands, $2$ of which are anchor strands near the unit circle. A parity condition leads to distinguishing cases in the construction: examples of all cases are shown in Figures \ref{fig:alphaonly}, \ref{fig:bothstrands}, \ref{fig:betaonly}, \ref{fig:nostrands}. Each of the $6$ slopes yields $4$ strands, and with the $2$ anchor strands we have a total $26$ strands.

    \item We tweak the parameters of our construction to ensure that all intersections of strands are transverse and there is no unwanted interaction between gadgets, and resolve intersection points of higher multiplicity locally to obtain a multiloop with $26$ strands. We then remove $6$ unnecessary strands corresponding to the slopes $\phi_1, \phi_2, \phi_3$ to obtain a multiloop $\gamma$ with $20$ strands.

    \item We define the number $k'$ and show that the size of $(\gamma,k')$ in memory is polynomial in that of $(G,k)$. Finally we study the mobidisc structure of $\gamma$ obtained from this construction and show that it is pinned with $k'$ pins if and only if $G$ has a vertex cover of size at most $k$.

\end{enumerate}

\subsubsection{Precise descriptions of edge gadgets and edge gadget bundles}

First, we define the \emph{edge gadget} represented in Figure \ref{fig:edgegadget}.
\begin{definition}[edge gadget]
\label{edge-gadget}
The \emph{edge gadget} (with parameter a real $\epsilon>0$, naturals $\eta,\eta' \in \N$, and a letter from the set $\{\alpha,\beta\}$) is associated to an oriented straight edge $e$ with slope $m$ from point $x$ to point $y$ in $\R^2$. It is the planar configuration in Figure \ref{fig:edgegadget} of two piecewise linear paths with $3$ corners each (with labels $\{\alpha_m^+,\alpha_m^-\}$ colored orange and purple respectively, or labels $\{\beta_m^+,\beta_m^-$\} colored green and blue respectively) forming a bigon $M_e$ around the oriented edge. Let $\delta=\epsilon\tan\epsilon$ and note that as $\epsilon\to 0$, the angles $\theta_1,\theta_2,\theta_3,\theta_4\to 0$ and the bigon $M_e$ approaches $e$.
\end{definition}

\begin{figure}[H]
    \centering
     \scalebox{0.4}{\input{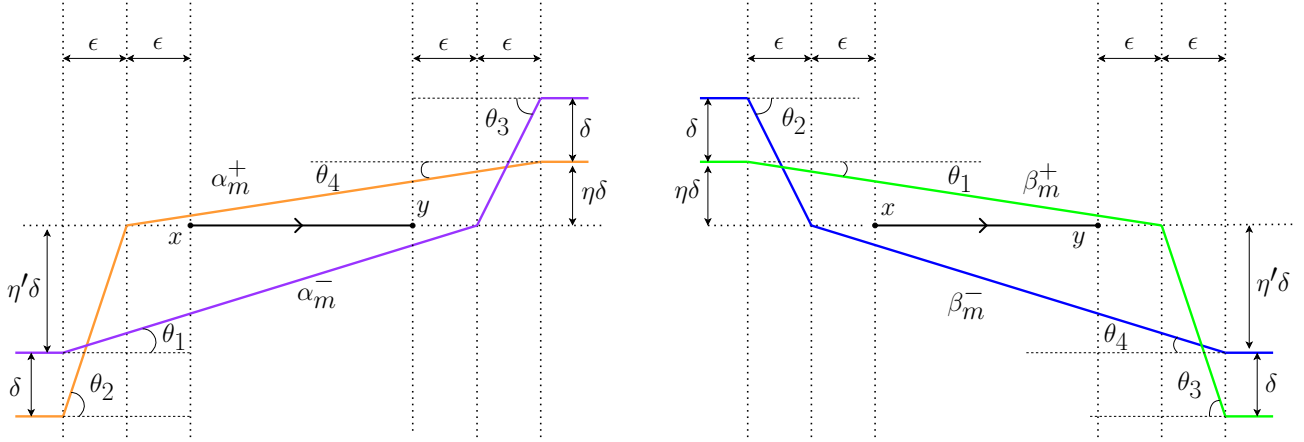}}
    \caption{Left: The edge gadget with parameter $\alpha$. Right: The edge gadget with parameter $\beta$.}
    \label{fig:edgegadget}
\end{figure}

Next, we define the \emph{edge gadget bundle} represented in Figure \ref{fig:edge_gadget_bundle}.
\begin{definition}[edge gadget bundle] \label{def:edge-gadget-bundle}
The edge gadget bundle with parameter $\epsilon>0$ is a planar configuration associated to a set of $n\geq 1$ collinear edges in $\mathbb{B}(0,\frac{1}{2})$ having slope $m$. It consists of $2n$ piecewise linear paths with labels among $\alpha_m^+,\alpha_m^-,\beta_m^+, \beta_m^-$ (respectively colored orange, purple, green, blue), the unit circle $\mu$, and another strand $\nu$ which is within a closed $\delta=\epsilon\tan\epsilon$-neighborhood of $\mu$. To describe it, we rotate the plane so that the slope $m$ is vertical, orient the edges to point upward, enumerate them $\{e_i\}_{i=1}^n$ from bottom to top, and let $L$ be the oriented line containing them. Construct edge gadgets with parameter $\epsilon$ around each edge, alternating between types $\alpha$ and $\beta$, starting with an $\alpha$ edge gadget at the bottom (see Figure \ref{fig:edge_gadget_bundle} at left). For each edge gadget, choose distinct $\eta$ and $\eta'$ so that
\begin{enumerate}[noitemsep]
    \item all vertical segments of both $\alpha$ and $\beta$ edges are pairwise non-collinear and occur at horizontally-spaced increments of $\delta=\epsilon\tan\epsilon$, 
    \item paths of $\alpha$ edges join monotonously the top-left to the bottom-right,     
    paths of $\beta$ edges join monotonously the top-right to the bottom-left, and
    \item all vertical $\beta$ segments are closer than the $\alpha$ segments to $L$.
\end{enumerate}

Extend all upward-pointing vertical paths of edge gadgets upward to distance $\frac{1}{2}$ beyond $\mu$. These endpoints will be referred to as \emph{terminals}. Let $\nu$ consist of a union of arcs of $\partial\mathbb{B }(0,1-\frac{\delta}{3})$ except where \emph{teeth} bigons occur: they are peninsular subarcs shown in Figure \ref{fig:edge_gadget_bundle} at right consisting of two short vertical arcs at distance $\frac{\delta}{3}$ and $\frac{2\delta}{3}$ to the left of where each vertical arc associated to an edge gadget leaves $\mu$ which are connected by a short circular arc of $\partial\mathbb{B}(0,1+\frac{\delta}{3})$. Repeat this construction for downward-pointing vertical paths of edge gadgets after rotating the picture by $\pi$ radians.

Note that each edge $e_i$ gives rise to exactly $8$ bigons bounded by $\mu$ and $\nu$: $4$ \emph{regional tooth} bigons and $4$ \emph{subdivided teeth}: embedded bigons bissected by a path with label among $\{\alpha_m^+,\alpha_m^-,\beta_m^+,\beta_m^-\}$ with $2$ regions apiece. Thus, in addition to the $M_{e_i}$ bigons, each edge gadget bundle gives rise to an additional $8n$ bigons which will require $8n$ pins to make $\gamma$ taut. Note that the union of all of these bigons approaches $L$ as $\epsilon\to 0$ and that there will be no paths with $\beta$ labels if $n=1$. 

\begin{figure}[H]
    \centering
    \scalebox{0.7}{\input{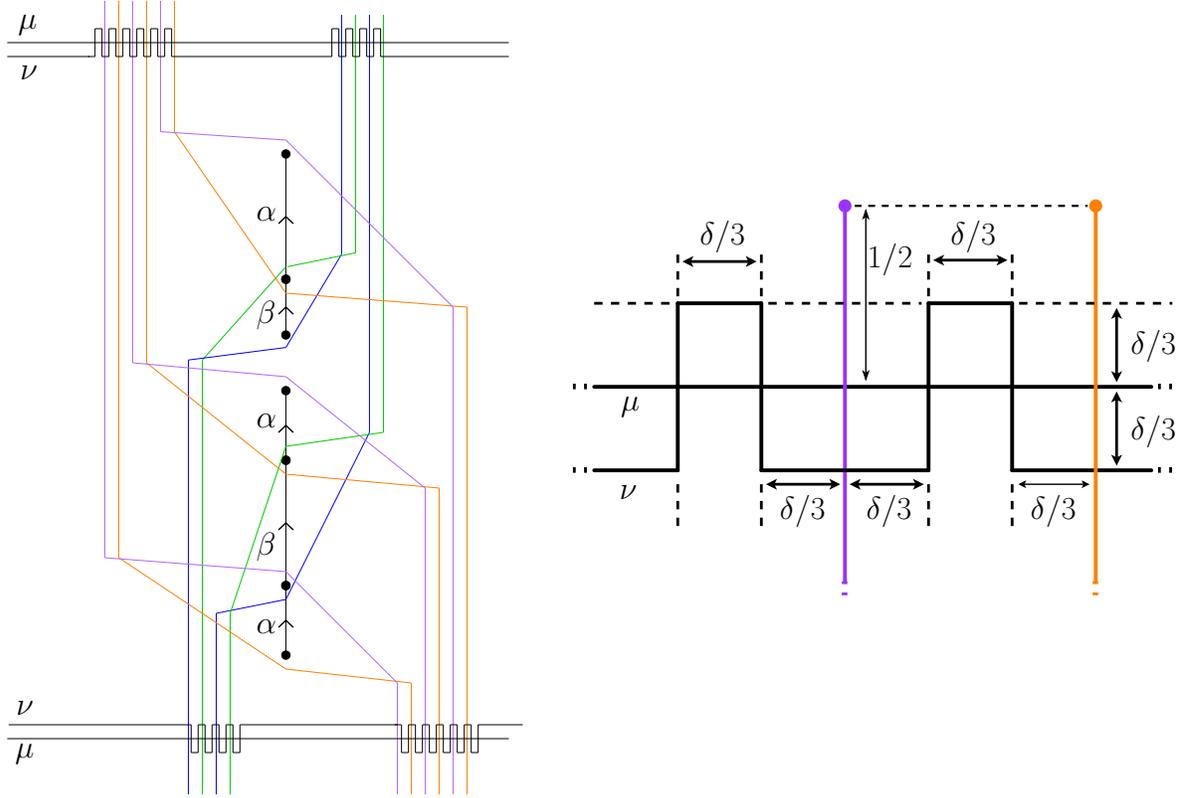}}
    \caption{Left: An edge gadget bundle with $n=5$ edges. Right: Zoomed-in view of regional teeth and subdivided teeth with terminals emphasized. Circular arcs are shown as horizontal segments.}
    \label{fig:edge_gadget_bundle}
\end{figure}

\end{definition}

\subsubsection{Connecting paths of edge gadget bundles with a given slope to form $4$ strands}
\label{sec:plumbing}

In this subsection, we explain how to connect the paths with labels $\alpha_m^+,\alpha_m^-,\beta_m^+, \beta_m^-$ from all edge gadget bundles with a given slope $m\in[0,\pi)$ to form a simple multiloop with $4$ strands.
The connections will be made outside of the unit circle, around which there will be $2$ additional strands (which will be the same for all slopes). 
The description of the construction will span several pages. Schematic examples of the end results are shown in Figures \ref{fig:alphaonly}, \ref{fig:bothstrands}, \ref{fig:betaonly}, \ref{fig:nostrands}.
The resulting construction (a multiloop $\gamma_m$ with $6$ strands) will be referred to as the \emph{bundle plumbing} or \emph{slope $m$ plumbing sub-multiloop}.

First, we orient all edges in the same direction and rotate the plane so that $m$ is vertical and the edges point upward.
The $n$ edges are partitioned into a set of collinearity classes which define parallel lines. For each collinearity class, construct an edge gadget bundle as in Definition \ref{def:edge-gadget-bundle} with parameter $\epsilon$. Assume that $\epsilon$ is small enough so that the non-vertical paths with labels among $\{\alpha_m^+,\alpha_m^-,\beta_m^+, \beta_m^-\}$ (respectively colored orange, purple, green, and blue) associated to each edge gadget bundle are contiguous. Suppose that there are $n_\alpha$ edges with $\alpha$ labels and $n_\beta$ edges with $\beta$ labels so that $n=n_\alpha+n_\beta$. Note that $n_\alpha\geq 1$ and $n_\beta\geq 0$.

\paragraph{Parity balancing procedure.}

We must ensure that the parity of each label among $\{\alpha_m^+,\alpha_m^-,\beta_m^+,\beta_m^-\}$ is even and each label occurs at least once by adding either $2$ or $4$ or $6$ more vertical segments which extend to a distance of $\frac{1}{2}$ above and below the unit circle $\mu$ at incremental distances $\delta=\epsilon\tan\epsilon$ to the left of the leftmost edge gadget bundle, and give them labels from among $\{\alpha_m^+,\alpha_m^-,\beta_m^+, \beta_m^-\}$ (still colored orange, purple, green, blue) from left to right, according to the following rules.

\begin{enumerate}
\item If $n_\alpha$ is odd and $n_\beta$ is even, add six new line segments with labels/colors \\ $(\beta_m^-,\beta_m^+,\beta_m^-,\beta_m^+,\alpha_m^-,\alpha_m^+)$/(blue, green, blue, green, purple, orange) (see Figure \ref{fig:alpha_odd_beta_even}).
\item If $n_\alpha$ is odd and $n_\beta$ is odd, add four new line segments with labels/colors \\ $(\beta_m^-,\beta_m^+,\alpha_m^-,\alpha_m^+)$/(blue, green, purple, orange) (see Figure \ref{fig:alpha_odd_beta_odd}).
\item If $n_\alpha$ is even and $n_\beta$ is odd, add two new line segments with labels/colors \\$(\beta_m^-,\beta_m^+)$/(blue, green) (see Figure \ref{fig:alpha_even_beta_odd}).
\item If $n_\alpha$ is even and $n_\beta$ is even, add four new line segments with labels/colors \\ $(\beta_m^-,\beta_m^+,\beta_m^-,\beta_m^+)$/(blue, green, blue, green) (see Figure \ref{fig:alpha_even_beta_even}).
\end{enumerate}

For each of those added segments, we also add a regional tooth between $\mu$ and $\nu$ to its left as it exits $\mu$ (thus creating $4$ bigons between $\mu$ and $\nu$) as in Figure \ref{fig:edge_gadget_bundle} at right. 

In Figures  \ref{fig:alpha_odd_beta_even}, \ref{fig:alpha_odd_beta_odd}, \ref{fig:alpha_even_beta_odd}, \ref{fig:alpha_even_beta_even}, circular arcs appear as horizontal lines, and dotted vertical lines separate edge gadget bundles from the segments added in the parity balancing procedure, but $\nu$ is not shown.

\begin{figure}[H]
    \centering
     \scalebox{1}{\input{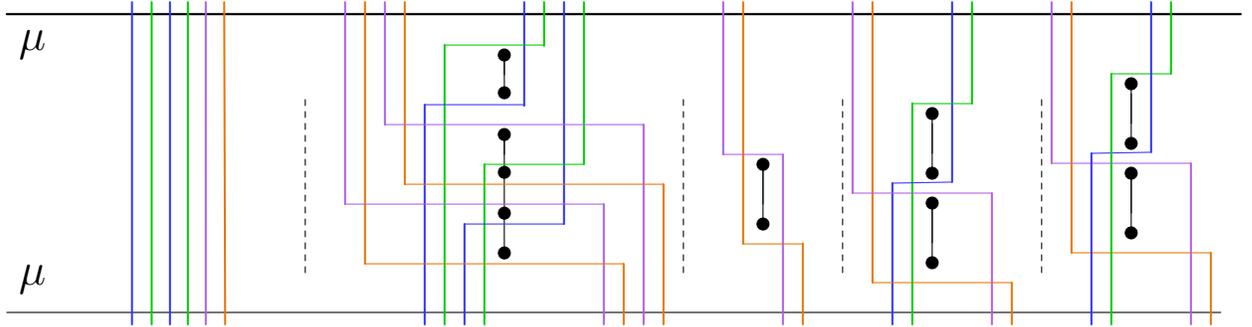}}
    \caption{Parity balancing procedure when $n_\alpha$ is odd and $n_\beta$ is even.}
    \label{fig:alpha_odd_beta_even}
\end{figure}

\begin{figure}[H]
    \centering
     \scalebox{1.05}{\input{images/alpha_odd_beta_odd}}
    \caption{Parity balancing procedure when $n_\alpha$ is odd and  $n_\beta$ is odd.}
    \label{fig:alpha_odd_beta_odd}
\end{figure}

\begin{figure}[H]
    \centering

     \scalebox{0.85}{\input{images/alpha_even_beta_odd}}
    \caption{Parity balancing procedure when $n_\alpha$ is even and $n_\beta$ is odd.}
    \label{fig:alpha_even_beta_odd}
\end{figure}

\begin{figure}[H]
    \centering
     \scalebox{0.8}{\input{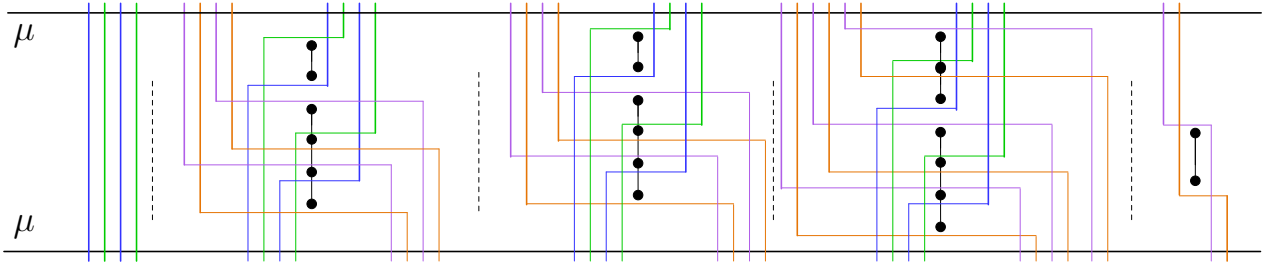}}
    \caption{Parity balancing procedure when $n_\alpha$ is even and $n_\beta$ is even.}
    \label{fig:alpha_even_beta_even}
\end{figure}

\paragraph{Upper and lower terminal pairings.}

The endpoints of paths which terminate at distance $\frac{1}{2}$ outside of $\mu$ have labels among $\alpha_m^+,\alpha_m^-,\beta_m^+, \beta_m^-$ which we call \emph{terminals}, and by reading them from left to right above and below $\mu$ we obtain two \emph{terminal sequences}: the \emph{upper terminal sequence} above $\mu$ and the \emph{lower terminal sequence} below $\mu$. Moreover, each sequence begins with $(\beta_m^-,\beta_m^+,\ldots)$ (including the case where $n_\beta= 0$).

In every terminal sequence, for each color $\sigma_m^\star \in\{\alpha_m^+,\alpha_m^-,\beta_m^+,\beta_m^-\}$, the parity balancing procedure ensures that there is a positive even number of occurrences of $\sigma_m^\star$ in the subsequence. Enumerate the positions of the subsequence $\rho_1,\ldots, \rho_{2n(\sigma_m^\star)}$ from left to right, where $2n(\sigma_m^\star)$ is the number of terminals with that label. 
Pair the terms in the subsequence as follows, for each $i\bmod{2n(\sigma_m^\star)}$:
\begin{itemize}[noitemsep]
    \item In each upper terminal sequence, pair $\rho_{2i}$ with $\rho_{2i+1}$.

    \item In each lower terminal sequence, pair $\rho_{2i-1}$ with $\rho_{2i}$.
\end{itemize}
In particular $\rho_1$ is paired with $\rho_{2n(\sigma_m^\star)}$. 

Note that connecting all terminals according to their pairings yields a closed loop and therefore a strand associated to $\sigma_m^\star\in\{\alpha_m^+,\alpha_m^-,\beta_m^+,\beta_m^-\}$. The plumbing we will construct will realize this pairing in the plane while avoiding self intersections within each strand and accidental bigons between distinct strands.

\paragraph{Blocks and pairing types.} 

We partition each terminal subsequence into \emph{blocks}: there is a block for each maximal subsequence of consecutive labels within the same edge gadget, and there is one additional block for all those coming from the parity balancing procedure. With this terminology in place, the terminal pairings may be one of the following pairing types.

\begin{itemize}[noitemsep]
    \item The pairing between $\rho_1$ and $\rho_{2n(\sigma_m^\star)}$ is called a \emph{cnidarian head} pairing.
    \item We refer to a terminal pairing as an \emph{inter-bundle} pairing when the terminals are not $\{\rho_1,\rho_{2n(\sigma_m^\star)}\}$ and come from different blocks in the terminal subsequence. 
    \item We refer to a terminal pairing as an \emph{intra-bundle} pairing when the terminals come from the same block in the terminal subsequence.
\end{itemize}

\paragraph{Connecting terminal pairings at appropriate levels.} 

We now connect the terminal points of each pairing using two vertical segments extending the vertical segments to which they belong to a circle $\partial\Ball(0,r)$ of an appropriate radius $r$, and a \emph{connecting arc} of $\partial\Ball(0,r)$ between them. We refer to the endpoints of a connecting arc as its \emph{connection points}. The radius $r$ and \emph{level} among \emph{upper levels} $\{U3,U2,U1\}$ or \emph{lower levels} $\{L1,L2\}$ of each connecting arc depend both on the type of the pairing connection and the label $\sigma_m^\star\in\{\alpha_m^+,\alpha_m^-,\beta_m^+,\beta_m^-\}$ of the pairing terminals, according to Table \ref{fig:routing_pairings}.

\begin{figure}[H]
\centering
    \begin{tabular}[h]{c|c||c|c} 
     Pairing type & Label/color & Level & Circle containing \\
     & & & associated connecting arc \\
     \hline\hline
     cnidarian head & $\beta_m^-$/blue & $U3$ &  $\partial\mathbb{B}(0,11)$ (upper arc)\\
     cnidarian head & $\beta_m^+$/green & $U3$ &  $\partial\mathbb{B}(0,10)$ (upper arc)\\
     cnidarian head & $\alpha_m^-$/purple  & $U3$ & $\partial\mathbb{B}(0,9)$ (upper arc)\\
     cnidarian head & $\alpha_m^+$/orange & $U3$ &  $\partial\mathbb{B}(0,8)$ (upper arc)\\
     \hline
     inter-bundle (upper) & $\beta_m^-$/blue & $U2$ &  $\partial\mathbb{B}(0,7)$ (upper arc)\\
     inter-bundle (upper) & $\beta_m^+$/green & $U2$ &  $\partial\mathbb{B}(0,6)$ (upper arc)\\
     inter-bundle (upper) & $\alpha_m^-$/purple  & $U2$ &  $\partial\mathbb{B}(0,5)$ (upper arc)\\
     inter-bundle (upper) & $\alpha_m^+$/orange  & $U2$ &  $\partial\mathbb{B}(0,4)$ (upper arc)\\
     \hline
     intra-bundle (upper) & $\alpha_m^-$/purple or $\beta_m^-$/blue & $U1$ &  $\partial\mathbb{B}(0,3)$ (upper arc)\\
     intra-bundle (upper) & 
     $\alpha_m^+$/orange or $\beta_m^+$/green & $U1$ &  $\partial\mathbb{B}(0,2)$ (upper arc)\\
     \hline
     \hline
     intra-bundle (lower) & $\alpha_m^+$/orange or $\beta_m^+$/green  & $L1$ &  $\partial\mathbb{B}(0,2)$ (lower arc)\\
     intra-bundle (lower) & $\alpha_m^-$/purple or $\beta_m^-$/blue & $L1$ &  $\partial\mathbb{B}(0,3)$ (lower arc)\\
     \hline     
     inter-bundle (lower) & $\alpha_m^+$/orange  & $L2$ &  $\partial\mathbb{B}(0,4)$ (lower arc)\\
     inter-bundle (lower) & $\alpha_m^-$/purple & $L2$ &  $\partial\mathbb{B}(0,5)$ (lower arc)\\
     inter-bundle (lower) & $\beta_m^+$/green & $L2$ &  $\partial\mathbb{B}(0,6)$ (lower arc)\\
     inter-bundle (lower) & $\beta_m^-$/blue & $L2$ &  $\partial\mathbb{B}(0,7)$ (lower arc)\\
     
\end{tabular}
\caption{Routing table from pairing type and label to level and connecting arc.}
    \label{fig:routing_pairings}
\end{figure}

Figures \ref{fig:alphaonly}, \ref{fig:bothstrands}, \ref{fig:betaonly}, \ref{fig:nostrands} show examples of how this procedure plays out for all of the cases in the parity balancing procedure. In these figures, connecting arcs are shown as horizontal lines, and $\nu$ is not shown. This completes the construction of the slope $m$ plumbing sub-multiloop $\gamma_m$.

\begin{figure}[H]
    \centering
     \scalebox{0.9}{\input{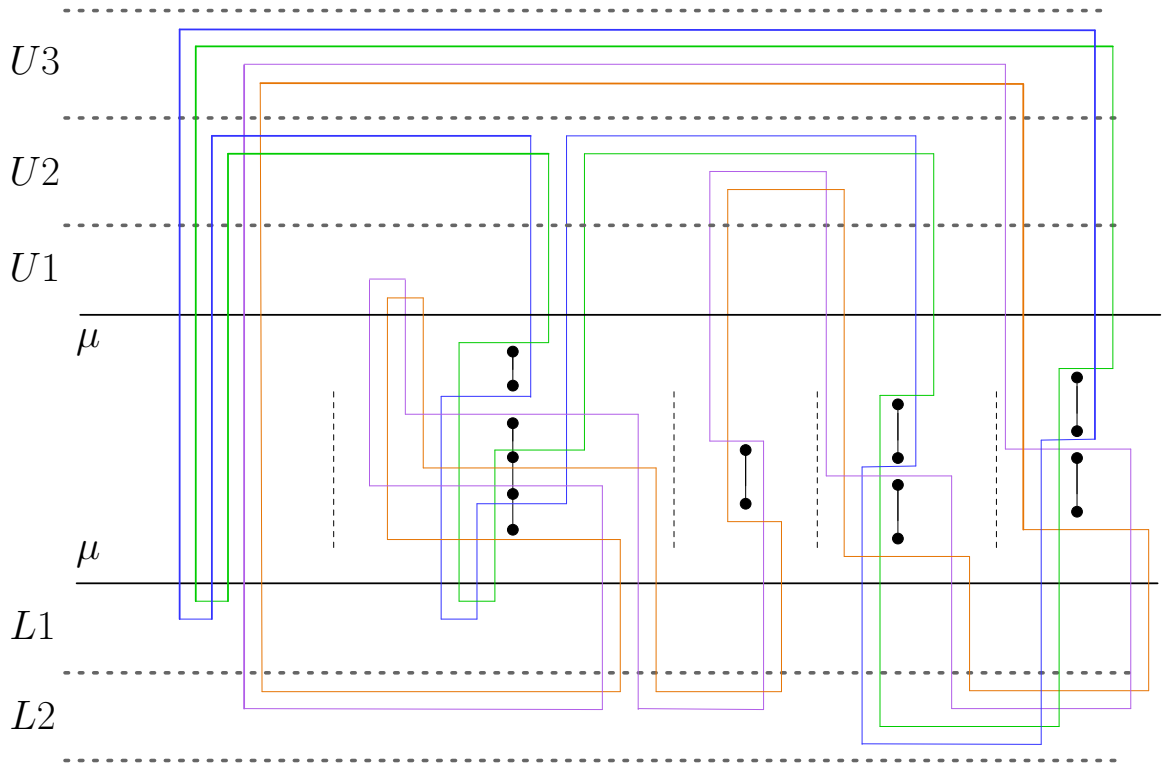}}
    \caption{Plumbing when $n_\alpha$ is odd and $n_\beta$ is even.}    \label{fig:alphaonly}
\end{figure}

\begin{figure}[H]
    \centering
    \scalebox{0.9}
    {\input{images/alpha_odd_beta_odd_plumb}}
    \caption{Plumbing when $n_\alpha$ is odd and $n_\beta$ is odd.}
    \label{fig:bothstrands}
\end{figure}

\begin{figure}[H]
    \centering
    \scalebox{0.8}{\input{images/alpha_even_beta_odd_plumb}}
    \caption{Plumbing when $n_\alpha$ is even and $n_\beta$ is odd.}
    \label{fig:betaonly}
\end{figure}

\begin{figure}[H]
    \centering
    \scalebox{0.75}{\input{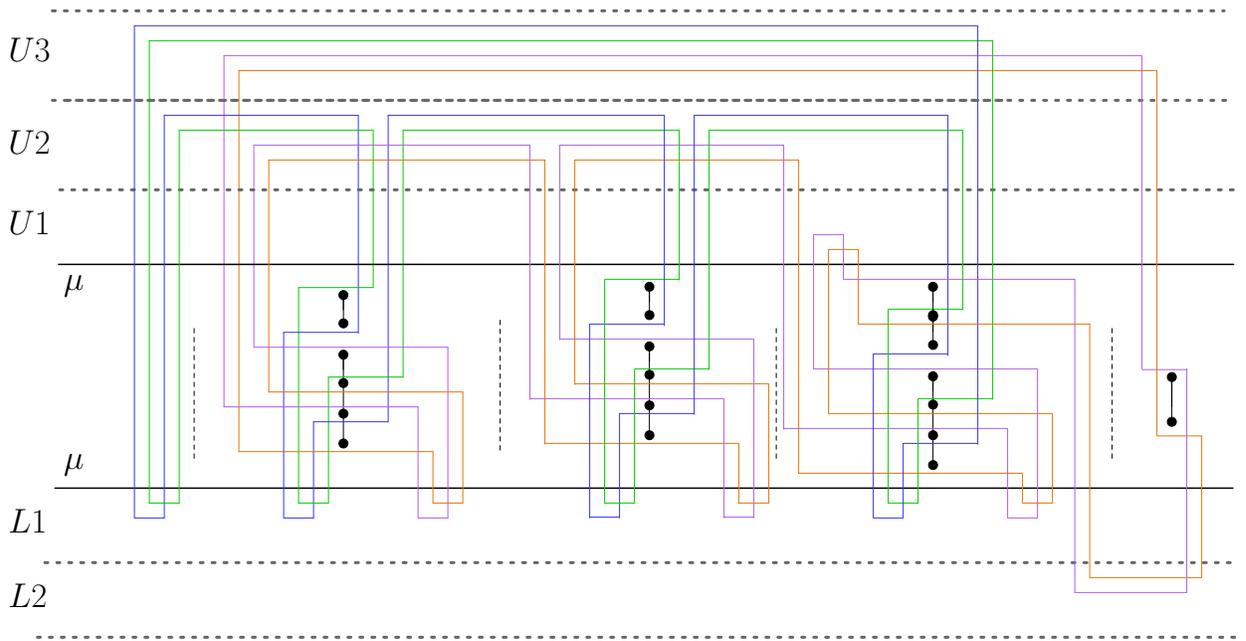}}
    \caption{Plumbing when $n_\alpha$ is even and $n_\beta$ is even.}
    \label{fig:nostrands}
\end{figure}

\subsubsection{Innermost bigon structure of bundle plumbings}

The following lemma will be used to show that in a slope $m$ plumbing sub-multiloop $\gamma_m$, there is no innermost embedded bigon of $\gamma_m$ both of whose marked points lie along a connecting arc of the plumbing.

\begin{lemma}[connecting arcs are staggered]
\label{lem:stagger}
    Let $m\in[0,\pi)$ and $\lambda_m$ and $\lambda'_m$ be connecting arcs in a slope $m$ plumbing sub-multiloop $\gamma_m$ that are both in the upper levels or both in the lower levels. Let $r$ and $r'$ be their associated radii, respectively. Let $a,b$ be the left and right $x$-coordinates of the connection points of $\lambda_m$ (relative to the vertical orientation of slope $m$), and let $a'$ and $b'$ be the left and right $x$-coordinates of the connection points of $\lambda'_m$. If $r>r'$, then $a<a'$ or $b>b'$.
\end{lemma}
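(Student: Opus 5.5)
We argue by contraposition: assume $r>r'$ and $a'\le a<b\le b'$, i.e. the outer arc $\lambda_m$ is nested horizontally inside the inner arc $\lambda'_m$, and derive a contradiction with the routing rules of Table \ref{fig:routing_pairings}. By the symmetry between upper and lower levels (rotation by $\pi$ exchanges them, with the parity of the pairing shifted), we treat the upper levels and only indicate the changes for the lower ones. The case analysis runs over the pair of levels of $(\lambda_m,\lambda'_m)$. Since $r>r'$, the level of $\lambda_m$ is at least that of $\lambda'_m$ in the order $U1<U2<U3$. Within a single level, the label is determined by $r$.

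\textbf{Case $\lambda_m$ at $U3$.} A cnidarian head pairing joins $\rho_1$ to $\rho_{2n(\sigma_m^\star)}$, the leftmost and rightmost terminals of its label. Any other arc $\lambda'_m$ at $U3$ has a different label. The four cnidarian heads of labels $\beta^-_m,\beta^+_m,\alpha^-_m,\alpha^+_m$ have radii $11,10,9,8$. Their left endpoints lie in the parity-balancing block, ordered left to right as $(\beta^-,\beta^+,\ldots,\alpha^-,\alpha^+)$. So a larger radius gives a strictly smaller $a$, i.e. $a<a'$.

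If $\lambda'_m$ is at $U2$ or $U1$, it pairs two terminals of some label. The head $\lambda_m$, of radius at least $8$, starts at $\rho_1$ of its label, which lies in the leftmost (balancing) block. One has to check that every terminal of another label that is connected to a lower level lies to the right of this $\rho_1$. This holds because each balancing block begins with $(\beta^-_m,\beta^+_m,\ldots)$ and every $\alpha$ terminal in that block follows all the $\beta$ ones. The exception is a label whose own leftmost terminal precedes $\rho_1$. Such a terminal is itself a cnidarian head endpoint, so it is not at a lower level, and the sub-case needs a short check.

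\textbf{Case $\lambda_m$ at $U2$ and $\lambda'_m$ at $U1$.} An intra-bundle arc connects two terminals inside one block, i.e. inside one edge gadget or the balancing block. An inter-bundle arc connects terminals in different blocks, so it cannot lie horizontally inside a single block's span. Blocks occupy disjoint horizontal intervals, so $a'\le a<b\le b'$ would force both endpoints of $\lambda_m$ into one block, a contradiction.

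\textbf{Case both arcs at the same level.} Here the labels are distinct, or, at $U1$, a $-$ label and a $+$ label. I expect this to be the main obstacle, since it needs the precise geometry of the edge gadgets (Definition \ref{edge-gadget}) and of the bundle ordering conditions 1--3 in Definition \ref{def:edge-gadget-bundle}.

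At $U2$, consecutive same-label pairs $\rho_{2i},\rho_{2i+1}$ pass between adjacent occurrences of that label. So nesting inside $[a',b']$ means that two occurrences of label $\sigma$ lie between consecutive occurrences of label $\sigma'$. The plan is to show that in each terminal sequence the labels interleave in a fixed pattern block by block: each edge gadget contributes one terminal of each of its two labels, in a fixed order. Nesting then happens only in the forbidden radius direction. The radius table is reverse-ordered ($\beta^-$ largest), precisely to match the left-to-right order $\beta^-,\beta^+,\alpha^-,\alpha^+$ within the sequences.

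At $U1$ the argument is the same, using the orientation of the edge gadget paths. $\alpha$ paths go from top-left to bottom-right and $\beta$ paths go the other way. This fixes the relative order of the $\pm$ terminals within each block and gives $b>b'$ or $a<a'$ as needed.

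For the lower levels we repeat the argument with the pairing $\rho_{2i-1},\rho_{2i}$. The radius order is reversed, $L1$ inner, and there is no cnidarian level.
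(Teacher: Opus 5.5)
\textbf{Overall.} Your case split matches the paper's: heads are compared through their left endpoints, $U2$ against $U1$ through the spans of blocks, and same-level pairs through interleaving of the $\pm$ terminals. Your disjoint-block-span argument for $U2$ against $U1$ is a slightly cleaner version of the paper's comparison $b'-a'=2\delta<b-a$.

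\textbf{The gap: $\lambda_m$ at $U3$, $\lambda'_m$ at $U2$ or $U1$.} The ``short check'' you defer cannot be carried out, because your claim that every terminal joined at a lower level lies to the right of the head's $\rho_1$ is false.
\begin{itemize}
\item In balancing cases 1 and 4 the block begins $(\beta^-_m,\beta^+_m,\beta^-_m,\beta^+_m)$. The second $\beta^\pm_m$ terminals are $\rho_2$ of their labels.
\item When $n_\beta\ge 2$, the upper rule $\rho_2\leftrightarrow\rho_3$ joins them by inter-bundle arcs, at radii $7$ and $6$, to the first $\beta^\pm_m$ terminals in the bundles.
\item These $\rho_2$ terminals lie to the left of $\rho_1$ of $\alpha^\pm_m$. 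Your own remark that the block's $\alpha$ terminals follow its $\beta$ ones says exactly this. Moreover, in cases 3 and 4 the $\alpha$ heads do not even start in the balancing block.
\item Your ``exception'' only covers $\rho_1$ of the $\beta$ labels, so it misses these terminals.
\end{itemize}
Hence $a'<a$, and you would need $b>b'$. That fails whenever all $\beta$ edges lie in the rightmost collinearity class. By conditions 2 and 3 of Definition \ref{def:edge-gadget-bundle}, the upper $\beta$ terminals of a bundle lie to the right of $L$ and all of its upper $\alpha$ terminals lie to the left.

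\textbf{Concrete counterexample.} Take a single class of five collinear edges, so $n_\alpha=3$, $n_\beta=2$ (case 1).
\begin{itemize}
\item The $\alpha^-_m$ head (radius $9$) runs from balancing position $5$ to the last upper $\alpha^-_m$ terminal of the bundle.
\item The $\beta^-_m$ inter-bundle arc (radius $7$) runs from balancing position $3$ to the first upper $\beta^-_m$ terminal of the bundle.
\item Then $a'<a<b<b'$ although $r>r'$.
\end{itemize}
A single class of four edges (case 4) behaves the same way.

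\textbf{Consequences.} For the construction as described, the inequality is false in this sub-case. The paper's proof has the same hole: its sentence ``$a$ is in the parity balancing block and $a'$ is not'' fails exactly here. The nested pair bounds an embedded bigon outside $\mathbb{B}(0,1)$, a Type 2 path against a Type 0 arc. That is precisely the kind of bigon that Lemma \ref{lem:bigons_within_plumbing_subloop} uses this lemma to exclude. Repairing it requires changing the construction (for instance the order of the balancing segments or the routing radii), not a finer argument.

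\textbf{Same-level cases.} Separately, these remain a plan in your proposal. What actually drives them is the following.
\begin{itemize}
\item The $\alpha^-_m$ and $\alpha^+_m$ terminals alternate, with those of one edge gadget adjacent; likewise for $\beta^\pm_m$.
\item Every bundle contains an $\alpha$ edge.
\item In the upper sequence each bundle's $\alpha$ terminals precede its $\beta$ terminals; in the lower sequence this order is reversed.
\end{itemize}
The order $\beta^-_m,\beta^+_m,\alpha^-_m,\alpha^+_m$ you invoke holds only inside the balancing block. For the lower levels the radii are not reversed; what flips is the $\alpha/\beta$ order inside each bundle.
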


\begin{proof}
 Suppose $r>r'$ and consider all possible combinations for the levels of $\lambda_m$ and $\lambda'_m$. We will consider the case where $\lambda_m$ and $\lambda'_m$ are in the upper levels $\{U1,U2,U3\}$. The following arguments are immediately adapted for the case where $\lambda_m$ and $\lambda'_m$ are in the lower levels $\{L1,L2\}$.

    \begin{enumerate}[leftmargin=0.5cm]
        \item \textbf{$U3$ vs $\{U1,U2,U3\}$.} \begin{itemize}[leftmargin=0cm]
            \item Suppose $\lambda_m\in U3$ and $\lambda'_m\in U1$ or $\lambda_m\in U3$ and $\lambda'_m\in U2$. Here $a<a'$  since $a$ is in the parity balancing block and $a'$ is not.

        \item Suppose $\lambda_m,\lambda'_m \in U3$. If both $\lambda_m$ and $\lambda'_m$ belong to $\alpha$ strands, then $\lambda_m$ belongs to a minus strand and $\lambda'_m$ belongs to a plus strand, so $a<a'$ since $a+\delta= a'$. The same argument applies if both $\lambda_m$ and $\lambda'_m$ are $\beta$  strands. On the other hand if $\lambda_m$ is a $\beta$ strand and $\lambda'_m$ is an $\alpha$ strand, then $a<a'$ since $a+\delta\leq a'$.

        \end{itemize}

        \item \textbf{$U2$ vs $\{U1,U2\}$.} 
        \begin{itemize}[leftmargin=0cm]
            \item Suppose $\lambda_m \in U2$ and $\lambda'_m \in U1$.
      In this case, $\lambda_m$ comes from an intra-bundle pairing and $\lambda'_m$ comes from an inter-bundle pairing. Thus $b'-a'=2\delta$ and $b-a>2\delta$, so $a<a'$ or $b>b'$.

        \item  Suppose $\lambda_m,\lambda'_m\in U2$. In this case, both $\lambda_m$ and $\lambda'_m$ come from an inter-bundle pairings. If both $\lambda_m$ and $\lambda'_m$ belong to $\alpha$ strands, then $\lambda_m$ belongs to a minus strand and $\lambda'_m$ belongs to a plus strand. Then either $\lambda_m$ and $\lambda'_m$ do not connect the same two blocks (in which case $[a,b]\cap[a',b']=\emptyset$), or they do connect the same two blocks, and then $a<a'$ since $a+\delta= a'$. The same argument applies if both $\lambda_m$ and $\lambda'_m$ belong to $\beta$ strands. On the other hand if $\lambda_m$ belongs to a $\beta$ strand and $\lambda'_m$ belongs to an $\alpha$ strand, we argue as follows. Since the edge gadget bundle associate to $b$ has at least one $\alpha$ edge, there is an upper terminal in that same bundle with an $\alpha$ label whose $x$-coordinate lies in $[a,b]$. This terminal cannot be skipped by $\lambda'_m$. Thus either $[a,b]\cap[a',b']=\emptyset$ or one of $a',b'$ lies in $[a,b]$.
 \end{itemize}
        \item \textbf{$U1$ vs $U1$.} 
        \begin{itemize}[leftmargin=0cm]
            \item Suppose $\lambda_m, \lambda'_m\in U1$. In this case, both $\lambda_m$ and $\lambda'_m$ come from intra-bundle pairings. If both $\lambda_m$ and $\lambda'_m$ belong to $\alpha$ strands, then $\lambda_m$ belongs to a minus strand and $\lambda'_m$ belongs to a plus strand, so either $[a,b]\cap[a',b']=\emptyset$ or  $a+\delta= a'$. The same argument applies if both $\lambda_m$ and $\lambda'_m$ belong to $\beta$ strands. On the other hand, if one of $\lambda_m$ and $\lambda'_m$ belongs to an $\alpha$ strand and the other belongs to a $\beta$ strand, then $[a,b]\cap[a',b']=\emptyset$. 
        \end{itemize}        
    \end{enumerate}
\noindent This completes the proof of the lemma. \end{proof}

\begin{lemma}[no accidental innermost bigons within a bundle plumbing]
\label{lem:bigons_within_plumbing_subloop}
Fix a slope $m\in[0,\pi)$ and consider a slope $m$ plumbing sub-multiloop $\gamma_m$ with $6$ strands as defined in Section \ref{sec:plumbing}. For $\epsilon>0$ sufficiently small (which we can compute in polynomial time), the only innermost embedded monorbigons of $\gamma_m$ are bigons $M_e$ around edges $e$ (as in Figure \ref{fig:edgegadget}) and bigons between $\mu$ and $\nu$ (as in Figure \ref{fig:edge_gadget_bundle}).
\end{lemma}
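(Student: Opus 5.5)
Since every strand of $\gamma_m$ is simple (by construction of the pairings, which we verify along the way), there are no monogons. It therefore suffices to classify the innermost embedded bigons $B$ bounded by arcs of two distinct strands $\sigma,\tau$ with marked points $x,y$. I would split the argument according to where the double points of $\gamma_m$ lie. There are three kinds:
\begin{enumerate}
\item crossings inside $\mathbb{B}(0,1)$ between the paths of edge gadgets within a single bundle, or between paths of parallel bundles of the same slope;
\item crossings of the vertical extensions with $\mu$ and $\nu$ near the unit circle;
\item crossings outside $\mu$ between vertical extensions and connecting arcs at the levels $U1,U2,U3,L1,L2$.
\end{enumerate}
Since all the paths are piecewise linear with explicitly chosen slopes and offsets (multiples of $\delta=\epsilon\tan\epsilon$), one can compute a polynomial-time threshold on $\epsilon$ guaranteeing the following. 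The nonvertical portions of different bundles are disjoint except where forced. Each edge gadget meets only its own two strands inside $\mathbb{B}(0,\frac12)$. Vertical segments are pairwise non-collinear. Finally, the teeth of $\nu$ are disjoint from all other features.

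\textbf{Inside the unit disc.} Fix an innermost bigon $B$ whose two marked points both lie inside $\mathbb{B}(0,1)$. By the monotonicity conditions of Definition \ref{def:edge-gadget-bundle}, an $\alpha$ path runs top-left to bottom-right and a $\beta$ path runs top-right to bottom-left. Hence two paths of the same bundle cross at most in the configurations drawn in Figure \ref{fig:edgegadget}, namely the two corners of $M_e$, or transverse crossings between an $\alpha$ gadget and an adjacent $\beta$ gadget. I would check directly that any bigon formed by the latter crossings contains a marked point or an arc of some $M_e$ in its interior, so it is not innermost. The bigon $M_e$ itself contains no other strand, since all other vertical segments lie at horizontal distance at least $\delta$. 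So the only innermost bigons here are the $M_e$.

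\textbf{Near $\mu$.} Bigons with a marked point on $\mu$ or $\nu$ are handled by the tooth picture in Figure \ref{fig:edge_gadget_bundle}. Between $\mu$ and $\nu$ the innermost bigons are exactly the regional teeth and the subdivided teeth. A bigon between a vertical path and $\mu$ (or $\nu$) would need the vertical path to cross $\mu$ twice on the same side of the disc. This is impossible, since each vertical extension crosses $\mu$ once going out and the connecting arc stays outside radius $2$.

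\textbf{Outside the disc.} This case uses Lemma \ref{lem:stagger} and is the main obstacle. Suppose both marked points of an innermost bigon lie outside $\mu$, say in the upper half. Each side of the bigon then consists of pieces of vertical segments and connecting arcs. For two sides with common endpoints $x,y$, one side must contain a connecting arc $\lambda$ at radius $r$ and the other a connecting arc $\lambda'$ at radius $r'<r$, nested above one another. The marked points are where the vertical legs of one strand cross the other's arc. That requires $[a',b']\subset[a,b]$ with both legs of $\lambda'$ passing through $\lambda$, or the symmetric configuration. This is precisely what the lemma's conclusion ($a<a'$ or $b>b'$) excludes, as I read it: the staggering forces at least one leg of the outer strand to lie strictly outside the inner interval, so only one crossing occurs.

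What remains is to rule out mixed bigons with one marked point outside $\mu$ and one inside. Their sides would contain a long vertical segment through the annulus, and hence the teeth of $\nu$, and then a tooth bigon lies inside them, contradicting innermostness. I expect the fiddly part to be enumerating the combinatorial shapes of possible bigon boundaries in the outer region. I would organize that enumeration by the pair of levels involved and, in each case, invoke the corresponding case of Lemma \ref{lem:stagger}.
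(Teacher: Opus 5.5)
Your overall strategy matches the paper's: teeth dispose of anything touching $\mu$, only the $M_e$ survive inside, and Lemma~\ref{lem:stagger} disposes of the rest outside. However, the outside case, which you rightly call the heart of the matter, is left as an enumeration you have not done, and the one configuration you write down is set up backwards.

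\textbf{The outside case.} The missing idea is a corner count. Once $B$ lies outside $\mathbb{B}(0,1)$, each side is an arc of a strand that never meets $\mu$. So it contains at most one connecting arc and has at most two corners. It is either a subarc of a connecting arc (Type 0), a vertical piece followed by a subarc of a connecting arc (Type 1), or two vertical legs with an entire connecting arc between them (Type 2). Vertical pieces are parallel and connecting arcs are concentric, so every double point outside $\mu$ is a vertical piece crossing a connecting arc. Since each leg runs from $\mu$ up to its own arc, only one configuration survives. It is a Type 2 side, with arc of radius $r$ and legs at $a,b$, against a Type 0 side on an arc of radius $r'<r$ with connection points $a',b'$. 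That inner arc is crossed by both outer legs, i.e.\ $a'<a<b<b'$, which is exactly the negation of the conclusion of Lemma~\ref{lem:stagger}.

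You instead require $[a',b']\subset[a,b]$ with both legs of the inner arc $\lambda'$ crossing the outer arc $\lambda$. This is impossible, since the legs of $\lambda'$ stop at radius $r'$. It is also not what the lemma excludes: $[a',b']\subset[a,b]$ with $a<a'$ satisfies ``$a<a'$ or $b>b'$''. Your final clause has the right reading. But without the corner count, nothing justifies that two legs of one strand crossing a single arc of the other is the only configuration to rule out.

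\textbf{The near-$\mu$ and inside cases.} Your near-$\mu$ step is false as stated. A strand that exits through $\mu$ re-enters on the same side at the other end of its connecting arc. For instance, every intra-bundle pairing, together with the arc of $\mu$ between the two points where its legs cross $\mu$, bounds an embedded bigon. Such bigons exist; they are simply not innermost, because they contain a tooth.

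Relatedly, sorting bigons by where their marked points lie misses bigons whose marked points are both inside (or both outside) the disc but one of whose sides crosses $\mu$. The paper avoids both problems by first discarding every $B$ with a side on, or crossing, $\mu$ or $\nu$. Teeth flank each crossing with $\mu$ on both sides, so one of them lies in $B$. After that, $B$ lies entirely inside or entirely outside $\mathbb{B}(0,1)$.

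Inside, the fact to use is that for small $\epsilon$ each arc of $\alpha_m^\pm\cap\mathbb{B}(0,1)$ meets each arc of $\beta_m^\pm\cap\mathbb{B}(0,1)$ at most once. Hence there are no $\alpha$--$\beta$ bigons at all. Your proposed test (a bigon containing a marked point or an arc of some $M_e$ is not innermost) is not a valid criterion, because innermost means containing no other mobidisc.
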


\begin{proof}
Let $B$ be an innermost bigon of $\gamma_m$ with marked points $x,y$ bounded by paths $\lambda,\lambda'$.

Suppose first that $\lambda$ or $\lambda'$ is a subarc of $\mu$ or $\nu$. In this case both $x$ and $y$ lie along $\mu$ or $\nu$, and $B$ contains a tooth bigon or subdivided tooth bigon. Thus we may assume that $\lambda$ and $\lambda'$ have labels among $\{\alpha_m^+,\alpha_m^-,\beta_m^+,\beta_m^-\}$.

Suppose now that $\lambda$ or $\lambda'$ intersects $\mu$ or $\nu$. Then it intersects $\mu=\partial\Ball(0,1)$ and there are teeth bigons to the left and right of this intersection points along $\mu$, one of which must be contained in $B$. Thus we may assume that $B$ lies entirely inside $\Ball(0,1)$, or outside it.

If $B$ lies entirely inside $\Ball(0,1)$, then a choice of $\epsilon$ sufficiently small ensures that all strands of $\{\alpha_m^+,\alpha_m^-,\beta_m^+,\beta_m^-\}$ are arbitrarily close to parallel to the slope $m$. This implies that each arc of $\alpha_m^\pm\cap\Ball(0,1)$ intersects each arc of $\beta_m^\pm\cap\Ball(0,1)$ at most once as in Figure \ref{fig:edge_gadget_bundle} at left. Thus $B$ must be an $M_e$ bigon.

Thus we may assume that $B$ lies entirely outside of $\Ball(0,1)$, and there are two cases according to whether $\lambda$ and $\lambda'$ have subarcs among the upper levels $\{U_1,U_2,U_3\}$ or lower levels $\{L_1,L_2\}$ (having subarcs among both is impossible since neither $\lambda$ nor $\lambda'$ intersect $\mu$).

Our construction implies that $\lambda$ and $\lambda'$ have at most two corners. Specifically, each has one of the following types:

\begin{itemize}[noitemsep]
    \item Type 0 (no corners): It is a subarc of a connecting arc,
    \item Type 1 (one corner): It consists of a one vertical arc and one subarc of a connecting arc (whose marked point on the vertical arc lies at a smaller radius than that on the connecting arc),
    \item Type 2 (two corners): It consists of two vertical arcs with an entire connecting arc in between them, with corners at the connection points.
\end{itemize}

Note first that Type 1 is in fact impossible, because if without loss of generality $\lambda$ is Type 1, there is no available type for $\lambda'$. It is also impossible for both $\lambda,\lambda'$ to be of Type 0, and for both of $\lambda,\lambda'$ to be of Type 2. In fact the only way for $\lambda$ and $\lambda'$ to form a bigon is for $\lambda$ to be of Type $2$ and $\lambda'$ to be of Type $0$. But then the connecting arc associated to $\lambda$ would be at  a larger radius than that of $\lambda'$, contradicting Lemma \ref{lem:stagger}. \end{proof}

\subsubsection{Proof of Theorem \ref{thm:20-hard}}

We are ready to prove the main theorem by reducing from \textsf{3C3PVC} to $\mathsf{SimplePin}(\Sigma,s)$ using this construction. We restate Theorem \ref{thm:20-hard}.

\begin{theorem}
For any fixed orientable surface $\Sigma$ and $s\geq 20$, $\mathsf{SimplePin}(\Sigma,s)$ is \textsf{NP}-complete.
\end{theorem}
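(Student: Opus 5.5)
Membership in \textsf{NP} is Corollary \ref{cor:pinning-simple-easy}. By Remark \ref{rmk:plane_enough} it suffices to prove \textsf{NP}-hardness for $\Sigma=\R^2$. Moreover, the case $s>20$ reduces to $s=20$ by adding $s-20$ small disjoint embedded circles, which create no bigons and hence do not change $\varpi$. So the plan is a polynomial reduction from \textsf{3C3PVC}, which is \textsf{NP}-complete by Lemma \ref{lem:pvc3c_hard}, to $\mathsf{SimplePin}(\R^2,20)$, following the construction summary.

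Given $(G,k)$, I first apply Lemma \ref{lem:embed_with_4_slopes}. For each slope $m$ I build the plumbing sub-multiloop $\gamma_m$, and then superimpose all of them, sharing the two anchor strands $\mu,\nu$. I also rotate so that the six bundles of lines pass through $\mathbb{B}(0,\frac12)$ transversally to one another. For the three exceptional slopes $\phi_i$, each class is a single edge, so $n_\beta=0$ and only $\alpha$ strands are needed. I then argue that the $\beta_{\phi_i}^\pm$ strands coming from parity balancing can be deleted, leaving $4\cdot 3+2\cdot 3+2=20$ strands. Next I choose $\epsilon$ small, of polynomial bit-size, so that the following hold. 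The bigons $M_e$ lie inside disjoint neighborhoods of the edges, apart from small regions near the shared vertices. The connecting arcs of different slopes, which live on circles of radii up to $11$, cross transversally. Finally, I perturb the radii slightly per slope (e.g.\ radius $r+\frac{j}{10}$ for the $j$-th slope) so that no triple points occur.

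The key step is the analogue of Lemma \ref{lem:bigons_within_plumbing_subloop} for the whole $\gamma$. I want every innermost embedded bigon of $\gamma$ to be either an $M_e$ or a tooth or subdivided tooth between $\mu$ and $\nu$. The bigon must also not contain any other innermost bigon. Here is how I would show it. Suppose $B$ is bounded by arcs of strands with slopes $m\ne m'$. Near the unit circle, the argument with teeth forces $B$ to avoid crossing $\mu$. Inside $\mathbb{B}(0,1)$, strands of distinct slopes are near-straight lines, and two such lines meet at most once away from vertex neighborhoods. Near a vertex $v$, the three gadgets of the incident edges meet, and there I must check that $M_e$ remains innermost and that each $M_e$ contains exactly two triangular-like regions at $v$ and $w$ forming its only shared regions. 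Outside the unit circle, arcs of different slopes are radial segments and circular arcs on distinct radii, and such configurations cannot close up a bigon because no disc bounded by two such arcs is free of other strands. Anything a different-slope strand cuts through an $M_e$ only subdivides it, as in the $3$-strand proof.

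Then I set $k'=k+T$, where $T$ is the total number of teeth bigons: $8$ per edge plus $4$ per parity-balancing segment, which is computable. Each tooth needs its own pin, and these mobidiscs are disjoint from the $M_e$'s. Applying Corollary \ref{cor:pinning-simple-mobidiscs}, a pinning set of size $\le k'$ yields one using the vertex regions only, after moving pins from the interior regions of each $M_e$ to an endpoint region. Such pinning sets correspond exactly to vertex covers of $G$ of size $\le k$. The main obstacle is the global no-accidental-bigon claim, especially near vertices of $G$, where three differently sloped gadgets overlap. There I would first try to make the $M_e$ pinch at $x,y$ inside a tiny disk around each vertex, so that a single region near $v$ is shared by the three incident $M_e$.
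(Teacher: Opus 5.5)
\textbf{Verdict: the overall route matches the paper, but there is a genuine gap in the step you single out as key.} Your reduction follows the paper step for step: the six-slope drawing, one plumbing $\gamma_m$ per slope sharing the anchors $\mu,\nu$, deleting the $\beta$ strands of the slopes $\phi_i$, setting $k'=k+T$ with $T$ the number of teeth, and both directions of the equivalence via Corollary \ref{cor:pinning-simple-mobidiscs}. The gap is the global control of innermost bigons.

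\textbf{Outside $\mu$.} Your argument there is not valid. The criterion ``no disc bounded by two such arcs is free of other strands'' is the wrong one: an innermost bigon may be crossed by other strands, and every $M_e$ is. What you must exclude is a bigon containing no other bigon. Your per-slope perturbation of the radii anticipates that connecting arcs of one slope cross vertical segments of another slope outside $\mu$. If so, such bigons do occur. Take two vertical segments of $\gamma_{m'}$ joined by a connecting arc of $\gamma_{m'}$ at radius $r'$, both crossing a longer connecting arc of $\gamma_m$ at radius $r<r'$, with no other connecting arc at radii between $r$ and $r'$ over that stretch. They bound an innermost bigon that lies outside $\mu$, is disjoint from all $M_e$ and all teeth, and costs a pin unrelated to $G$.

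The paper rules this out by requiring that all intersections between distinct $\gamma_m,\gamma_{m'}$ lie in $\mathbb{B}(0,\frac12)$. Outside the ball the plumbings are then disjoint, Lemma \ref{lem:bigons_within_plumbing_subloop} applies slope by slope, and the teeth prevent an innermost bigon from crossing $\mu$. This is really a condition on angular sectors, not on $\epsilon$ alone. Outside $\mu$, each $\gamma_m$ lies in a double cone around direction $m$ whose aperture is set by how close the lines $L_e$ pass to the origin. Some $\phi_i$ may be close to another slope. So you should shrink $G$ until these cones are pairwise disjoint; note that scaling, not rotating, is what puts all pairwise intersections of the $L_e$ inside the small ball.

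\textbf{Inside the ball.} Vertex neighbourhoods are not a separate obstacle and need no pinching. Each maximal path is a polygonal line whose corner angles tend to $0$ with $\epsilon$. So for small $\epsilon$, two maximal paths of distinct slopes in $\mathbb{B}(0,\frac12)$ cross exactly once; this is the paper's angle condition. It excludes every inter-slope bigon there, near vertices included, and shows that strands of other slopes only subdivide each $M_e$.

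You also do not need a unique region shared by the three bigons at $v$, nor exactly two `vertex regions' per $M_e$. Extensions of collinear gadgets through $v$ may subdivide these intersections anyway. What both directions of the equivalence use is the deformation-retraction condition: $\bigcap_i M_{e_i}$ retracts onto $\bigcap_i e_i$. This gives $M_e\cap M_{e'}=\emptyset$ for non-adjacent edges, so a pin lying in two bigons determines their common vertex. It also gives $M_{e_1}\cap M_{e_2}\cap M_{e_3}\neq\emptyset$ at each vertex, so any region there serves as the vertex pin.

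\textbf{On the count $T$.} Counted on the final multiloop, your $T$ rightly includes the $3\cdot 8$ teeth of the two $\alpha$ parity-balancing segments kept for each $\phi_i$. The paper's $f$ sums over the three main slopes only and appears to leave these out.
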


\begin{proof}
    First note that that $\mathsf{SimplePin}(\Sigma,s)$ is in \textsf{NP}. Indeed, one may consult \cite{simon_stucky_2025pinningidealmultiloop} or appeal to Corollary \ref{cor:pinning-simple-easy}.

    To demonstrate that the problem is \textsf{NP}-hard, let $(G=(V,E),k)$ be an instance of \textsf{3C3PVC}, where $k\in \mathbb{Z}$. We will construct an instance $(\gamma,k')$ of $\mathsf{SimplePin}(\Sigma,s)$, where $k'=8\abs{E}+f+k$ for some $24\leq f\leq 72$ which depends on an embedding of $G$ in a way that will be described below. We will do this in polynomial time, and show that they are equivalent problems. An application of Theorem \ref{lem:pvc3c_hard} will finish the proof. 
    
    We may assume that $k>0$ and that $\Sigma=\R^2$ by Remark \ref{rmk:plane_enough}.
    
    \paragraph{Construct an embedding with six slopes.} From Lemma \ref{lem:embed_with_4_slopes}, we construct a straight-line embedding of $G$ in $\R^2$ in linear time in which the union of edge slopes is a $6$-element set $\{\frac{\pi}{4}, \frac{\pi}{2}, \frac{3\pi}{4},\phi_1,\phi_2,\phi_3\}$ for some $\phi_1,\phi_2,\phi_3\in [0,\pi)$, and each of $\phi_1$, $\phi_2$ and $\phi_3$ is realized exactly once by a single edge on the outer face of $G$.
    
    For each edge $e\in G$, let $L_e$ be the line containing $e$. Scale the embedding so that $G\subset \mathbb{B}(0,\frac{1}{2})$ and additionally that all intersection points between distinct lines $L_e$ and $L_{e'}$ are contained inside $\mathbb{B}(0,\frac{1}{2})$.
    
    \paragraph{Construct the multiloop via bundle plumbings.} Next, we apply the construction in Section \ref{sec:plumbing} once for each slope $m\in\{\frac{\pi}{4}, \frac{\pi}{2}, \frac{3\pi}{4},\phi_1,\phi_2,\phi_3\}$ to build a simple multiloop $\gamma$ with $4*6+2=26$ strands, choosing the same $\epsilon>0$ for all of them. We choose $\epsilon$ small enough so that all intersection points between distinct plumbing sub-multiloops $\gamma_m$ and $\gamma_{m'}$ lie inside $\Ball(0,\frac{1}{2})$. Additionally, we claim that we can compute $\epsilon$ in polynomial time small enough so that the following properties hold. Proofs will follow.
    
    \begin{enumerate}[leftmargin=0.5cm]
        \item         \label{cond:deformation_retraction} \textbf{Deformation retraction condition.} Let $M_e$ denote the bigon around edge $e$. For all $\{e_i\}_{i=1}^n\subset E$,
        \begin{equation*} 
            \bigcap_{i=1}^n M_{e_i} 
            \quad
            \text{retracts by deformation onto} 
            \quad
            \bigcap_{i=1}^n e_{i}.
        \end{equation*}

        This property ensures that the topology of gadget bigons mimics the topology of $G$.
        
        \item \label{cond:angle_condition} \textbf{Angle condition.} Let $\sigma_m,\sigma_{m'}\in\{\alpha_m^+,\alpha_m^-,\beta_m^+,\beta_m^-\}$ be any two strands associated to distinct slopes $m\neq m'$. Let $\lambda_m$ and $\lambda_{m'}$ be any two maximal paths in $\sigma_m\cap\mathbb{B}(0,\frac{1}{2})$ and $\sigma_{m'}\cap\mathbb{B}(0,\frac{1}{2})$, respectively. Then $\lambda_m\cap\lambda_{m'}$ is a single point.
        
        This property ensures that all intersections of $\gamma$ are transverse multiple points, and also helps control the mobidisc structure. 
    \end{enumerate}

    \emph{Proofs of claims.}
     \begin{enumerate}[leftmargin=0.5cm]
         \item \textbf{Proof of deformation retraction condition.} To achieve this, we must control both vertical and horizontal dimensions of gadget bigons.
        \begin{itemize}[leftmargin=0cm,noitemsep]       
        \item \textbf{Vertical separation.} For an edge $e$ of $G$, let $\ell(e)$ denote the length of the edge. We choose $\epsilon < \frac{1}{5}\min_{e\in E}\ell(e)$. 
        
        \item \textbf{Horizontal separation.} Consider the set of edges with slope $m$. These edges are partitioned into a set $\mathcal{C}_m$ of collinearity classes which define parallel lines. For distinct collinearity classes $C,C'\in\mathcal{C}_m$, let $\#C$ denote the number of edges in the collinearity class and let $d(C,C')$ denote the distance between these lines. Since the angles in Figure \ref{fig:edgegadget} tend to $0$ with $\epsilon$, we may choose $\epsilon$ small enough so that denoting $d_m= \min\{d(C,C') \colon  C,C'\in\mathcal{C}_m, C\neq C'\}$ and $c_m= 5\times\max\{\#C\colon C\in\mathcal{C}_m\}$ we have
        \begin{equation*}
            \epsilon\tan \epsilon < \min\left\{d_m/c_m \colon m\in\{\pi/4, \pi/2, 3\pi/4,\phi_1,\phi_2,\phi_3\}\right\}.
        \end{equation*}
        
        \end{itemize}
         \item \textbf{Proof of angle condition.}
         The paths $\lambda_m$ and $\lambda_{m'}$ intersect at least once since their endpoints are linked on $\mathbb{B}(0,\frac{1}{2})$. Since the angles in Figure \ref{fig:edgegadget} tend to $0$ with $\epsilon$, we can make all segments of $\lambda_m$ and $\lambda_{m'}$ arbitrarily close to slopes $m$ and $m'$, respectively, so that $\lambda_m$ and $\lambda_{m'}$ intersect transversely exactly once (for instance, by making $\max_{e\in E}\max_{i\in\{1,2,3,4\}}\theta_i<\frac{1}{3}\min\{\abs{m-m'}\colon m\neq m'\in\{\frac{\pi}{4}, \frac{\pi}{2}, \frac{3\pi}{4},\phi_1,\phi_2,\phi_3\}\}$). 
     \end{enumerate}
 
     \paragraph{Resolve multiple points.} Now we deal with isolated multiple points. We compute all intersection points of the multiloop in polynomial time, as well as the minimum distance $\epsilon'$ between them, and arbitrarily resolve multiple points locally (inside balls of radius $\epsilon'/2$, say) into sets of transverse double points. 
    
     \paragraph{Prune extra strands.} We now have a multiloop with exactly $26$ strands. To reach $s\geq 20$ strands, we proceed as follows. Since the three slopes $\phi_1$, $\phi_2$, and $\phi_3$ correspond to a unique edge on the outer face of $G$, we have $n_\alpha=1$ and $n_\beta=0$ for these slopes. In particular there are guaranteed to be no $\beta$ bigons for these slopes. We now remove the $\beta$ strands corresponding to these edges and the tooth bigons between $\mu$ and $\nu$ associated to them. This yields a multiloops with exactly $20$ strands.
     Now we add $s-20$ pairwise disjoint closed loops away from $\gamma$ and redefine $\gamma$ to be the resulting multiloop.     

     \paragraph{Compute the number of forced pins.} Recall that every edge of $G$ gives rise to $8$ bigons between $\mu$ and $\nu$, so that we will need to add $8\abs{E}$ to our potential pinning set of $\gamma$. 
     
     We now define the number $f$, which corresponds to the number of forced pins induced by the parity balancing procedure. Recall that for each slope $m\in\{\frac{\pi}{4}, \frac{\pi}{2}, \frac{3\pi}{4}\}$, there are $4$ ways that the parity balancing procedure may have been performed for the slope $m$ plumbing subloop. We count the number of extra bigons between $\mu$ and $\nu$ in each case, which is $4$ times the number of extra vertical arcs needed on the leftmost side of the plumbing construction.
     
     \begin{itemize}[noitemsep]
         \item If $n_\alpha$ is odd and $n_\beta$ is even, there are $6$ extra vertical arcs yielding $f_m=24$ extra bigons between $\mu$ and $\nu$ (see Figure \ref{fig:alpha_odd_beta_even}).
         \item If $n_\alpha$ is odd and $n_\beta$ is odd, there are $4$ extra vertical arcs yielding $f_m=16$ extra bigons between $\mu$ and $\nu$ (see Figure \ref{fig:alpha_odd_beta_odd}).
         \item If $n_\alpha$ is even and $n_\beta$ is odd, there are $2$ extra vertical arcs yielding $f_m=8$ extra bigons between $\mu$ and $\nu$ (see Figure \ref{fig:alpha_even_beta_odd}).
         \item If $n_\alpha$ is even and $n_\beta$ is even, there are $4$ extra vertical arcs yielding $f_m=16$ extra bigons between $\mu$ and $\nu$ (see Figure \ref{fig:alpha_even_beta_even}).
     \end{itemize}

    Now let $f=f_\frac{\pi}{4}+f_\frac{\pi}{2}+ f_\frac{3\pi}{4}$. Note that $f\equiv0\mod 8$ and $24\leq f\leq 72$.

    \paragraph{Define the problem instance and bound its size in memory.}
    Finally, let $k'=8\abs{E}+f+k$. This completes the polynomial-time construction of the $\mathsf{SimplePin}(\R^2,s)$ problem instance, $(\gamma,k')$.   
    
    Note that the size of $\gamma$ in memory is quadratic in the size of $G$. Indeed, $\gamma$ may be viewed as a union of  $2\abs{E}+\frac{f}{4}$  paths almost parallel to edges of $G$, plus $\abs{E}+\frac{f}{8}$ connecting arcs. They all intersect each other pairwise no more than $2$ times each, and each of the $2\abs{E}+\frac{f}{4}$ paths almost parallel to edges of $G$ gives rise to $4$ double points involving $\mu$ and $\nu$. Thus a crude bound is
    
    \[\#{\gamma}\leq 4(2\abs{E}+f/4)+2\binom{(2\abs{E}+f/4)+(\abs{E}+f/8)}{2}.\]

    \paragraph{Argue that the problems are equivalent.} It remains to argue that the graph $G$ has a vertex cover of size at most $k$ if and only if the simple multiloop $\gamma$ has a pinning set of size at most $k'$.    
    
    \paragraph{From pinning set to vertex cover.} Suppose $\gamma$ has a pinning set of size at most $k'$, and choose one such. Note that each of the $4\abs{E}+\frac{f}{2}$ regional teeth bigons must have a pin, and another $4\abs{E}+\frac{f}{2}$ pins are used for subdivided teeth. This accounts for at least $8\abs{E}+f$ pins. We construct a map from the remaining $\le k$ pins to vertices of $G$ as follows: If the pin is in no $M_e$, then choose any vertex; if the pin is only in one $M_e$, then choose either vertex associated to that bigon; if it is in more than one $M_e$, these intersect in a nonempty region containing a unique vertex of $G$ by the \textbf{deformation retraction condition} \eqref{cond:deformation_retraction}, which is the chosen vertex. Since every $M_e$ contains at least one pin by Corollary \ref{cor:pinning-simple-mobidiscs}, this yields a vertex cover of the graph using at most $k$ vertices.

     \paragraph{From vertex cover to pinning set.} Suppose the graph $G$ has a vertex cover of size at most $k$, and choose one such. We claim that every innermost bigon of $\gamma$ is either one of the bigons between $\mu$ and $\nu$, or one of the $M_e$ bigons. Indeed, Lemma \ref{lem:bigons_within_plumbing_subloop} implies that the only innermost bigons within each plumbing subloop are of this form, and since the slopes of strands of each plumbing subloop $\gamma_m$ are very close to $m$ within $\Ball(0,\frac{1}{2})$, the \textbf{angle condition} (\ref{cond:angle_condition}) and abundance of regional teeth bigons imply that there are no innermost bigons using two strands from two different plumbing subloops. Now pin $\gamma$ as follows. Each of the $8\abs{E}+f$ bigons between $\mu$ and $\nu$ receives a pin (choosing arbitrarily for subdivided teeth). For each vertex $v$ of the cover, we place a pin in the region $M_{e_1}\cap M_{e_2}\cap M_{e_3}$ where $e_1$, $e_2$ and $e_3$ are the $3$ edges connected to $v$. This region is nonempty and simply connected by the \textbf{deformation retraction condition} \eqref{cond:deformation_retraction}. Since we started with a vertex cover, all $M_e$ bigons are pinned with at most $k$ pins, so that $\gamma$ is pinned with at most $k'$ pins by Corollary \ref{cor:pinning-simple-mobidiscs}. \end{proof}   

     \subsubsection{Concluding remarks}
     \label{sec:conclusion}

     Recall the question raised by Theorems \ref{thm:3-easy} and \ref{thm:20-hard}: For a fixed surface $\Sigma$, what is the hardness threshold $s_{\textsf{mP}}=\max\{s\in\N\colon \mathsf{SimplePin}(\Sigma,s) \in \mathsf{P}\}$? Our theorems show (assuming \textsf{P}$\ne$\textsf{NP}) that $3\leq s_{\textsf{mP}}\leq 19$ and we conjecture  that $s_{\textsf{mP}}=3$ in Section \ref{sec:4-strands}.

     We conclude with a comment on the possibility of lowering the upper bound on $s_{\textsf{mP}}$ by lowering the value of $s$ appearing in the statement of Theorem \ref{thm:20-hard}.

     \begin{remark}[improving the upper bound on the number of strands] As mentioned in the introduction, the proof of Theorem \ref{thm:20-hard} uses $20$ strands for the following reason. For a $6$ slope drawing of a $3$-connected cubic planar graph $G$, it uses $4$ strands for each slope in $\{\frac{\pi}{4}, \frac{\pi}{2}, \frac{3\pi}{4}\}$, $2$ strands for each of the slopes among $\{\phi_1,\phi_2,\phi_e\}$ occurring exactly once on the outside face, and $2$ additional strands around the unit circle to anchor the construction. It is probably possible to use similar ideas as those we presented (at the cost of a more delicate construction or proof) to reduce the number of strands as low as $s\geq12$ (using only the $12$ strands corresponding to the slopes $\{\frac{\pi}{4}, \frac{\pi}{2}, \frac{3\pi}{4}\}$), thereby showing $s_{\textsf{mP}}\leq 11$. This is probably as far as one could hope to push the bound without significantly modifying our approach. Note that Uehara's construction in the proof of Lemma \ref{lem:pvc3c_hard} (originally given in \cite{uehara_cubic_NP_complete_96}) may be used to produce a $3$-connected cubic planar graph which also has girth at least $4$ (namely is \emph{triangle-free}); this additional restriction may be helpful for such endeavors.
\end{remark}

\section*{Acknowledgments}

We wish to thank David Eppstein for helpful comments about Lemma \ref{lem:embed_with_4_slopes}, and Ryuhei Uehara for permission to reproduce Figure \ref{fig:uehara_gadget} which first appeared in \cite{uehara_cubic_NP_complete_96}.

\newpage
\bibliographystyle{alpha}
\bibliography{paratex/bib}

\end{document}